\newcommand\reallywidehat[1]{%
\savestack{\tmpbox}{\stretchto{%
  \scaleto{%
    \scalerel*[\widthof{\ensuremath{#1}}]{\kern-.6pt\bigwedge\kern-.6pt}%
    {\rule[-\textheight/2]{1ex}{\textheight}}
  }{\textheight}%
}{0.5ex}}%
\stackon[1pt]{#1}{\tmpbox}%
}
\newcommand\reallywidecheck[1]{%
\savestack{\tmpbox}{\stretchto{%
  \scaleto{%
    \scalerel*[\widthof{\ensuremath{#1}}]{\kern-.6pt\bigwedge\kern-.6pt}%
    {\rule[-\textheight/2]{1ex}{\textheight}}
  }{\textheight}%
}{0.5ex}}%
\stackon[1pt]{#1}{\scalebox{-1}{\tmpbox}}%
}
\numberwithin{equation}{section}
\newcommand{\supp}{\mbox{\rm supp}}
\newcommand{\RR}{{\mathbb R}}
\newcommand{\ZZ}{{\mathbb Z}}
\newcommand{\CC}{{\mathbb C}}
\newcommand{\TT}{\mathbb T}
\newcommand{\NN}{\mathbb N}
\newcommand{\M}{{\mathcal M}}
\newcommand{\cM}{{\mathcal M}}
\newcommand{\cA}{{\mathcal A}}
\newcommand{\cB}{{\mathcal B}}
\newcommand{\cC}{{\mathcal C}}
\newcommand{\cH}{{\mathcal H}}
\newcommand{\dd}{\mbox{d}}
\newcommand{\Cu}{C_{\mathsf{u}}}
\newcommand{\Cc}{C_{\mathsf{c}}}
\newcommand{\eps}{\varepsilon}
\newcommand{\WAP}{\mathcal{WAP}}
 \newtheorem{theorem}{Theorem}[section]
 \newtheorem{lemma}[theorem]{Lemma}
 \newtheorem{proposition}[theorem]{Proposition}
 \newtheorem{corollary}[theorem]{Corollary}
 \newtheorem{definition}[theorem]{Definition}
 \newtheorem{example}[theorem]{Example}
  \newtheorem{remark}[theorem]{Remark}
    \newtheorem{question}[theorem]{Question}
\newcommand{\lb}{\text{\textlquill} }
\newcommand{\rb}{\text{\textrquill} }
\begin{document}
\title[Diffraction and orthogonality]{Diffraction as a unitary representation and the orthogonality of measures with respect to the reflected Eberlein convolution}

\dedicatory{ We dedicate this work to Michael Baake on the occasion of his 65\textsuperscript{th} birthday.}

\author{Daniel Lenz}
\address{Mathematisches Institut, Friedrich Schiller Universit\"at Jena, 07743 Jena, Germany}
\email{daniel.lenz@uni-jena.de}
\urladdr{http://www.analysis-lenz.uni-jena.de}

\author{Nicolae Strungaru}
\address{Department of Mathematical Sciences, MacEwan University \\
10700 -- 104 Avenue, Edmonton, AB, T5J 4S2, Canada\\
and \\
Institute of Mathematics ``Simon Stoilow''\\
Bucharest, Romania}
\email{strungarun@macewan.ca}
\urladdr{https://sites.google.com/macewan.ca/nicolae-strungaru/home}

\begin{abstract}
We discuss how the diffraction theory of a single
translation bounded measure or a family of such measures can be
understood within the framework of unitary group representations. This allows us to prove an orthogonality feature of measures whose
diffractions are mutually singular. We apply this to study dynamical systems, the refined Eberlein decomposition and validity of a Bombieri--Taylor type result in a rather general context. Along the way we also use our approach to  (re)prove various characterisations of pure point diffraction.
\end{abstract}

\maketitle

\section*{Introduction}
This article is concerned with mathematical diffraction theory.  A core object in mathematical diffraction theory is the autocorrelation of a measure. This autocorrelation is an averaged quantity.  The theory can be developed on arbitrary locally compact Abelian groups and this is how we will proceed below. In the case of the integers the autocorrelation deals with means of the form
\[
\lim_N \frac{1}{N}\sum_{k=1}^N f(k) \overline{f(k-j)} = :\gamma_f (j)
\]
for a bounded function $f$ on $\ZZ$ and $j\in\ZZ$.

As was observed recently, mathematical diffraction theory can conveniently be phrased with the help of the reflected Eberlein convolution \cite{LSS3}. Specifically, the autocorrelation of a measure is the  reflected Eberlein convolution of the measure and itself.  As was also noted in \cite{LSS3}, this  reflected Eberlein convolution provides a certain inner product like structure to the space of measures.

The starting point of this article is the  realization  that the reflected Eberlein convolution is not only somewhat similar to an inner product,  but that one can rather construct a proper Hilbert space together with a unitary representation of the underlying group out of the reflected  Eberlein convolution. This allows us to study orthogonality with respect to the reflected Eberlein convolution. Our main result  gives orthogonality of measures when their diffractions are mutually singular.  Having established the  unitary  representation we obtain this  result rather directly from Stone theorem. For bounded functions $f,g$ on the integers the result gives that
\[
\lim_N \frac{1}{N}\sum_{k=1}^N f(k) \overline{g(k)} = 0
\]
must necessarily hold whenever the diffraction measures $\widehat{\gamma_f}$ of  $f$ and $\widehat{\gamma_g}$ of $g$ are mutually singular.

The main result a allows for a  variety of  applications. One
application concerns what is sometimes called the Bombieri--Taylor
conjecture. Another application concerns orthogonality of dynamical
dynamical systems $(X,G,m)$ and $(X',G,m')$. If for such systems   the spectral measures of  $f \in C(X)$ and $f'\in C(X')$ are mutually singular, then the functions $t \to f(t x)$ and $t\mapsto f'(t x')$
are orthogonal with respect to the reflected Eberlein convolution
for almost surely all $x\in X$ and $x'\in X'$.  A third application concerns what is known as refined Eberlein decomposition.  Besides these applications of the orthogonality result we can also use the underlying unitary representation to (re)prove various characterizations of pure point diffraction. This gives  in particular a new and unifying perspective on  results achieved during the last twenty years.

The article is organized as follows: In Section
\ref{section-setting} we present the basic setting of locally
compact Abelian groups and the associated notions needed in the
remaining part of the article. Section \ref{section-eberlein} then
features the reflected Eberlein convolution and its basic
properties. The reflected Eberlein convolution is defined via a limit and we discuss existence of this limit in  Section \ref{section-existence}.
The construction of the unitary representation, first
out of functions, and, then out of measures admitting a reflected
Eberlein convolution is done in Section \ref{section-construction}.
The mentioned characterisation of pure point diffractions is then derived in Section
\ref{section-main}, while our main result on orthogonality is proven in Section~\ref{section-main}.
The subsequent three sections then
discuss the mentioned applications to Bombieri--Taylor conjecture and
to dynamical systems as well as an application to the refined
Eberlein decomposition. The final section is devoted to the
reflected Eberlein convolution with a Besicovitch almost periodic
measure. We characterize in particular those measures which have
vanishing reflected Eberlein convolution with all Besicovitch almost
periodic measures.

\section{The setting}\label{section-setting}
Throughout this paper $G$ is a  locally compact Abelian group
(LCAG).  The group operations are written additively and
the neutral element is denoted by $0$.  Integration of $f$ with
respect to Haar measure is denoted by $\int_G f \dd s$. The Haar
measure of a measurable subset $A$ of $G$ is also denoted by $|A|$.

We denote by $\Cu(G)$ the space of uniformly
continuous bounded functions on $G$ and by $\Cc(G)$ the subspace of
$\Cu(G)$ consisting of compactly supported continuous functions.
The space $\Cu(G) $ is equipped with the supremum norm $\|\cdot\|_\infty$ defined by $\|f\|_\infty :=\sup\{|f(t)| : t\in G\}$. The support $\supp (\varphi)$ of $\varphi \in\Cc (G)$ is the smallest compact set outside of which $\varphi$ vanishes.
The spaces $\Cu(G)$ and $\Cc (G)$
admit  the following operators
\[
f^\dagger(x)=f(-x) \mbox{ and } \tilde{f}(x)= \overline{f(-x)} \,.
\]
Moreover, each $t \in G$ induces a translation operator on these spaces via
\[
\tau_t f(x)=f(x-t) \,.
\]

A \emph{Radon measure} on  $G$  is a linear functional $\mu : \Cc(G)
\to \CC$ with the property that for each compact set $K \subseteq G$
there exists  $C_K\geq 0$ such that all functions $\varphi \in
\Cc(G)$ whose support is contained in $K$ satisfy
\[
\left| \mu(\varphi) \right| \leq C_K \| \varphi \|_\infty.
\]
We will often write $\int_{G} \varphi(t) \dd \mu(t):= \mu(\varphi)$.

The operators ${ \, }^\dagger, \tilde{ \, }, \tau_t$ extend
naturally to measures via
\[
\mu^\dagger(\varphi)= \mu(\varphi^\dagger) \,;\, \tilde{\mu}(\varphi) = \overline{\mu(\tilde{\varphi})} \,;\,(\tau_t \mu)(\varphi)= \mu(\tau_{-t} \varphi) \,.
\]

To any Radon measure $\mu$ there exists a unique positive measure
$\nu$ and a measurable $h : G\longrightarrow \CC$ with $|h| =1$
such that
\[
\mu (\varphi) = \int \varphi h \dd \nu
\]
holds  for all $\varphi \in\Cc (G)$ \cite[Thm.~6.5.6]{Ped}. The measure $\nu$ is called
the \textit{total variation} of $\mu$ and henceforth denoted by $|\mu|$. The measure $\mu$ is called
finite if $|\mu|(G)<\infty$ holds.

Whenever $A$ is a Borel subset of $G$ we define the restriction $\mu|_A$ of $\mu$ to
$A$ to be the measure satisfying
\[
\mu|_A (\varphi) :=\int_A \varphi h \dd|\mu|
\]
for all $\varphi \in\Cc (G)$.

\smallskip

The dual group $\widehat{G}$ of $G$ is the set of all continuous group homomorphisms $\xi :  G\longrightarrow \TT$. Here, $\TT$ is the group of  complex numbers with modulus $1$ (equipped with multiplication).  The dual group $\widehat{G}$  is a locally compact Abelian group in a natural way. The \textit{Fourier transform} $\widehat{f}$ of $f \in L^1(G)$ is the function on $\widehat{G}$ with
\[
\widehat{f}(\xi) = \int_{G} \overline{\xi(t)} f (t) \dd t.
\]
Moreover, whenever $\sigma$ is a finite positive measure on
$\widehat{G}$, we can define the inverse Fourier transform
$\widecheck{\sigma}$ of $\sigma$ as the function on $G$ given by
\[\widecheck{\sigma}(t) = \int_{\hat{G}} \xi(t)  \dd\sigma(\xi) \,.\]

\medskip

The convolution $\varphi \ast\psi$ of $\varphi, \psi\in\Cc (G)$ is
the function on $G$ defined by
\[
\varphi \ast \psi (t) := \int_{ G} \varphi (s) \psi (t-s) \dd s \,.
\]
The convolution  $\mu\ast \varphi$ between  a
measure $\mu$ and a function $\varphi \in \Cc(G)$ is the function on
$G$ defined by
\[
\mu\ast \varphi (t) = \int_{G} \varphi(t-s) \dd \mu(s) = \mu( (\tau_t \varphi)^\dagger)\,.
\]
It is easy to see that
\[
\tau_t ( \mu\ast \varphi) =(\mu \ast \tau_t \varphi)= (\tau_t \mu)
\ast \varphi  \,
\]
for all $t\in G$, $\varphi \in \Cc(G)$ and any measure $\mu$.

The convolution $\mu\ast \nu$ between two finite measures is the
measure given by
\[
\mu \ast\nu (\varphi) = \int_{G} \int_{ G} \varphi (s+t) \dd\mu(s) \dd\nu
(t) \qquad \forall \varphi \in \Cc(G) \,.
\]

A measure $\gamma$ is called \textit{positive definite} if
\[
\gamma\ast \varphi \ast \widetilde{\varphi} (0)\geq 0
\]
holds for all $\varphi \in\Cc (G)$. This is equivalent to $\gamma\ast \varphi \ast \widetilde{\varphi}$ being a continuous positive definite function for all $\varphi \in \Cc(G)$ \cite{BF}.

Every positive definite measure $\gamma$ admits a (unique) positive  measure $\widehat{\gamma}$ on $\widehat{G}$ such that
\[
\gamma\ast \varphi \ast \widetilde{\varphi}(0) = \int_{\widehat{G}} |\widehat{\varphi}(\xi) |^2 \dd\widehat{\gamma}(\xi)
\]
for all  $\varphi \in \Cc (G)$. The measure $\widehat{\gamma}$ is called the \textit{Fourier transform} of $\gamma$ (see
\cite{ARMA1,BF,MoSt} for details). From the definition and polarisation  we easily find
\[
\gamma\ast \varphi \ast \widetilde{\psi}(0) = \int_{\widehat{G}} \widehat{\varphi}(\xi) \overline{\widehat{\psi}(\xi)} \dd\widehat{\gamma}(\xi)
\]
for all $\varphi,\psi\in\Cc (G)$. Applying this with $\psi = \tau_{-t} \varphi$ we obtain the following statement.

\begin{proposition}\label{char-ft-gamma} Let $ \gamma$ be a positive definite measure on $G$. Then,
\[
\gamma  \ast \varphi \ast \widetilde{\varphi}(t) =\int_{\widehat{G}} \xi (t) |\widehat{\varphi}|^2 (\xi) \dd \widehat{\gamma}(\xi)
\]
holds for all $t\in G$ and $\varphi \in\Cc (G)$. \qed
\end{proposition}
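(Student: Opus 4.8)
The plan is to read the statement off the polarised identity
\[
\gamma\ast\varphi\ast\widetilde{\psi}(0) \;=\; \int_{\widehat{G}} \widehat{\varphi}(\xi)\,\overline{\widehat{\psi}(\xi)}\,\dd\widehat{\gamma}(\xi),
\]
recorded just above (and valid for all $\varphi,\psi\in\Cc(G)$), by specialising $\psi=\tau_{-t}\varphi$ and then bookkeeping the sign conventions in $\,\widetilde{\cdot}\,$, $\tau_s$ and the Fourier transform. There is no genuine obstacle here; the whole argument is a routine unwinding of definitions, and the only places that call for a little care are the sign of the translation appearing in $\widetilde{\tau_{-t}\varphi}$ and the identity $\overline{\xi(-t)}=\xi(t)$, which holds because $\xi$ takes values in $\TT$.

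First I would rewrite the left-hand side. A direct computation gives $\widetilde{\tau_{-t}\varphi}=\tau_{t}\widetilde{\varphi}$, since $\widetilde{\tau_{-t}\varphi}(x)=\overline{\varphi(-x+t)}=\widetilde{\varphi}(x-t)$. Using that convolution of functions commutes with translation, i.e. $((\gamma\ast\varphi)\ast\tau_{t}h)(x)=((\gamma\ast\varphi)\ast h)(x-t)$ for $h\in\Cc(G)$ (a one-line change of variables), and evaluating at $x=0$, we obtain
\[
\gamma\ast\varphi\ast\widetilde{\tau_{-t}\varphi}(0)=\big(\gamma\ast\varphi\ast\widetilde{\varphi}\big)(-t).
\]
Next I would compute the right-hand side. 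From the definition of the Fourier transform and the substitution $y=x+t$ one gets $\widehat{\tau_{-t}\varphi}(\xi)=\int_G \overline{\xi(x)}\,\varphi(x+t)\,\dd x=\overline{\xi(-t)}\,\widehat{\varphi}(\xi)=\xi(t)\,\widehat{\varphi}(\xi)$, so that $\overline{\widehat{\tau_{-t}\varphi}(\xi)}=\overline{\xi(t)}\,\overline{\widehat{\varphi}(\xi)}$ and therefore
\[
\int_{\widehat{G}}\widehat{\varphi}(\xi)\,\overline{\widehat{\tau_{-t}\varphi}(\xi)}\,\dd\widehat{\gamma}(\xi)=\int_{\widehat{G}}\overline{\xi(t)}\,|\widehat{\varphi}(\xi)|^{2}\,\dd\widehat{\gamma}(\xi).
\]

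Combining the last two displays with the polarised identity yields $\big(\gamma\ast\varphi\ast\widetilde{\varphi}\big)(-t)=\int_{\widehat{G}}\overline{\xi(t)}\,|\widehat{\varphi}(\xi)|^{2}\,\dd\widehat{\gamma}(\xi)$ for every $t\in G$. Replacing $t$ by $-t$ and using $\overline{\xi(-t)}=\xi(t)$ gives precisely the asserted formula. (Equivalently, one could specialise $\psi=\tau_{t}\varphi$ from the outset, which produces the identity without the final relabelling.)
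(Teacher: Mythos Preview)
Your proof is correct and follows exactly the approach indicated in the paper: both substitute $\psi=\tau_{-t}\varphi$ into the polarised identity and unwind the definitions. Your write-up simply supplies the bookkeeping details (the identities $\widetilde{\tau_{-t}\varphi}=\tau_t\widetilde{\varphi}$ and $\widehat{\tau_{-t}\varphi}=\xi(t)\widehat{\varphi}$) that the paper leaves implicit.
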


A measure $\mu$ is called \emph{translation bounded} if for one
(any) non-empty open $V$ with compact closure
\[
\|\mu\|_{V}:=\sup_{t\in G} |\mu| (t + V) <\infty
\]
holds. This is equivalent to  $\mu\ast \varphi$ being  bounded for
all $\varphi \in\Cc (G)$ \cite{ARMA1}. Note that for a translation bounded $\mu$
the function $\mu \ast\varphi$  belongs to $\Cu (G)$ \cite{ARMA1} (compare
appendix for further discussion of translation bounded measures).

\smallskip

Let us complete the section by briefly discussing $\sigma$-compactness, metrisability and second countability of $G$.

\begin{remark}[Second countable LCAG]
\begin{itemize}
  \item[(a)] If $X$ is any locally compact Hausdorff space, and $B$ is any basis of open sets for the topology, then the set
  \[
  B_{c}:=\{ U \in B : \bar{U} \mbox{ is compact } \}
  \]
  is also a basis of open sets for the topology. This immediately implies that any second countable locally compact Hausdorff space is $\sigma$-compact.
  \item[(b)] Any locally compact group is a normal topological space. Therefore, by the Urysohn Metrisation Theorem, any second countable LCAG is metrisable.
  \item[(c)] If $G$ is a metrisable LCAG and $K \subseteq G$ is compact, it is easy to show that there exists a countable dense set $D \subseteq G$.
  In particular, any $\sigma$-compact and metrisable group has a countable dense subset. This combined with metrisability gives that any $\sigma$-compact and metrisable LCAG is
  second countable.
  \item[(d)] Points (a-c) imply that a LCAG $G$ is second countable if and only if $G$ is $\sigma$-compact and metrisable.
  \item[(e)] By \cite[Thm.~4.2.7]{ReiSte} and Pontryagin duality (see \cite[Thm.~4.2.11]{ReiSte}) we have the following equivalences:

  \begin{itemize}
    \item{} $G$ is $\sigma$-compact if and only if $\widehat{G}$ is metrisable.
    \item{} $G$ is metrisable if and only if $\widehat{G}$ is $\sigma$-compact.
  \end{itemize}

\item[(f)] By (d) and (e)   the group  $G$ is second countable if and only if $\widehat{G}$ is second countable.

\end{itemize}
\end{remark}

\section{The reflected Eberlein convolution}\label{section-eberlein}
In this section we discuss the reflected Eberlein convolution. This discussion is essentially taken from \cite{LSS3}. However, as \cite{LSS3} makes the additional assumptions that the group is $\sigma$-compact with countable basis of topology some (small) adjustments are necessary. In order to ease the approach for the reader we  present a rather complete treatment in this section.

Let $A$ be a subset of $G$. Then, for
 each compact set $K \subseteq G$  the \textit{$K$-boundary $\partial^{K} A$} of  $A$ is defined as
\[
\partial^{K} A := \bigl( \overline{A+K}\setminus A\bigr) \cup
\bigl((\left(G \backslash A\right) - K)\cap \overline{A}  \bigr)
\]

A net $(A_i)_{i \in I}$ of open  subsets of $G$ with compact closure is called
a \textit{van Hove net} if
\[
\lim_{i } \frac{|\partial^{K} A_{i}|}{|A_{i}|}  =  0
     \]
holds for any compact $K\subset G$. Note that every LCAG admits a
van Hove net (see for example \cite[Prop.~5.10]{PRS}).

With the help of a van Hove net we can define an averaged version of
the convolution as follows.

\begin{definition}[Reflected Eberlein convolution of measures]
Let $\cA$ be a van Hove net. Let translation bounded measures $\mu$
and $\nu$ be given.  If  the limit
\[
 \lim_{i} \frac{1}{|A_i|} (\mu|_{A_i})\ast\widetilde{(\nu|_{A_i})}
\]
exists in the vague topology it is called the \textit{reflected
Eberlein convolution} of $\mu$ and $\nu$ with respect to the van
Hove net  $\cA$ and denoted by $\lb \mu , \nu \rb_{\cA}$.
\end{definition}

Let us now list the basic properties of the reflected Eberlein
convolution of measures.

\begin{lemma}\label{thm:rebe props}
Let $\mu, \nu$  be translation bounded measures and let $\cA$ be a van Hove net. Then, the following assertions hold:
\begin{itemize}
  \item[(a)] There exists an index $i_0$ and a vaguely compact set $X \subseteq \cM^\infty(G)$ such
that, for all $i \geq i_0 $ we have  $\frac{1}{|A_i|}
(\mu|_{A_i})\ast\widetilde{(\nu|_{A_i})} \in X$. In particular, the net
$\frac{1}{|A_i|} (\mu|_{A_i})\ast\widetilde{(\nu|_{A_i})}$ always
admits a convergent subnet.
\item[(b)] Assume that $\lb \mu , \nu \rb_{\cA}$ exists. Then,
     $\lb \tau_t \mu , \nu \rb_{\cA}$ exists for all $t\in G$ and
\[
\lb \tau_t \mu , \nu \rb_{\cA}= \lb \mu , \tau_{-t}\nu \rb_{\cA} =\tau_t \lb \mu , \nu \rb_{\cA}
\]
    holds. In particular, $\lb \tau_t \mu,\tau_t\mu\rb_{\cA} = \lb\mu,\nu\rb_{\cA}$ holds.
  \item[(c)] Assume that $\lb \mu , \nu \rb_{\cA}$ exists. Then, $\lb \nu , \mu \rb_{\cA}
$ exists and  satisfies
\[
\lb \nu , \mu \rb_{\cA}=\widetilde{\lb \mu , \nu \rb_{\cA}} \,.
\]
  \item[(d)] Assume that $\lb \mu , \nu \rb_{\cA}$ and $\lb \sigma , \nu \rb_{\cA}$ exist. Then, for all $a,b \in \CC$  also  $\lb a\mu+b \sigma , \nu \rb_{\cA}$ exists and
  \[
  \lb a\mu+b \sigma , \nu \rb_{\cA}=a\lb \mu , \nu \rb_{\cA}+b\lb \sigma  , \nu \rb_{\cA}
  \]
holds.
  \item[(e)] If $\lb \mu , \mu \rb_{\cA}$ exists then it is positive definite.
\end{itemize}
\end{lemma}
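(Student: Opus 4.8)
The plan is to prove the five assertions essentially in order, treating (a) as the foundational statement that makes the rest meaningful, and then deducing (b)--(e) from the defining limit together with standard bilinearity and continuity properties of the convolution and of the maps ${}^\dagger$, $\tilde{\ }$, $\tau_t$.

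For (a), the key point is a uniform translation-boundedness estimate. Fix a relatively compact open $V$. For any bounded Borel sets $B,B'$ one checks that $(\mu|_B)\ast\widetilde{(\nu|_{B'})}$ is a finite measure and, testing against $\varphi\in\Cc(G)$ with $\supp\varphi\subseteq K$, estimate $|(\mu|_B)\ast\widetilde{(\nu|_{B'})}(\varphi)|$ by $\|\varphi\|_\infty$ times $|\mu|(B)$ times a local bound on $|\nu|$ over a $K$-neighbourhood, or symmetrically. Dividing by $|A_i|$ and using that $|\mu|_{A_i}|(t+V)\le \|\mu\|_V$ and $|A_i|\gtrsim |V|^{-1}$-many translates of $V$ are needed to cover $A_i$, one gets a bound on $\frac{1}{|A_i|}|(\mu|_{A_i})\ast\widetilde{(\nu|_{A_i})}|(t+V)$ that is uniform in $t$ and in $i$ (for $i$ large, so that $|A_i|$ is bounded below). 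Hence for $i\ge i_0$ these measures all lie in a fixed norm-bounded set of $\cM^\infty(G)$, which is vaguely compact by the standard compactness theorem for translation bounded measures; this gives both the set $X$ and, via vague compactness, a convergent subnet. I expect this estimate to be the main obstacle: one must be a little careful about which of the two factors carries the ``mass'' bound and which carries the ``local'' bound, and about the van Hove property entering only to control $|A_i|$ from below (which it does since $|A_i|\to\infty$ along any van Hove net in a non-compact $G$, and the compact case is trivial).

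For (b), write $(\tau_t\mu)|_{A_i}=\tau_t(\mu|_{A_i-t})$ (up to an error supported in the symmetric difference of $A_i$ and $A_i+t$, whose relative Haar measure tends to $0$ by the van Hove property applied to $K=\{0,t\}$ or $\{-t\}$), use $\tau_t\alpha\ast\beta=\tau_t(\alpha\ast\beta)$ and $\alpha\ast\widetilde{\tau_{-t}\beta}=\tau_t(\alpha\ast\widetilde\beta)$, and pass to the limit using vague continuity of $\tau_t$; the last identity follows from the first two with $t$ replaced by $0$ after relabelling. For (c), note $\widetilde{(\alpha\ast\widetilde\beta)}=\beta\ast\widetilde\alpha$, so the $i$-th term for $\lb\nu,\mu\rb_\cA$ is the $\tilde{\ }$-image of the $i$-th term for $\lb\mu,\nu\rb_\cA$; since $\tilde{\ }$ is vaguely continuous, the limit exists and equals $\widetilde{\lb\mu,\nu\rb_\cA}$. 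For (d), the map $\alpha\mapsto (a\mu+b\sigma)|_{A_i}$ is just $a\,\mu|_{A_i}+b\,\sigma|_{A_i}$, convolution with $\widetilde{(\nu|_{A_i})}$ is linear, and division by $|A_i|$ and the vague limit are linear, so the claim is immediate.

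Finally, for (e): each approximant $\frac{1}{|A_i|}(\mu|_{A_i})\ast\widetilde{(\mu|_{A_i})}$ is of the form $\eta\ast\widetilde\eta$ with $\eta=\frac{1}{|A_i|^{1/2}}\mu|_{A_i}$ a finite measure, hence is positive definite: indeed for $\varphi\in\Cc(G)$, $(\eta\ast\widetilde\eta)\ast\varphi\ast\widetilde\varphi(0)=(\eta\ast\varphi)\ast\widetilde{(\eta\ast\varphi)}(0)=\|\eta\ast\varphi\|_2^2\ge 0$ using associativity and commutativity of convolution together with $\widetilde{\alpha\ast\beta}=\widetilde\alpha\ast\widetilde\beta$. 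Positive definiteness is preserved under vague limits (the condition $\gamma\ast\varphi\ast\widetilde\varphi(0)\ge 0$ is, for fixed $\varphi$, a vaguely closed condition, since $\gamma\mapsto\gamma\ast\varphi\ast\widetilde\varphi(0)=\gamma(\varphi\ast\widetilde{\varphi}^{\,\dagger})$ — up to the usual reflection bookkeeping — is vaguely continuous), so $\lb\mu,\mu\rb_\cA=\lim_i \eta_i\ast\widetilde{\eta_i}$ is positive definite as claimed.
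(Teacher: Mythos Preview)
Your proposal is correct and follows essentially the same approach as the paper. The paper defers (a) to its appendix, where exactly the uniform $\|\cdot\|_V$-bound you sketch is established (via the ``universal bound'' $|\nu|(B)\le |B-V|\,\|\nu\|_V/|V|$ together with the van Hove property to control $|A_i-V|/|A_i|$), then declares (b)--(d) ``straightforward'' and proves (e) by the same computation you give, namely $\frac{1}{|A_i|}(\mu|_{A_i})\ast\widetilde{(\mu|_{A_i})}\ast\varphi\ast\widetilde\varphi(0)=\frac{1}{|A_i|}\int|\mu|_{A_i}\ast\varphi|^2\,\dd s\ge 0$ followed by passage to the vague limit.
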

\begin{proof} \textbf{(a)} This is discussed in the appendix.

\textbf{(b), (c), (d)} are straightforward.

\textbf{(e)} The measure $\frac{1}{|A_i|}
(\mu|_{A_i})\ast\widetilde{(\mu|_{A_i})}$ is positive definite as for any $\varphi\in\Cc(G)$ we find
\[
\frac{1}{|A_i|}
(\mu|_{A_i})\ast\widetilde{(\mu|_{A_i})}\ast \varphi \ast \widetilde{\varphi}(0) = \frac{1}{|A_i|}\int |\psi (s)|^2  \dd s \geq 0
\]
with $\psi:=\mu|_{A_i} \ast \varphi$.
 Taking the limit we then find the desired statement (e).
\end{proof}

\begin{definition}[Autocorrelation and diffraction of $\mu$]
Let $\mu$  be a translation bounded measure and let $\cA$ be a van
Hove net. If it exists,  the measure $\lb \mu , \mu \rb_{\cA} $ is called the autocorrelation of $\mu$ and denoted by $\gamma_\mu$.
Then,  the positive measure
$\reallywidehat{\gamma_{\mu}}$ is called the \emph{diffraction measure of
$\mu$}.
\end{definition}

Let us also recall the concept of Fourier--Bohr coefficients.

\begin{definition}[Fourier--Bohr coefficient] Let $\mu$  be a translation bounded measure, let $\cA$ be a van Hove net and let $\chi \in \widehat{G}$. If it exists,  the
\[
\lim_{i} \frac{1}{|A_i|} \int_{A_i}
\overline{\chi(t)} \dd \mu(t) \,.
\] is called the  \emph{Fourier--Bohr coefficient} of $\mu$ and denoted by
 $a_{\chi}^\cA(\chi)$.
\end{definition}

Fourier--Bohr coefficients can be expressed via the reflected Eberlein convolution as follows. The proof is straightforward.

\begin{proposition}\label{prop-FB} Let $\mu$  be a translation bounded measure, let $\cA$ be a van Hove net  and let $\chi \in \widehat{G}$. Then, the Fourier--Bohr coefficient $a_{\chi}^\cA(\chi)$ exists if and only if
$ \lb \mu , \chi \rb_{\cA}$ exists. Moreover, in this case $\lb \mu
, \chi \rb_{\cA}$ is the absolutely continuous measure with density
function $a_{\chi}^\cA(\mu) \chi$. \qed
\end{proposition}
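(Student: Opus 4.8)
The plan is to compute the pre-limit measures $\frac{1}{|A_i|}(\mu|_{A_i})\ast\widetilde{(\chi|_{A_i})}$ explicitly, where the character $\chi$ is identified, as usual, with the translation bounded measure having density $\chi$ with respect to Haar measure. Since $\overline{\chi(-s)}=\chi(s)$, one reads off from the definition of ${}^{\sim}$ that $\widetilde{(\chi|_{A_i})}$ is the absolutely continuous measure with density $\mathbf{1}_{-A_i}\chi$. Hence $\frac{1}{|A_i|}(\mu|_{A_i})\ast\widetilde{(\chi|_{A_i})}$ is absolutely continuous, and a direct computation from the definition of the convolution of finite measures, using $\chi(r-s)=\chi(r)\overline{\chi(s)}$ and Fubini, shows that its density at $r\in G$ is
\[
g_i(r)\;=\;\chi(r)\,\frac{1}{|A_i|}\int_{A_i\cap(r+A_i)}\overline{\chi(s)}\,\dd\mu(s)\,.
\]
Writing $c_i:=\frac{1}{|A_i|}\int_{A_i}\overline{\chi}\,\dd\mu$, the goal is to show that $g_i$ is vaguely close to the function $c_i\chi$, so that the limits of $(c_i)$ and of $(g_i\,\dd s)$ exist simultaneously.

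Since $|\chi|=1$, the displayed formula gives $|g_i(r)-c_i\chi(r)|\le\frac{1}{|A_i|}|\mu|\big(A_i\setminus(r+A_i)\big)$. Next I would note that, directly from the definition of the $K$-boundary, $A_i\setminus(r+A_i)=\big((G\setminus A_i)-\{-r\}\big)\cap A_i\subseteq\partial^{\{-r\}}A_i$, and that $\partial^{K}A$ is monotone increasing in $K$ (if $K'\subseteq K''$ then $\partial^{K'}A\subseteq\partial^{K''}A$). Hence, for a compact set $L\subseteq G$ and $K:=-L$, one gets $\sup_{r\in L}|g_i(r)-c_i\chi(r)|\le|\mu|(\partial^{K}A_i)/|A_i|$. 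Since $\mu$ is translation bounded and $\cA$ is a van Hove net, the right-hand side tends to $0$ (this is the standard decay property of translation bounded measures along van Hove nets, cf. the appendix and Lemma~\ref{thm:rebe props}(a)). Consequently, for every $\varphi\in\Cc(G)$,
\[
\Big|\tfrac{1}{|A_i|}\big((\mu|_{A_i})\ast\widetilde{(\chi|_{A_i})}\big)(\varphi)-c_i\!\int_G\varphi(s)\chi(s)\,\dd s\Big|\;\le\;\|\varphi\|_\infty\,|\supp\varphi|\;\sup_{r\in\supp\varphi}|g_i(r)-c_i\chi(r)|\;\longrightarrow\;0\,.
\]

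Both implications now follow from this estimate. If $a_{\chi}^{\cA}(\mu)=\lim_i c_i$ exists, the estimate shows $\frac{1}{|A_i|}\big((\mu|_{A_i})\ast\widetilde{(\chi|_{A_i})}\big)(\varphi)\to a_{\chi}^{\cA}(\mu)\int_G\varphi(s)\chi(s)\,\dd s$ for all $\varphi\in\Cc(G)$, so $\lb\mu,\chi\rb_{\cA}$ exists and is the absolutely continuous measure with density $a_{\chi}^{\cA}(\mu)\chi$, which is the asserted formula. Conversely, if $\lb\mu,\chi\rb_{\cA}=\rho$ exists, choose $\varphi_0\in\Cc(G)$ with $\int_G\varphi_0(s)\chi(s)\,\dd s\neq0$ (such $\varphi_0$ exists, e.g. a non-negative bump supported near $0$, since $\chi$ is continuous with $\chi(0)=1$); evaluating the estimate at $\varphi_0$ gives $c_i\int_G\varphi_0(s)\chi(s)\,\dd s\to\rho(\varphi_0)$, whence $(c_i)$ converges and $a_{\chi}^{\cA}(\mu)$ exists. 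The only step that is not pure bookkeeping with the definitions is the decay $|\mu|(\partial^{K}A_i)/|A_i|\to0$, and that is a standard fact about translation bounded measures.
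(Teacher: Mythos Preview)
Your argument is correct and is precisely the kind of direct verification the paper has in mind: the paper gives no proof beyond the remark ``The proof is straightforward'' and a \qed, and your explicit computation of the density of $\frac{1}{|A_i|}(\mu|_{A_i})\ast\widetilde{(\chi|_{A_i})}$ together with the van Hove boundary estimate is exactly the intended straightforward check. The only nontrivial ingredient you single out, namely $|\mu|(\partial^{K}A_i)/|A_i|\to 0$ for translation bounded $\mu$, is indeed standard and follows from the universal bound in the appendix.
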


We now turn to functions and their means. This will provide a
further understanding of the Eberlein convolution.

\begin{definition}[The mean $M_\cA$]
Let $\cA$ be a van Hove net and $f$ a bounded measurable function on
$G$. If the limit
\[\lim_i \frac{1}{|A_i|} \int_{A_i} f(t) \dd t
\,,
\]
exists, it is called the \emph{mean} of $f$ with respect to
$\cA$ and denote by $M_{\cA}(f)$.
\end{definition}

Clearly,  the mean is $G$-invariant, i.e. for all bounded measurable
function on $G$ such that $M_{\cA}(f)$ exists, and all $t \in G$,
the mean $M_{\cA}(\tau_t) f$ exists and satisfies
\[M_{\cA}(\tau_t f) =M_{\cA} (f) \,.\]

\smallskip

Taking the mean is similar to integration with respect to a
(probability) measure. Accordingly, the mean allows one to define
analogues of convolution.

\begin{definition}[Reflected Eberlein convolution of functions]
Let $\cA$ be a van Hove net. Let  $f,g\in \Cu(G)$  be given. If for
each $t \in G$ the mean $M_\cA(f \overline{\tau_t g })$ exists,  the function
\[
\lb f , g \rb_{\cA} : G\longrightarrow \CC, t \mapsto M_\cA (f
\overline{\tau_t g}),
\]
is called the
\textit{reflected Eberlein convolution} of $f$ and $g$ with respect
to $\cA$.
\end{definition}

We note that the reflected Eberlein convolution of $f$ and $g$ is
given by
\[
 \lb f , g \rb_{\cA}(t) = \lim_{i} \frac{1}{|A_i|} \int_{A_i} f(s) \overline{g(s-t)} \dd s \, .
\]
Hence, the reflected Eberlein convolution can be understood  as an
averaged version of the convolution of $f$ and $\tilde{g}$. The
relationship between the reflected Eberlein convolution of measures
and of functions is discussed  in the subsequent lemma.

\begin{lemma}\label{lemma-relationsship}  Let $\cA$ be a van Hove net. Let $\mu,\nu$ be
translation bounded measures. Then, the following statements are
equivalent:

\begin{itemize}
\item[(i)] The reflected Eberlein convolution $\lb \mu,\nu\rb_{\cA}$
exists.

\item[(ii)] For all $\varphi,\psi \in \Cc (G)$ the reflected
Eberlein convolution $\lb \mu \ast \varphi, \nu \ast \psi\rb_{\cA}$
exists.

\item[(iii)] For all $\varphi,\psi \in \Cc (G)$  the mean  $M_{\cA} (\mu \ast
\varphi \, \cdot \, \overline{\nu\ast\psi})$ exists.

\end{itemize}
If one of the equivalent conditions (i), (ii) and (iii) is valid the
equalities
\[
  \lb \mu \ast\varphi , \nu\ast\psi \rb_{\cA} (t) =\lb \mu , \nu
    \rb_{\cA}\ast\varphi\ast\tilde{\psi} (t) = M_{\cA} (\mu \ast \varphi
   \; \overline{\nu \ast \tau_t \psi} )
   \]
    hold for all $\varphi,\psi \in\Cc (G)$ and all $t\in G$.
\end{lemma}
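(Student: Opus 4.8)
The plan is to prove the cycle (i)$\Rightarrow$(ii)$\Leftrightarrow$(iii)$\Rightarrow$(i), reading off the displayed identities from the argument for (i)$\Rightarrow$(ii). Write $\gamma_i:=\frac{1}{|A_i|}(\mu|_{A_i})\ast\widetilde{(\nu|_{A_i})}$, so that (i) asserts that the net $(\gamma_i)$ converges vaguely. The equivalence (ii)$\Leftrightarrow$(iii) is purely formal: since $\tau_t(\nu\ast\psi)=\nu\ast\tau_t\psi$ with $\tau_t\psi\in\Cc(G)$, existence of $M_\cA\big((\mu\ast\varphi)\,\overline{\tau_t(\nu\ast\psi)}\big)$ for all $\varphi,\psi\in\Cc(G)$ and some fixed $t$ amounts to existence of $M_\cA\big((\mu\ast\varphi)\,\overline{\nu\ast\psi}\big)$ for all $\varphi,\psi$; so (iii) for all $\varphi,\psi$ is the same as (ii).

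The technical heart is a comparison lemma. Fix $\varphi,\psi\in\Cc(G)$ and $t\in G$, and set $f_i:=(\mu|_{A_i})\ast\varphi$, $g_i:=(\nu|_{A_i})\ast\psi$, which lie in $\Cc(G)$ (as $\overline{A_i}$ is compact). Using associativity and commutativity of convolution together with the identity $\widetilde{\rho\ast\chi}=\widetilde{\rho}\ast\widetilde{\chi}$ (valid for a measure $\rho$ and $\chi\in\Cc(G)$), one computes
\[
\gamma_i\ast\varphi\ast\widetilde{\psi}\,(t)\;=\;\frac{1}{|A_i|}\,\big(f_i\ast\widetilde{g_i}\big)(t)\;=\;\frac{1}{|A_i|}\int_G f_i(s)\,\overline{g_i(s-t)}\,\dd s .
\]
I then claim the boundary estimate
\[
\frac{1}{|A_i|}\left(\int_G f_i(s)\,\overline{g_i(s-t)}\,\dd s-\int_{A_i}(\mu\ast\varphi)(s)\,\overline{(\nu\ast\psi)(s-t)}\,\dd s\right)\longrightarrow 0 .
\]
To see this, note that $f_i$ agrees with $\mu\ast\varphi$ on $\{s:s-\supp\varphi\subseteq A_i\}$, that $s\mapsto g_i(s-t)$ agrees with $s\mapsto(\nu\ast\psi)(s-t)$ on $\{s:s-(t+\supp\psi)\subseteq A_i\}$, and that $f_i$ is supported in $\overline{A_i+\supp\varphi}$; moreover, since $\mu,\nu$ are translation bounded, all four functions involved are bounded uniformly in $i$ (for instance $\|f_i\|_\infty\le\|\varphi\|_\infty\,\|\mu\|_V\,N$ with $N$ the number of $V$-translates needed to cover $\supp\varphi$, while $\mu\ast\varphi,\nu\ast\psi\in\Cu(G)$ are just bounded). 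Hence the difference of the two integrals is dominated by a constant, independent of $i$, times $|A_i\setminus A_i^{-K}|+|\overline{A_i+K}\setminus A_i|$ for a suitable compact $K=K(\varphi,\psi,t)$, where $A_i^{-K}:=\{s:s+K\subseteq A_i\}$; and both quantities are at most $|\partial^{K}A_i|=o(|A_i|)$ by the van Hove property. This boundary estimate is the step I expect to require the most care.

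Granting the claim, (iii)$\Rightarrow$(i) goes as follows. By (ii)$\Leftrightarrow$(iii) the net $\frac{1}{|A_i|}\int_{A_i}(\mu\ast\varphi)(s)\,\overline{(\nu\ast\tau_t\psi)(s)}\,\dd s$ converges for all $\varphi,\psi\in\Cc(G)$ and all $t\in G$, hence, by the claim, so does $\gamma_i\ast\varphi\ast\widetilde{\psi}(t)$. By Lemma~\ref{thm:rebe props}(a), $(\gamma_i)$ eventually lies in a vaguely compact set $X\subseteq\cM^\infty(G)$, so it has convergent subnets; if $\gamma,\gamma'$ are two subnet limits then $\gamma\ast\varphi\ast\widetilde{\psi}=\gamma'\ast\varphi\ast\widetilde{\psi}$ for all $\varphi,\psi$ (both coincide with the limit of the full net $\gamma_i\ast\varphi\ast\widetilde{\psi}$, since $\mu\ast(\varphi\ast\widetilde{\psi})(t)$ is the value of the measure on a fixed element of $\Cc(G)$), and testing against an approximate identity of the form $\varphi_U\ast\widetilde{\varphi_U}$ (with $\varphi_U\ge0$, $\int_G\varphi_U\,\dd s=1$ and shrinking supports) forces $\gamma=\gamma'$. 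A net in a compact Hausdorff space with a unique cluster point converges, so $(\gamma_i)$ converges vaguely, i.e.\ (i) holds.

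Finally, (i)$\Rightarrow$(ii) together with the identities: if $\gamma_i\to\lb\mu,\nu\rb_{\cA}$ vaguely then $\gamma_i\ast\varphi\ast\widetilde{\psi}(t)\to\lb\mu,\nu\rb_{\cA}\ast\varphi\ast\widetilde{\psi}(t)$ for all $\varphi,\psi\in\Cc(G)$ and $t\in G$, and by the claim the net $\frac{1}{|A_i|}\int_{A_i}(\mu\ast\varphi)(s)\,\overline{(\nu\ast\psi)(s-t)}\,\dd s$ converges to the same value. Since $(\nu\ast\tau_t\psi)(s)=(\nu\ast\psi)(s-t)$, this says exactly that $M_\cA\big((\mu\ast\varphi)\,\overline{\nu\ast\tau_t\psi}\big)$ exists and equals $\lb\mu,\nu\rb_{\cA}\ast\varphi\ast\widetilde{\psi}(t)$; and by the definition of the reflected Eberlein convolution of functions (using again $\tau_t(\nu\ast\psi)=\nu\ast\tau_t\psi$) this common value is $\lb\mu\ast\varphi,\nu\ast\psi\rb_{\cA}(t)$. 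This is precisely the asserted chain of equalities, and in particular it establishes (ii).
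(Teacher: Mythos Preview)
Your proof is correct and follows essentially the same approach as the paper: the formal equivalence (ii)$\Leftrightarrow$(iii), the boundary comparison between $(\mu|_{A_i})\ast\varphi$ and $(\mu\ast\varphi)|_{A_i}$ via the van Hove property, and the resulting identification of $\gamma_i\ast\varphi\ast\widetilde{\psi}(t)$ with the averaged integral are exactly what the paper does. The only cosmetic difference is that for (iii)$\Rightarrow$(i) you spell out the compactness-plus-unique-cluster-point argument (using approximate identities $\varphi_U\ast\widetilde{\varphi_U}$), whereas the paper packages this step into a separate appendix lemma characterizing vague convergence of a net in $\cM_{C,V}(G)$ by convergence of $\mu_i\ast\varphi\ast\widetilde{\psi}(0)$ for all $\varphi,\psi\in\Cc(G)$; the content is the same.
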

\begin{proof} We first discuss the equivalence between (ii) and (iii): By definition we have
\[
\lb \mu \ast \varphi, \nu\ast \psi\rb_{\cA}(t) = M_{\cA} (\mu\ast
\varphi  \; \overline{\tau_t (\nu\ast \psi}) ) =
M_{\cA} (\mu \ast \varphi \overline{\nu\ast \tau_t \psi})
\]
(if the corresponding limits exist). This easily gives the desired equivalence. We now turn to proving the equivalence between (i) and (iii). Along the way we will establish the given formulae:   For $A\subset G$ and $f : G\longrightarrow\CC$ we write $f|_A$ for
the functions which agrees with $f$ on $A$ and is zero outside of
$A$.

We start by comparing $\mu|_A \ast \varphi$ and $\mu\ast \varphi|_A$
for $\varphi \in\Cc (G)$:

Let $K$ be a compact set containing the neutral element of $G$ and
the support of $\varphi$.

For $t\in G$ with $t\notin A+ K$ we find
\[
(\mu|_A \ast \varphi) (t) = 0 = (\mu\ast \varphi) |_A (t) \,.
\]

For $t\in G$ with $t - K\subset A$ we find
\[
(\mu|_A \ast \varphi) (t) = \int_A \varphi (t-s) \dd\mu (s) = \int \varphi (t-s) \dd\mu = (\mu\ast \varphi)|_A (t) \,.
\]
Any remaining $t\in G$ belongs to $\partial^K A$.  So, we can summarize that $\mu|_A
\ast \varphi $ and $\mu\ast \varphi|_A$ agree outside of $\partial^K
A$.  From this  together with
\[
\mu|_A \ast \widetilde{\nu|_A}|
\ast \varphi \ast \widetilde{\psi} (0) = \int_G (\mu|_A \ast \varphi)
(s) \overline{(\nu|_A \ast \psi)} (s) \dd s
\]
and
\[  \int_A \mu\ast \varphi (s) \overline{\nu\ast \psi(s)}
\dd s =  \int_G (\mu\ast \varphi)|_A (s)  \overline{\nu\ast (\psi|_A) (s) } ds
\] we obtain from the van Hove property
\[
\frac{1}{|A_i|}\left|\mu|_{A_i} \ast \widetilde{\nu|_{A_i}}|
\ast \varphi \ast \widetilde{\psi} (0) -\int_{A_i} \mu\ast \varphi
(s) \overline{\nu\ast \psi}(s) \dd s\right|\to 0.
\]
Hence, existence of the mean $M_{\cA} (\mu\ast \varphi
\overline{\nu\ast \psi})$ for all $\varphi,\psi \in\Cc (G)$ is
equivalent to existence of the limit

\[
\lim_i \frac{1}{|A_i|} \mu|_{A_i} \ast \widetilde{\nu|_{A_ni}}
\ast \varphi \ast \widetilde{\psi} (0) \,.
\]
The latter in turn is
equivalent to vague convergence of $\frac{1}{|A_i|} \mu|_{A_i} \ast
\widetilde{\nu|_{A_i}}|$ (as discussed in the appendix).  This finishes the proof.
\end{proof}

As it is both  instructive and already interesting we point out the
following situation which is  covered by our setting.

\begin{example}[The case of $\ZZ$]  Let $G=\ZZ$ be the
group of integers (with addition). Then, $\cA$ with $A_n
=\{1,\ldots, n\}$ is a canonical choice of van Hove sequence. We can
identify the measures $\mu$ on $\ZZ$ with the function  - again
denoted by $\mu$ - with $\mu (k) :=\mu(\{k\})$. Then,
$\lb\mu,\nu\rb_{\cA}$ exists if and only if for each $j\in\ZZ$ the
limit
\[
\lim_n \frac{1}{n} \sum_{k=1}^n  \mu(k) \overline{\nu(k-j)}
\]
exists. \end{example}

\section{Existence of the reflected Eberlein convolution}\label{section-existence}.
The reflected Eberlein convolution is defined as a limit. So, a natural question concerns existence of this limit.  Here we discuss how existence of the limit  can always be achieved if one replaces the original van Hove net by a subnet and we also discuss how we can  work with van Hove sequences instead of nets under suitable countability assumptions on the topology.

We start with the following result. The result is certainly well-known. We include a proof for the convenience of the reader in the appendix.

\begin{theorem}[Existence of universal van Hove net]\label{thm:univ}
Every LCAG admits a van Hove net $\cA$ with the following properties:
\begin{itemize}
  \item[(a)]
	For all $f \in L^\infty(G)$ and $\chi \in \widehat{G}$ the Fourier--Bohr coefficient $a_{\chi}^\cA(f)$ exists.
  \item[(b)] For all $f,g \in L^\infty(G)$,  $\lb f,g \rb_{\cA}$ exists.
  \item[(c)] For all $\mu,\nu \in \cM^\infty(G)$,  $\lb \mu,\nu \rb_{\cA}$ exists.
  \item[(d)] For all $\mu \in \cM^\infty(G)$,  the autocorrelation $\gamma = \lb \mu ,\mu \rb_{\cA}$ exists with respect to $\cA$.
\end{itemize}
Furthermore, any van Hove net $\cB$  has a subnet $\cA$ with these properties.
\qed
\end{theorem}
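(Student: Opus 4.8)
The plan is to obtain the universal van Hove net as a subnet of a given van Hove net $\cB$ by a standard compactness (diagonal) argument, after recognizing that each of the quantities in (a)--(d) lives in a suitable compact space. First I would fix a van Hove net $\cB = (B_j)_{j\in J}$, which exists by \cite[Prop.~5.10]{PRS}. The key observation is that all four types of limits can be encoded simultaneously: for (a), given $\chi\in\widehat G$ and $f\in L^\infty(G)$ with $\|f\|_\infty\le 1$, the net $\frac{1}{|B_j|}\int_{B_j}\overline{\chi(t)}f(t)\,\dd t$ lies in the closed disk of radius $|B|\cdot\|f\|_\infty/|B|$... more carefully, it is bounded by $\|f\|_\infty$, hence in a fixed compact subset of $\CC$; for (b) and (d), by Lemma~\ref{lemma-relationsship} the existence of $\lb f,g\rb_\cA$ and of $\lb\mu,\nu\rb_\cA$ is equivalent to existence of the means $M_\cA(\mu\ast\varphi\;\overline{\nu\ast\psi})$ (and likewise for functions), and by Lemma~\ref{thm:rebe props}(a) the truncated convolutions $\frac{1}{|B_j|}(\mu|_{B_j})\ast\widetilde{(\nu|_{B_j})}$ eventually lie in a vaguely compact set $X\subseteq\cM^\infty(G)$. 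So in each case the relevant net eventually takes values in a compact space.

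The main step is then to pass to a subnet along which \emph{all} of these converge at once. The clean way to phrase this is via the product of compact spaces: form the compact space
\[
\mathcal P \;=\; \prod_{(\mu,\nu)} X_{\mu,\nu} \;\times\; \prod_{(f,g)} Y_{f,g} \;\times\; \prod_{(f,\chi)} D_{f,\chi},
\]
where the products run over all pairs of translation bounded measures, all pairs in $L^\infty(G)$ (or, using the equivalences above, it suffices to run over pairs $\varphi,\psi\in\Cc(G)$ convolved against $\mu,\nu$), and all pairs $(f,\chi)$, and where each factor is the appropriate vaguely compact set of measures or compact disk in $\CC$. The net $j\mapsto \bigl(\frac{1}{|B_j|}(\mu|_{B_j})\ast\widetilde{(\nu|_{B_j})},\ \ldots\bigr)_{(\mu,\nu),\ldots}$ eventually lies in $\mathcal P$, which is compact by Tychonoff, so it admits a convergent subnet; the corresponding subnet $\cA$ of $\cB$ is the desired van Hove net (subnets of van Hove nets are van Hove, since the defining ratio $|\partial^K A_i|/|A_i|\to 0$ is inherited). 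Coordinatewise convergence in $\mathcal P$ is exactly the assertion that all the limits in (a)--(d) exist along $\cA$, using the equivalences from Lemma~\ref{lemma-relationsship} to convert vague convergence of the truncated convolutions into existence of $\lb\mu,\nu\rb_\cA$ and of $\lb\mu,\mu\rb_\cA$ (for (d), take $\nu=\mu$), and Proposition~\ref{prop-FB} together with the mean formulation to handle (a) and (b).

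The one genuine subtlety — and the step I expect to be the main obstacle — is cardinality: the index set of the product $\mathcal P$ is a proper class / very large set (all of $L^\infty(G)$, all translation bounded measures), so one must be careful that Tychonoff still applies and that "eventually in $\mathcal P$" makes sense. This is handled by noting that each factor is a fixed small compact space and that it suffices to converge on a determining subfamily: for (c) and (d) one only needs convergence of $\frac{1}{|B_j|}(\mu|_{B_j})\ast\widetilde{(\nu|_{B_j})}$ tested against a countable-or-not dense family of $\varphi,\psi\in\Cc(G)$ for each pair $(\mu,\nu)$, and by Lemma~\ref{lemma-relationsship} convergence of these scalars $\frac{1}{|B_j|}\mu|_{B_j}\ast\widetilde{\nu|_{B_j}}\ast\varphi\ast\widetilde\psi(0)$ already forces vague convergence; similarly (a) and (b) reduce to means of products $\mu\ast\varphi\cdot\overline{\nu\ast\psi}$. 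One then only needs to run the product over the (still large, but set-sized) collection of all such triples, and invoke Tychonoff for this product of compact disks and compact measure-sets; a convergent subnet of the resulting net gives $\cA$. I would relegate the bookkeeping of exactly which scalar nets to track, and the verification that their joint convergence yields all of (a)--(d), to the appendix as indicated, since the paper already defers Lemma~\ref{thm:rebe props}(a) and the vague-convergence equivalence there.
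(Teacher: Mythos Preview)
Your approach is correct and is essentially the paper's own: the paper also extracts the desired subnet via Tychonoff, the only cosmetic difference being that it does so in three successive passes (first Fourier--Bohr coefficients, then $\lb f,g\rb$, then measures via Lemma~\ref{lemma-relationsship}) rather than in one large product. Your worry about cardinality is unfounded, though: $L^\infty(G)$, $\widehat G$, and $\cM^\infty(G)$ are all honest sets, so the product $\mathcal P$ is indexed by a set and Tychonoff applies without any reduction to a ``determining subfamily''; that paragraph of the plan can simply be dropped.
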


We next discuss how van Hove nets can be replaced by van Hove sequences under certain circumstances.

First note that a van Hove sequence exists if and only if the group $G$ is $\sigma$-compact \cite[Prop.~B.6]{SS}. In this case, we  can  therefore replace van Hove nets by
van Hove sequences in the definition of the Eberlein convolution of measures and functions.
Then, the results in the subsequent part of this article   all hold with van Hove
net replaced by van Hove sequence. Note, however, we will still need
to use van Hove nets in the proofs whenever we apply  (a) of Lemma  \ref{thm:rebe props}.
The reason is that in general the convergent subnet appearing in (a) of that  lemma  will not be a subsequence.

We now turn to the case that the group is second countable. Then it is $\sigma$-compact and  we can therefore work with van Hove sequences. Moreover, the  subnet appearing in (a) of Lemma  \ref{thm:rebe props} can be chosen as a subsequence. Hence, all subsequent considerations of this article (statements and proofs)  are valid
with van Hove net replaced by van Hove sequence for second countable locally compact Abelian groups.
Moreover, in this case we have a - so to speak - countable analogue of the above result on universal van Hove sequences. Specifically, whenever $\cB$ is a van Hove sequence and $M$ is any set of translation bounded measures, for any pair $\mu, \nu$ of measures in $M$, the reflected Eberlein convolution $\lb \mu, \nu
\rb_{\cC}$ exists along some subsequence $\cC$ of $\cB$. If $M$ is
countable, then a standard diagonalisation argument (or Tychonoff's
Theorem) shows that given any van Hove sequence $\cA$, there exists
some subsequence $\cC$ such that, that for all $\mu, \nu \in M$ the
reflected Eberlein convolution $\lb \mu, \nu \rb_{\cC}$ exists.

In the context of working with van Hove sequences instead of van Hove nets we also record the following three results.

\begin{lemma}\label{lem4} Let $f \in L^\infty(G), g \in \Cu(G)$ and let $(A_i)_{i \in I}$ be a van Hove net. Then, the set of $t \in G$ for which the net
\begin{equation*}
\frac{1}{|A_i|} \int_{A_i} f(s) \overline{g(s-t)} \dd s.
\end{equation*}
converges is closed in $G$.
\end{lemma}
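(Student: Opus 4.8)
The plan is to show that the complement of the convergence set is open, or equivalently that the convergence set contains all of its limit points. The key idea is a uniform equicontinuity estimate in the variable $t$: the point evaluations
\[
F_i(t) := \frac{1}{|A_i|}\int_{A_i} f(s)\,\overline{g(s-t)}\,\dd s
\]
form a family of functions of $t$ that is uniformly equicontinuous, uniformly in $i$. Once this is established, closedness of $\{t : (F_i(t))_i \text{ converges}\}$ follows by a routine $3\varepsilon$ argument: if $t_n \to t$ with each $t_n$ in the convergence set, then for $t_n$ close to $t$ the oscillation $\limsup_i F_i(t) - \liminf_i F_i(t)$ is controlled by $2\sup_i |F_i(t) - F_i(t_n)|$ plus the (zero) oscillation at $t_n$, hence is zero, so $(F_i(t))_i$ is Cauchy and converges.

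First I would establish the equicontinuity. For $t, t' \in G$ we have
\[
|F_i(t) - F_i(t')| \leq \frac{1}{|A_i|}\int_{A_i} |f(s)|\,\bigl|g(s-t) - g(s-t')\bigr|\,\dd s \leq \|f\|_\infty \cdot \sup_{s \in G} \bigl|g(s-t) - g(s-t')\bigr|.
\]
Since $g \in \Cu(G)$, the function $g$ is uniformly continuous, so $\sup_{s}|g(s-t) - g(s-t')| = \sup_{u}|g(u) - g(u - (t'-t))| = \|g - \tau_{t'-t}g\|_\infty$, and this tends to $0$ as $t' - t \to 0$ by uniform continuity of $g$ (equivalently, by continuity of the translation action on $\Cu(G)$ with the sup norm). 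Crucially, this bound is independent of $i$, which is exactly what is needed. Note this is the place where the hypothesis $g \in \Cu(G)$ (rather than merely $g \in L^\infty(G)$) is used; the hypothesis $f \in L^\infty(G)$ only enters through the crude bound $|f| \leq \|f\|_\infty$ and the fact that $\frac{1}{|A_i|}|A_i| = 1$.

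Then I would run the $3\varepsilon$ argument carefully. Fix $t$ a limit point of the convergence set and $\varepsilon > 0$. By equicontinuity choose a neighbourhood $U$ of $0$ such that $\sup_i |F_i(t) - F_i(t')| < \varepsilon$ whenever $t - t' \in U$; pick $t'$ in the convergence set with $t - t' \in U$. For indices $i, j$ large enough that $|F_i(t') - F_j(t')| < \varepsilon$, we get
\[
|F_i(t) - F_j(t)| \leq |F_i(t) - F_i(t')| + |F_i(t') - F_j(t')| + |F_j(t') - F_j(t)| < 3\varepsilon,
\]
so $(F_i(t))_i$ is a Cauchy net in $\CC$ and hence converges. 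Therefore $t$ lies in the convergence set, proving it is closed. I do not anticipate a serious obstacle here; the only point requiring mild care is that $(A_i)$ is a net rather than a sequence, so "Cauchy" must be interpreted in the net sense and one invokes completeness of $\CC$ for nets, which is standard. The essential content is entirely the uniform-in-$i$ equicontinuity estimate above.
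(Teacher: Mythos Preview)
Your proposal is correct and follows essentially the same approach as the paper: define $F_i(t)$, establish uniform equicontinuity of the family $(F_i)$ from the uniform continuity of $g$ and the bound $|f|\le\|f\|_\infty$, and then conclude that the set of $t$ where $(F_i(t))$ is Cauchy is closed. The paper states the last step in one line (``the set of $t$ where $(F_i(t))$ is a Cauchy-net is closed''), whereas you spell out the $3\varepsilon$ argument explicitly, but the content is identical.
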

\begin{proof} For any $i\in I$ we define
\[
F_i : G\longrightarrow \CC, F_i (t) = \frac{1}{|A_i|}\int_{A_i} f(s) \overline{g(s-t)} \dd s \,.
\]
Then, the family $(F_i)$ is uniformly equicontinuous. Indeed, by uniform continuity of $g$ , for any $\varepsilon>0$ we can find a neighborhood $U$ of $e\in G$ with $U =-U$ and $|g(s) - g(s')|\leq\varepsilon$ whenever $s-s'\in U$ holds and this gives
$$|F_i(t) - F_i (t')|\leq \varepsilon \|f\|_\infty$$
for all $i\in I$ whenever $t-t'\in U$ holds. As $(F_i)$ is uniformly equicontinuous, the set of $t$ where $(F_i(t))$ is a Cauchy-net is closed. This gives the desired statement.
\end{proof}


We can now prove the following result. The corresponding result for measures is \cite[Thm.~4.15]{LSS3}.

\begin{corollary}\label{cor:2} Assume that $G$ is $\sigma$ compact and has a countable dense set $D \in G$. Let $(A_n)_n$ be a van Hove sequence in $G$.  Then, for all  $f \in L^\infty(G), g \in \Cu(G)$ there exists a subsequence $\cB$ of $\cA$ such that $\lb f,g \rb_{\cB}$ exists.
\end{corollary}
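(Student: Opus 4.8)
The plan is to combine the equicontinuity from Lemma~\ref{lem4} with a standard diagonal/compactness argument over the countable dense set $D$. First I would fix the van Hove sequence $\cA = (A_n)_n$ and, for each $n$, define
\[
F_n : G \longrightarrow \CC, \qquad F_n(t) = \frac{1}{|A_n|} \int_{A_n} f(s) \overline{g(s-t)} \dd s.
\]
As in the proof of Lemma~\ref{lem4}, the family $(F_n)_n$ is uniformly equicontinuous; moreover each $F_n$ is bounded by $\|f\|_\infty \|g\|_\infty$, so the values $(F_n(t))_n$ lie in a fixed compact disc in $\CC$ for every $t$. Enumerate $D = \{d_1, d_2, \dots\}$. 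By compactness of the disc, one can pass to a subsequence along which $(F_n(d_1))_n$ converges; passing to a further subsequence makes $(F_n(d_2))_n$ converge; iterating and taking the diagonal subsequence $\cB = (A_{n_k})_k$ of $\cA$, we obtain a subsequence along which $(F_{n_k}(d))_k$ converges for every $d \in D$.

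Next I would upgrade convergence on the dense set $D$ to convergence on all of $G$. This is where uniform equicontinuity does the work: given $t \in G$ and $\eps > 0$, choose a symmetric neighbourhood $U$ of $0$ with $|F_n(t) - F_n(t')| \le \eps \|f\|_\infty$ whenever $t - t' \in U$, uniformly in $n$ (such $U$ exists by the argument in Lemma~\ref{lem4}, using uniform continuity of $g$). Pick $d \in D \cap (t + U)$. Then for $k, \ell$ large the standard three-$\eps$ estimate
\[
|F_{n_k}(t) - F_{n_\ell}(t)| \le |F_{n_k}(t) - F_{n_k}(d)| + |F_{n_k}(d) - F_{n_\ell}(d)| + |F_{n_\ell}(d) - F_{n_\ell}(t)|
\]
shows $(F_{n_k}(t))_k$ is Cauchy in $\CC$, hence convergent. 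Define $\lb f, g \rb_{\cB}(t)$ to be this limit; by construction it equals $\lim_k \frac{1}{|A_{n_k}|}\int_{A_{n_k}} f(s)\overline{g(s-t)}\dd s = \lim_k M_{\cB}(f\,\overline{\tau_t g})$ (after identifying the integral with the mean), so $\lb f, g \rb_{\cB}$ exists in the sense of the definition.

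I do not expect any serious obstacle here: the two ingredients — uniform equicontinuity of the averaging functions $F_n$ and the diagonal extraction over a countable dense set — are exactly what Lemma~\ref{lem4} was set up to provide, and $\sigma$-compactness guarantees the van Hove sequence $\cA$ exists in the first place. The only mild point of care is bookkeeping: one should note that the pointwise limit of the $F_{n_k}$ is automatically uniformly continuous (being a uniform-on-compacta limit of an equicontinuous family), so $\lb f, g \rb_{\cB}$ is a genuine function in $\Cu(G)$, matching the output type expected of the reflected Eberlein convolution of functions. One could alternatively phrase the extraction via Tychonoff's theorem applied to the product of compact discs indexed by $D$, but the explicit diagonal argument is cleaner to write and parallels the measure case of \cite[Thm.~4.15]{LSS3}.
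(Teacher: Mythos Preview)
Your proposal is correct and follows essentially the same approach as the paper: diagonal extraction over the countable dense set $D$, followed by extension to all of $G$ via the equicontinuity furnished by Lemma~\ref{lem4}. The only cosmetic difference is that the paper invokes Lemma~\ref{lem4} as a black box (the set of convergence points is closed, hence contains $\overline{D}=G$), whereas you unpack the same equicontinuity into an explicit three-$\eps$ estimate; your additional remark that the limit lies in $\Cu(G)$ is a pleasant bonus but not required for the statement.
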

\begin{remark} We note that the assumptions of the corollary are satisfied in second countable groups.
\end{remark}
\begin{proof} Let $t_m$ be an enumeration of $D$. Since
\[
\frac{1}{|A_n|} \int_{A_n} f(s) \overline{g(s-t_1)} \dd s
\]
is a bounded sequence in $\CC$, there exists some increasing sequence $k(n,1)$ of natural numbers such that
\[
\frac{1}{|A_{k(n,1)}|} \int_{A_{k(n,1)}} f(s) \overline{g(s-t_1)} \dd s
\]
converges.

Inductively, by the same argument, for each $j \geq 2$ we can construct a subsequence $k(n,j)$ of $k(n,j-1)$ such that
\[
\frac{1}{|A_{k(n,j)}|} \int_{A_{k(n,j)}} f(s) \overline{g(s-t_j)} \dd s
\]
converges.

A standard diagonalisation  argument gives a subsequence $\cB$ of $\cA$ such that
\[
\frac{1}{|B_n|} \int_{B_n} f(s) \overline{g(s-t_j)} \dd s
\]
converges for all $t \in D$. The claim follows now from Lemma~\ref{lem4}.
\end{proof}

\section{Construction of a unitary representation}\label{section-construction}
Let a  van Hove net $\cA$ be given.  We will be interested in subsets
$F$ of functions in $\Cu (G)$ with the property that  $\lb f,
g\rb_{\cA}$ exists for all $f,g\in F$. In this section we show that
any such set gives rise to a unitary representation on a suitably
defined Hilbert space. This will then be applied to sets $M$ of
translation bounded measures with the property that $\lb
\mu,\nu\rb_{\cA}$ exists for all $\mu,\nu\in M$.

We first characterize what
existence of all reflected Eberlein convolutions means for a set of
functions.

\begin{lemma}\label{Lem3}
Let $F \subseteq \Cu(G)$ and $\cA$ be a van Hove net. Then, the
following assertions are equivalent:
\begin{itemize}
\item[(i)] For all $f,g \in F$ the reflected Eberlein
convolution $\lb f,g \rb_{\cA}$ exists.

\item[(ii)] There exists set $B \subseteq \Cu(G)$ with the following properties:
\begin{itemize}
  \item{} $F \subseteq B$.
  \item{} $B$ is $G$-invariant.
  \item{} For all $f,g \in B$ the mean $M_{\cA}(f \overline{g})$ exists.
\end{itemize}
\end{itemize}
\end{lemma}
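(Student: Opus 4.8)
The direction (ii) $\Rightarrow$ (i) is the easy one, so I would dispatch it first. Given $B$ as in (ii), take $f,g \in F \subseteq B$ and $t \in G$. Since $B$ is $G$-invariant, $\tau_t g \in B$, hence the mean $M_\cA(f\,\overline{\tau_t g})$ exists by the third bullet applied to the pair $f, \tau_t g \in B$. As this holds for every $t \in G$, the reflected Eberlein convolution $\lb f,g\rb_\cA$ exists by definition. This is a two-line argument and needs no further comment.

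For (i) $\Rightarrow$ (ii) the plan is to exhibit an explicit candidate for $B$. The natural choice is the smallest $G$-invariant subspace (or just subset) of $\Cu(G)$ containing $F$ on which all the relevant means exist: concretely, let $V$ be the linear span of $\{\tau_t f : f \in F,\ t \in G\}$, and propose $B := V$. By construction $F \subseteq B$, and $B$ is $G$-invariant and a linear subspace. It remains to show that $M_\cA(h\,\overline{k})$ exists for all $h,k \in B$. By bilinearity (linearity in the first slot, conjugate-linearity in the second) of $(h,k) \mapsto M_\cA(h\bar k)$ wherever it is defined, it suffices to check this on generators, i.e. to show $M_\cA\bigl((\tau_s f)\,\overline{\tau_t g}\bigr)$ exists for all $f,g \in F$ and $s,t \in G$. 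Now $M_\cA\bigl((\tau_s f)\,\overline{\tau_t g}\bigr) = M_\cA\bigl(\tau_s(f\,\overline{\tau_{t-s} g})\bigr)$, and since the mean $M_\cA$ is $G$-invariant (translation does not change the mean, as recorded just after the definition of $M_\cA$), this equals $M_\cA\bigl(f\,\overline{\tau_{t-s} g}\bigr) = \lb f,g\rb_\cA(t-s)$, which exists by hypothesis (i). Hence $M_\cA(h\bar k)$ exists on all of $B$, completing the verification.

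The only genuine subtlety — and the step I would be most careful about — is the passage ``by bilinearity it suffices to check generators.'' One must argue that if $h = \sum_{j} a_j\, \tau_{s_j} f_j$ and $k = \sum_{l} b_l\, \tau_{t_l} g_l$ are finite linear combinations of generators, then $M_\cA(h\bar k) = \sum_{j,l} a_j \overline{b_l}\, M_\cA\bigl((\tau_{s_j} f_j)\,\overline{\tau_{t_l} g_l}\bigr)$, the point being that each summand's mean exists by the computation above, and a finite sum of convergent nets converges to the sum of the limits; this forces the mean of $h\bar k$ to exist. This is routine but is the crux of why the argument closes. A minor formal point to get right is that $\tau_t f \in \Cu(G)$ whenever $f \in \Cu(G)$, so that $B$ really is a subset of $\Cu(G)$ as required — this is immediate since translation is an isometry of $(\Cu(G),\|\cdot\|_\infty)$ preserving uniform continuity.
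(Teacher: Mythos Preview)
Your proof is correct and uses essentially the same key computation as the paper: translation invariance of the mean reduces $M_\cA\bigl((\tau_s f)\,\overline{\tau_t g}\bigr)$ to $\lb f,g\rb_\cA(t-s)$. The only difference is that the paper takes $B := \{\tau_t f : f \in F,\ t \in G\}$ itself rather than its linear span; since the statement only asks for a \emph{set} $B$, this already suffices and sidesteps the bilinearity extension entirely (the paper then remarks that this $B$ is in fact the smallest set with the required properties).
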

\begin{proof} (ii)$\Longrightarrow$(i):  For all $f,g \in F$  and $t\in G$ we have $f, \tau_tg \in B$ and
hence
\[
M(f \overline{\tau_tg})
\]
exists. This shows that $\lb f,g \rb$ exists.

(i)$\Longrightarrow$(ii): Define
\[
B_F:= \{ \tau_t f : f \in F, t \in G \} \,.
\]
We show that $B_F$ has the desired properties.

Let $f,g \in B_F$. Then, exist some $f_1,g_1 \in F$ and $t,s \in G$
so that
\[
f= \tau_t f_1 \,;\, g=\tau_s g_1 \,.
\]
Then, by invariance of the mean and the definition of the reflected
Eberlein convolution  we have
\[
M_{\cA}(f \bar{g})=M_{\cA}(\tau_tf_1
\overline{\tau_sg_1})=M_{\cA}(f_1 \overline{\tau_{s-t}g_1})=\lb
f_1,g_1 \rb_{\cA}(s-t)
\]
exists for all $f,g \in B_F$.

It is clear that $F \subseteq B_F$ and, by definition $B_F$, is $G$
invariant.
\end{proof}

\begin{remark}
We note that in the proof of Lemma~\ref{Lem3}, the set $B_F$ is the smallest $B$ satisfying the stated  conditions.
\end{remark}

Consider now a van Hove net  $\cA$ and let
 $F \subseteq \Cu(G)$ be given such that $ \lb f, g\rb_{\cA}$ exists
for all $f,g\in F$. We then define $H_F$ to be the linear span of
translates of elements from $F$, i.e.
\[
H_F:=\mbox{Span}\{ \tau_t f : t\in G, f \in F\} \,.
\]
From the assumption on existence of the reflected Eberlein
convolutions  we then find that
\[
\langle f, g\rangle := M_A (f\overline{g})
\]
exists for all $f,g\in H_F$. Clearly, $\langle \cdot,\cdot\rangle$
gives a semi-inner product on $H_F$. We denote by $\cH_F$ the
Hilbert space completion of $(H_F,\langle \cdot,\cdot\rangle)$.
By a slight abuse of notation we denote the inner product on $\cH_F$ by
$\langle \cdot,\cdot\rangle$ as well. Whenever $F$ consists just of
a single element $f$ we write $\cH_f$ instead of $\cH_{\{f\}}$.

Clearly, if $F$ and $F'$ are subsets of $\Cu(G)$ with $F\subseteq F'$ there is a canonical isometric embedding $\cH_F\hookrightarrow \cH_{F'}$ extending the embedding $F \hookrightarrow F', f \mapsto f$.

\begin{theorem}[The unitary representation $T^{F,\cA}$ induced by $F$]
\label{thm-unitary-rep-functions}
 Let $\cA$ be a van Hove net. Let $F \subseteq
\Cu(G)$ be such that $\lb f,g\rb_{\cA}$ exists for all $f,g\in F$.
Then, for every $t\in G$, the  translation operator $\tau_t : \Cu(G)
\to \Cu(G)$ induces a unitary operator
\[
T_t = T_t^{F,\cA} :\cH_F\longrightarrow \cH_F \,.
\]
 The family $T= T^{F,\cA} = (T_t^{F,\cA})_{t\in G}$ is a
representation of $G$, i.e. satisfies
\[
T_0 = \mbox{identity}  \mbox{ and }  T_{t+s} = T_t \circ T_s
\]
for all $t,s\in G$. This  representation is strongly continuous, i.e.  the map
\[
G\longrightarrow \cH_F, t\mapsto T_t f\,,
\]
is continuous for each $f\in \cH_F$.
\end{theorem}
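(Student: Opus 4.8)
The plan is to establish the three claimed properties of $T^{F,\cA}$ in order: first that each $\tau_t$ descends to a well-defined isometry $T_t$ of $\cH_F$, then that it is in fact unitary and that $T$ is a group representation, and finally that $T$ is strongly continuous. The key observation throughout is Lemma~\ref{thm:rebe props}(b), or rather its functional counterpart: the mean $M_\cA$ is $G$-invariant, so for $f,g \in H_F$ we have $\langle \tau_t f, \tau_t g\rangle = M_\cA(\tau_t f\, \overline{\tau_t g}) = M_\cA(f\overline{g}) = \langle f,g\rangle$. This is the engine behind every step.

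First I would check that $T_t$ is well defined on $H_F$ and isometric there. Since $H_F$ is the linear span of translates of elements of $F$, it is manifestly $G$-invariant, so $\tau_t$ maps $H_F$ into $H_F$; by the invariance computation above it preserves the semi-inner product, hence in particular preserves the semi-norm, so it passes to the quotient by the null space and extends by continuity (uniform continuity of an isometry) to a well-defined isometric operator $T_t : \cH_F \to \cH_F$. Next, the representation identities: on $H_F$ we plainly have $\tau_0 = \mathrm{id}$ and $\tau_{t+s} = \tau_t \circ \tau_s$, and since $\cH_F$-bounded operators are determined by their action on the dense subspace $H_F$, these identities persist to $\cH_F$, giving $T_0 = \mathrm{id}$ and $T_{t+s} = T_t T_s$. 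From $T_t T_{-t} = T_{-t} T_t = T_0 = \mathrm{id}$ we conclude each $T_t$ is invertible with inverse $T_{-t}$; being an invertible isometry on a Hilbert space, $T_t$ is unitary.

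For strong continuity it suffices, by the usual $3\varepsilon$-argument, to prove continuity of $t \mapsto T_t f$ at each point for $f$ in the dense subspace $H_F$ (the operators $T_t$ being uniformly bounded, indeed isometric), and by the group law it suffices to prove continuity at $t=0$, i.e.\ $\|T_t f - f\|_{\cH_F} \to 0$ as $t \to 0$. By linearity one further reduces to the case $f \in F$, i.e.\ $f \in \Cu(G)$. Now expand
\[
\|T_t f - f\|^2_{\cH_F} = \langle \tau_t f - f, \tau_t f - f\rangle = M_\cA(|\tau_t f|^2) - M_\cA(\tau_t f\,\overline{f}) - M_\cA(f\,\overline{\tau_t f}) + M_\cA(|f|^2),
\]
and use invariance of the mean to rewrite $M_\cA(|\tau_t f|^2) = M_\cA(|f|^2)$, so the expression becomes $2\,\re\big(M_\cA(|f|^2) - M_\cA(\tau_t f\,\overline f)\big)$; since $M_\cA(|f|^2 - \tau_t f\,\overline f) = M_\cA\big((f - \tau_t f)\overline f\big)$ this is bounded in absolute value by $\|f - \tau_t f\|_\infty \|f\|_\infty$ (the mean of a bounded function is dominated by its sup-norm — this should already be recorded, or follows immediately from the definition of $M_\cA$ as a limit of averages). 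Uniform continuity of $f \in \Cu(G)$ gives $\|f - \tau_t f\|_\infty \to 0$ as $t \to 0$, and we are done.

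The main obstacle is largely bookkeeping: one must be careful that $M_\cA(f\overline g)$ is well defined for \emph{all} $f,g \in H_F$ (not merely for $f,g \in F$ or translates thereof), which is exactly the content of Lemma~\ref{Lem3} applied to $B = B_F$ together with bilinearity of the mean, and that extending $T_t$ from $H_F$ to the completion $\cH_F$ is legitimate — standard, since isometries are uniformly continuous and $H_F$ is dense. The only genuinely analytic input is the estimate $|M_\cA(h)| \le \|h\|_\infty$ together with uniform continuity of elements of $\Cu(G)$; everything else is formal manipulation of the $G$-invariance of the mean.
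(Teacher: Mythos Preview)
Your proof is correct and follows essentially the same approach as the paper: invariance of the mean gives that $\tau_t$ preserves the semi-inner product on $H_F$, hence extends to an isometry on $\cH_F$; the group identities and surjectivity follow from $\tau_{t+s}=\tau_t\tau_s$, $\tau_0=\mathrm{id}$; and strong continuity is reduced via density and a $3\varepsilon$-argument to an estimate on $H_F$ controlled by $\|f-\tau_t f\|_\infty$. The only cosmetic differences are that the paper bounds $\|T_t f - f\|^2 = M_\cA(|\tau_t f - f|^2) \le \|\tau_t f - f\|_\infty^2$ directly rather than expanding the square, and that your reduction ``to $f\in F$'' should strictly speaking be to translates of elements of $F$ --- but since the estimate only uses $f\in\Cu(G)$, which holds for every element of $H_F$, this is immaterial.
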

\begin{proof} Set $H := H_F$ and  $\cH:=\cH_F$.

We show first that $\tau_t$ gives rise to a unique unitary operator:  Let $t \in G$ be arbitrary. Then, for all $f,g \in
H_F$ we have
\begin{equation}\label{eq1}
 \langle \tau_t  f, \tau_t g \rangle = M(\tau_tf
\overline{\tau_tg})= M(f \bar{g})= \langle  f, g \rangle
\end{equation}
 by
invariance of the mean. This means that $\tau_t: H \to H \subseteq
\cH$ is a bounded operator and hence it can uniquely be extended to
a bounded operator
\[
T_t: \cH \to \cH \,.
\]
Now, \eqref{eq1} and the denseness of $H$ in $\cH$ imply that $T_t$
is inner product preserving. Moreover,
$$\tau_{t} \circ \tau_{-t}=
\mbox{identity}$$ carries to $T_t$ and hence each $T_t$ is onto. This
shows that $T_t$ is an unitary operator for each $t$.

From $\tau_t\circ \tau_s = \tau_{t+s}$ and $\tau_0 =
\mbox{identity}$ we immediately infer that $T$ is a representation.

It remains to show the statement on strong continuity:  We first consider $f\in H$. Let $t\in G$ be fixed. Then, with respect to the semi-norm $\|\cdot\|$ induced by
$\langle ., . \rangle$ we have
\[
\| T_s f - T_t f\|^2= M_{\cA}( |\tau_s f - \tau_t f|^2) \leq \|\tau_s f - \tau_t f \|_{\infty}^2 =\|\tau_{s-t} f - f\|_\infty^2\,.
\]
Now, as any $f\in \Cu(G)$ is uniformly continuous, the term
$\|\tau_{s-t} f- f\|_\infty$ goes to zero for $s\to t$. This gives
the desired continuity for $f\in H$.  The case of general $f\in \cH$
can then be treated by denseness of $H$ in $\cH$ as follows:
Consider  $h \in \cH$. Let $\eps >0$. Then, there exists some $f \in
H$ such that $\| f-h \| < \eps$. By the above, there exists some
open set $0 \in U \subseteq G$ such that $t \in U $ implies $ \|
\tau_t f -f\| < \frac{\eps}{3}$. Then , for all $t \in U$ we have
\begin{align*}
  \| T_t h-h \| &\leq  \| T_t h-T_t f \|+ \| T_t f-f \|+ \|f-h\| =  \|  h- f \|+ \| \tau_t f-f \|+ \|f-h\|  \\
   &< \frac{\eps}{3}+\frac{\eps}{3}+\frac{\eps}{3} = \eps \,.
\end{align*}
This finishes the proof.
\end{proof}

By Stone Theorem and the previous theorem there exists  a (unique) map
\[
E : \mbox{Borel sets of $\widehat{G}$} \longrightarrow \mbox{
Projections on $\cH_F$}\]
with
\begin{itemize}
\item  $E(\emptyset) = 0$, (\textit{Non-degenerate});
\item  $E(\cup_n B_n) = \lim_N \oplus_{n=1}^N E(B_n)$ whenever $B_n$, $n\in\NN$,  are mutually disjoint measurable subsets of $\widehat{G}$, (\textit{$\sigma$-additive});
\item $\langle f,  T_t f\rangle   = \widecheck{\varrho_f}(t)$ for all $t\in G$.
 \end{itemize}

Here, the measure $\varrho_f$ is
defined by
\[
\varrho_f (B) = \langle f, E(B)f\rangle
\]
and called the \textit{spectral measure} of $f$. It is  uniquely
determined by
\[
 \langle f,T_t f \rangle =  \widecheck{\varrho_f}(t)
\]
for all $t\in G$. As $E$ takes values in the
projections we have
$$\varrho_f (B) =\langle f, E(B)f\rangle  = \langle E(B)f, E(B) f\rangle  = \|E(B)
f\|^2$$ for all $f\in \cH_F$ and any measurable $B\subset
\widehat{G}$. It is well-known that $E$ satisfies
\[
E(A)E(B) = E(A\cap B)
\]
for all measurable subsets $A,B\subset \widehat{G}$
as well as $E(\widehat{G}) = \mbox{Id}$. Indeed, the first equality follows easily from the additivity property of $E$. The second equality follows as $E(\widehat{G})$ is a projection with
\[
\|E(\widehat{G}) f\|^2 = \varrho_f (\widehat{G}) = \int_{\widehat{G}} 1 \dd
\varrho_f  = \langle f, T_0 f\rangle = \|f\|^2
\]
for all $f$.

\begin{remark} Some people prefer the version of the Stone Theorem where translation appear in the first (the linear) component, i.e.
\[
\langle T_t f,   f\rangle   = \widecheck{\varrho_f}(t) \,.
\]
To use this version, one would need to replace the operator $T_t$ induced by the left shift $\tau_t(f)=f(t-\cdot)$ with the operator $S_t=T_{-t}$ induced by the right shift on functions.
This change would induce a reflection of the spectral measure.
\end{remark}

We gather a few properties of  spectral measures next. This
clarifies in particular the relationship between a spectral measure
and the reflected Eberlein convolution.

 As usual we write $\mu \perp  \nu$ whenever $\mu$ and $\nu$ are positive measures on the same measurable space $X$ for which there exist measurable disjoint subsets $A,B$ of $X$  with $\mu(X\setminus A) = 0 = \nu(X\setminus B)$. The measures $\mu$ and $\nu$ are then called \textit{mutually singular}.

\begin{lemma}[Properties of spectral measures]\label{lemma2} Consider
 a van Hove net $\cA$ and  $F \subseteq \Cu(G)$  such that
$\lb f,g\rb_{\cA}$ exists for any $f,g\in F$ and let  $T =
T^{F,\cA}$ be the associated unitary representation. Then,
\begin{itemize}
\item[(a)] $\varrho_f = \varrho_{T_s f}$ for any $f\in\cH$ and $s\in G$.
\item[(b)] The spectral measure $\varrho_f$ of $f \in F$ satisfies
  \[
  \widecheck{\varrho_f}(t) = \lb f,f \rb_{\cA}(t) \qquad \mbox{ for all } t \in G \,.
  \]

  \item[(c)] Let $f,g \in F$ be given. Then $\lb f,g \rb_{\cA}=0$ if and only if $\cH_f \perp \cH_g$.
      \item[(d)] Let $f\in\cH$ be given. Then $f = E( A) f$ whenever $A\subset \widehat{G}$ satisfies  $\varrho_f (\widehat{G}\setminus A) =0$.

            \item[(e)] If $f,g \in \cH$ satisfy  $\varrho_f \perp \varrho_g$
                     then $\langle f,g\rangle = 0$.
                        \item[(f)] If $f,g \in F$ satisfy  $\varrho_f \perp \varrho_g$
                     then $\lb f, g\rb_{\cA}=0$.

\end{itemize}
\end{lemma}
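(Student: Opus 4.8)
The plan is to establish (a)--(f) in the listed order, building everything on the elementary identity
\[
\lb f,g \rb_{\cA}(t)=M_{\cA}(f\,\overline{\tau_t g})=\langle f,T_t g\rangle\qquad(f,g\in F,\ t\in G),
\]
which follows at once from the definition of the reflected Eberlein convolution of functions, from the definition of the semi-inner product $\langle\cdot,\cdot\rangle=M_{\cA}(\cdot\,\overline{\cdot})$ on $H_F$, and from the fact that $T_t$ restricts to $\tau_t$ on $H_F$. Part (b) is then nothing more than this identity with $g=f$, compared with the defining property $\langle f,T_t f\rangle=\widecheck{\varrho_f}(t)$ of the spectral measure.

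For (a) I would avoid invoking the commutation of $T_s$ with the spectral projections and argue instead from uniqueness: since $T$ is a representation of the abelian group $G$ and each $T_s$ is unitary,
\[
\langle T_s f,\,T_t(T_s f)\rangle=\langle T_s f,\,T_s(T_t f)\rangle=\langle f,T_t f\rangle=\widecheck{\varrho_f}(t)\qquad(t\in G),
\]
so $\widecheck{\varrho_{T_s f}}=\widecheck{\varrho_f}$ on $G$, whence $\varrho_{T_s f}=\varrho_f$ by uniqueness of the spectral measure. For (c) I would use that, through the canonical isometric embedding $\cH_f\hookrightarrow\cH_F$ (available since $\{f\}\subseteq F$) and likewise for $g$, the subspace $\cH_f$ is the closed linear span of $\{T_t f:t\in G\}$ in $\cH_F$ and $\cH_g$ that of $\{T_s g:s\in G\}$. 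Thus $\cH_f\perp\cH_g$ iff $\langle T_t f,T_s g\rangle=0$ for all $t,s\in G$; since $T_t$ is unitary with $T_t^{*}=T_{-t}$, this says $\langle f,T_{s-t} g\rangle=0$ for all $t,s$, i.e.\ $\langle f,T_r g\rangle=0$ for all $r\in G$, which by the opening identity is exactly $\lb f,g \rb_{\cA}=0$.

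For (d)--(f) I would use only the abstract properties of $E$ recorded in the construction: $E(\widehat{G})=\mathrm{Id}$, $E(\emptyset)=0$, additivity, $E(A)E(B)=E(A\cap B)$, self-adjointness of each $E(B)$, and $\|E(B)f\|^2=\varrho_f(B)$. For (d): if $\varrho_f(\widehat{G}\setminus A)=0$, then writing $\mathrm{Id}=E(A)+E(\widehat{G}\setminus A)$ yields $\|f-E(A)f\|^2=\|E(\widehat{G}\setminus A)f\|^2=\varrho_f(\widehat{G}\setminus A)=0$. For (e): choose disjoint measurable $A,B\subseteq\widehat{G}$ with $\varrho_f(\widehat{G}\setminus A)=0=\varrho_g(\widehat{G}\setminus B)$; by (d), $f=E(A)f$ and $g=E(B)g$, hence $\langle f,g\rangle=\langle E(A)f,E(B)g\rangle=\langle f,E(A\cap B)g\rangle=\langle f,E(\emptyset)g\rangle=0$. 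For (f), with $f,g\in F$ and $\varrho_f\perp\varrho_g$: part (a) gives $\varrho_{T_t g}=\varrho_g\perp\varrho_f$, so (e) yields $\langle f,T_t g\rangle=0$ for every $t\in G$, i.e.\ $\lb f,g \rb_{\cA}=0$.

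None of the steps is technically demanding; the one point requiring a little care --- and the nearest thing to an obstacle --- is the identification used in (c), namely that the canonical embedding $\cH_f\hookrightarrow\cH_F$ really realizes $\cH_f$ as the closure of $\mathrm{Span}\{\tau_t f:t\in G\}$ inside $\cH_F$, so that ``$\cH_f\perp\cH_g$'' is the orthogonality of two bona fide closed subspaces of a single Hilbert space. This is precisely what the embeddings $\cH_F\hookrightarrow\cH_{F'}$ were introduced to furnish, so no additional argument is needed.
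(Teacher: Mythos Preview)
Your proof is correct and follows essentially the same route as the paper's own argument: both rest on the identity $\lb f,g\rb_{\cA}(t)=\langle f,T_t g\rangle$ for $f,g\in F$, derive (a) from uniqueness of the spectral measure via $\langle T_s f,T_t T_s f\rangle=\langle f,T_t f\rangle$, and handle (d)--(f) through the decomposition $\mathrm{Id}=E(A)+E(\widehat{G}\setminus A)$ together with $E(A)E(B)=E(A\cap B)$. Your treatment of (c) is a bit more explicit about the identification of $\cH_f$ as a closed subspace of $\cH_F$, but this is exactly what the paper intends when it writes ``the claim follows immediately.''
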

\begin{proof}
\textbf{(a)} The characteristic feature of $\varrho_{T_s f}$ is that
\[
\int_{\widehat{G}}  \xi (t) \dd\varrho_{T_s f} ( \xi) = \langle T_s f, T_t T_s f\rangle
\]
holds for all $t\in G$. The characteristic feature of $\varrho_f$ is that
\[
\int_{\widehat{G}}  \xi (t)\dd\varrho_f(\xi) = \langle f, T_t f\rangle
\]
holds for all $t\in G$. As $T$ is unitary, we have $\langle T_s f, T_t T_s f\rangle = \langle f, T_t f\rangle$. Hence, the two spectral measures agree.

\textbf{(b)} For all $f \in F$ we have
\[
\widecheck{\varrho_f}(t)= \langle f, T_t f \rangle \stackrel{f \in
F}{=\joinrel =\joinrel =\joinrel =} M(f \overline{\tau_{t}f}) =\lb
f,f \rb_{\cA} (t) \,.
\]

\textbf{(c)}  For all $f,g \in F$ we have
\[
\lb f,g \rb (t)= M(f \overline{\tau_{t}g})
\stackrel{f,g \in F}{=\joinrel =\joinrel =\joinrel =}   \langle f,
T_t g \rangle \,.
\]
The claim follows immediately.

\textbf{(d)} We have $f = E(\widehat{G}) f = E(A) f+
E(\widehat{G}\setminus A) f$. Now, by assumption on $A$, we also
have
\[
\|E(\widehat{G}\setminus A)f \|^2 = \varrho_f (\widehat{G}\setminus
A) =0 \,.
\]
Put together this yields $f = E( A) f$.

\textbf{(e)}   Let $A,B$ be disjoint measurable subsets of $\widehat{G}$ with
\[
\varrho_f (\widehat{G}\setminus A) =0 = \varrho_g (\widehat{G}\setminus B)\,,
\]
Then, $f = E(A)f$ and $ g = E(B) g$ by (d). Hence, we can calculate
\[
\langle f, g\rangle = \langle E(A) f, E(B) g\rangle= \langle E(B) E(A) f,g\rangle = 0 \,,
\]
where we used   $E(A) E(B) = E(A\cap B) = E(\emptyset) =0$ to obtain the last equality.

\textbf{(f)} By definition we have  $ \lb f, g\rb_{\cA} (s) = \langle f, T_s g\rangle$. Now,  by (a) the spectral measure of $T_s g$ agrees with the spectral measure of $g$ and the desired statement follows from (e).
\end{proof}

The converse of (e) is not necessarily
true as shown in the next example:

\begin{example}[Converse of (e) does not hold]\label{ex1}  Let  $f$ be
\[
f(x)=
\left\{
\begin{array}{cc}
  -1 & \mbox{ if } x <-1 \\
  x & \mbox{ if } -1 \leq x \leq 1 \\
  1 & \mbox{ if } x>1
\end{array}
\right. \,.
\]
and let $g=1$ be the constant function $1$.
Set
\[
F:= \{ \tau_t f: t \in \RR\}\cup\{g\} \,,
\]
which is $G$ invariant. Then, by Lemma~\ref{lemma2} we have
\[
\widecheck{\varrho_f}(t) = \lb f,f \rb_{\cA}(t) = M(f \overline{\tau_t f}) =1 \qquad \mbox{ for all }  t \in \RR \,,
\]
with the last equality following from the fact that
\[
f(x) \overline{\tau_tf(x) } =1 \mbox{ for all }  x \notin [-|t|-1, |t|+1] \,.
\]
Also, for all $t \in G$ we have
\[
\widecheck{\varrho_g}(t) = \lb g,g \rb_{\cA}(t) = M(g
\overline{\tau_t g }) =1 \qquad  \mbox{ for all } t \in \RR \,.
\]
It follows that
\[
\sigma_{f}=\sigma_{g}= \delta_0 \,.
\]
On another hand, for all $t \in \RR$ we have
\[
\lb f, g \rb_{\cA}(t)= M(f \overline{\tau_{t}g})=M(f)=0 \,.
\]
\end{example}

\begin{remark}[Orthogonality of $\cH_f$ and $\cH_g$]
Let $F \subseteq \Cu(G)$ be given such that $\lb f,g\rb_{\cA}$ exists for all  $f,g \in F$. Then we have
\[
\cH_{\{ f,g \}} = \cH_f+ \cH_g \subseteq \cH_{F} \,.
\]
In many situations, $\cH_{f}$ and $\cH_{g}$ are not disjoint
subspaces, and then the sum above is not an orthogonal  sum. Indeed,
for example, when $g =\tau_t f$ for some $t \in G$ we trivially have
$\cH_f=\cH_g$. However,  Theorem~\ref{lemma2} gives the following
implications
\[
\bigl( \sigma_f \perp \sigma_{g} \bigr) \Longrightarrow \bigl( \lb f,g \rb_{\cA}=0 \bigr)\Longleftrightarrow \bigl( \cH_{f} \perp \cH_{g} \bigr) \,.
\]
Of course, if $\cH_f\perp \cH_g$ then  we trivially have $\cH_{\{ f,g \}} = \cH_f \oplus \cH_g$.
\end{remark}

We also note the following example.

\begin{example}[The space of Besicovitch-2-functions]\label{ex-bes} Let $F
=\widehat{G}$ be the dual group of $G$. Then, the linear span of $A$
is invariant under translations and for all $\xi,\eta\in
\widehat{G}$ the mean $M_\cA (\xi \overline{\eta})$ exists and
equals $0$ if $\xi \neq \eta$ and equals $1$ if $\xi = \eta$. Thus,
the elements of $\widehat{G}$ form an orthonormal basis of
$\cH_{\widehat{G}}$. The space $\cH_{\widehat{G}}$ is usually
denoted by $Bap^2_{\cA}(G)$, and is called the space of Besicovitch 2-almost periodic functions.
\end{example}

\medskip

We now turn to an application of the preceding considerations to
measures.

Let $M \subseteq \cM^\infty(G)$ be given  and let  $\cA$ be  van
Hove net and assume that  for all $\mu, \nu \in M$ the reflected
Eberlein convolution $\lb \mu, \nu \rb_{\cA}$ exists. Let us note
here in passing that this implies that the autocorrelation
$\gamma_{\mu}= \lb \mu , \mu \rb_{\cA}$ of each $\mu \in M$ exists
with respect to $\cA$. Define for $M$ the set
\[
F_M:=\{ \mu\ast\varphi : \mu \in M, \varphi \in \Cc(G) \}\subset \Cu
(G) \,.
\]
By the assumption on $M$ and Lemma \ref{lemma-relationsship} the
reflected Eberlein convolution $\lb f,g\rb_{\cA}$ exists then for
all $f,g\in F_M$. So, we can apply the preceding considerations. In
particular, $H_M:= H_{F_M}$ carries a  unique semi inner product
with
\[
\langle \mu \ast\varphi, \nu\ast \psi\rangle = M_{\cA} (\mu\ast
\varphi \; \overline{\nu \ast \psi})
\]
for all $\varphi,\psi\in\Cc
(G)$. The Hilbert space completion of $H_M$ will be denoted by
$\cH_M$.

If $\mu$ is a translation bounded measure such that
$\lb\mu,\mu\rb_{\cA}$ exists we  can apply the previous considerations to
 $M = \{ \mu \}$. In this case, we
simply write $\cH_\mu:= \cH_{\{ \mu \}}$. Note that then the set
\[
C:=\{\mu \ast \varphi : \varphi \in \Cc (G)\}
\]
is already a subspace of $\Cu (G)$ and  invariant under
translations. Hence, it is dense in $\cH_\mu$. Note also that for
$\mu $ in some subset $M$ of translation bounded measures such that
$\lb\mu,\nu\rb_{\cA}$ exists for all $\mu,\nu \in M$, the space
$\cH_{\mu}$ can be simply identified with the Hilbert subspace of
$\cH_{M}$ arising as the closure of $\{ \mu\ast\varphi : \varphi \in
\Cc(G) \}$.

\begin{theorem}\label{T2}
Let $M$ be a set of translation bounded measures  and $\cA$ a  van Hove net with the property that for all $\mu, \nu \in M$ the
reflected Eberlein convolution $\lb \mu, \nu \rb_{\cA}$ exists.
There exists a unique family $T :=T^M := T^{F_M,\cA}$ of unitary
operators on $\cH_M$ with
\[
T_t (\mu \ast \varphi) = \mu \ast (\tau_t \varphi)
\]
for all $\mu\in M$ and $\varphi \in\Cc (G)$.
For  all $\varphi \in
\Cc(G)$ and $\mu \in M$ we have
  \[
 \varrho_{\varphi\ast\mu} = \left| \widehat{\varphi} \right|^2 \reallywidehat{\gamma_{\mu}} \,.
  \]
\end{theorem}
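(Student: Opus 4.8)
The plan is to read off the unitary family directly from Theorem~\ref{thm-unitary-rep-functions} applied to $F = F_M$, and then to identify the spectral measure by a short computation based on Lemma~\ref{lemma-relationsship} and Proposition~\ref{char-ft-gamma}. First I would observe that, by the hypothesis on $M$ together with Lemma~\ref{lemma-relationsship}, the reflected Eberlein convolution $\lb f,g \rb_{\cA}$ exists for all $f,g\in F_M$; hence Theorem~\ref{thm-unitary-rep-functions} applies to $F = F_M$ and yields a strongly continuous unitary representation $T = T^{F_M,\cA}$ on $\cH_{F_M} = \cH_M$. Since $F_M$ is itself $G$-invariant, $H_M = H_{F_M}$ is the linear span of $F_M$, on which each $T_t$ acts simply as $\tau_t$; combining this with the identity $\tau_t(\mu\ast\varphi) = \mu\ast(\tau_t\varphi)$ recorded in Section~\ref{section-setting} gives $T_t(\mu\ast\varphi) = \mu\ast(\tau_t\varphi)$ for all $\mu\in M$, $\varphi\in\Cc(G)$ and $t\in G$. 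Uniqueness is immediate: the elements $\mu\ast\varphi$ span the dense subspace $H_M$ of $\cH_M$, and bounded operators coinciding on a dense set coincide.

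For the spectral measure, fix $\mu\in M$ and $\varphi\in\Cc(G)$, and recall that $\varrho_{\varphi\ast\mu}$ is the unique finite positive measure on $\widehat{G}$ with $\langle \varphi\ast\mu, T_t(\varphi\ast\mu)\rangle = \widecheck{\varrho_{\varphi\ast\mu}}(t)$ for all $t\in G$. By the first part $T_t(\varphi\ast\mu) = \mu\ast(\tau_t\varphi)$, so using the definition of the inner product on $H_M$ together with the formula in Lemma~\ref{lemma-relationsship} (applied with $\nu=\mu$ and $\psi=\varphi$) one gets
\begin{align*}
\langle \varphi\ast\mu, T_t(\varphi\ast\mu)\rangle &= M_{\cA}\bigl(\mu\ast\varphi\,\cdot\,\overline{\mu\ast\tau_t\varphi}\bigr) \\
&= \lb \mu,\mu \rb_{\cA}\ast\varphi\ast\widetilde{\varphi}(t) = \gamma_{\mu}\ast\varphi\ast\widetilde{\varphi}(t) .
\end{align*}
Since $\gamma_{\mu}$ is positive definite by Lemma~\ref{thm:rebe props}(e), Proposition~\ref{char-ft-gamma} rewrites the right-hand side as $\int_{\widehat{G}}\xi(t)\,|\widehat{\varphi}|^2(\xi)\,\dd\reallywidehat{\gamma_{\mu}}(\xi)$. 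Put $\sigma := |\widehat{\varphi}|^2\,\reallywidehat{\gamma_{\mu}}$; this says $\langle \varphi\ast\mu, T_t(\varphi\ast\mu)\rangle = \widecheck{\sigma}(t)$ for all $t\in G$, and $\sigma$ is finite because $\sigma(\widehat{G}) = \int_{\widehat{G}}|\widehat{\varphi}|^2\,\dd\reallywidehat{\gamma_{\mu}} = \gamma_{\mu}\ast\varphi\ast\widetilde{\varphi}(0)<\infty$ by the very definition of $\reallywidehat{\gamma_{\mu}}$. Comparing with the characterising property of $\varrho_{\varphi\ast\mu}$ and invoking its uniqueness yields $\varrho_{\varphi\ast\mu} = \sigma = |\widehat{\varphi}|^2\,\reallywidehat{\gamma_{\mu}}$.

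I do not anticipate a genuine obstacle here: the argument merely threads together facts already in place (Lemma~\ref{lemma-relationsship}, Proposition~\ref{char-ft-gamma}, Theorem~\ref{thm-unitary-rep-functions}, and Lemma~\ref{thm:rebe props}(e)) in the right order. The one point deserving some care is the injectivity invoked in the final step — that two finite positive measures on $\widehat{G}$ with the same inverse Fourier transform must coincide — but this is exactly the uniqueness already built into the Stone-theorem description of spectral measures recalled immediately before this theorem.
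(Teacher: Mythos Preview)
Your proof is correct and follows essentially the same route as the paper: invoke Theorem~\ref{thm-unitary-rep-functions} for $F=F_M$ to obtain the unitary representation, and then compute the inverse Fourier transform of the spectral measure via Lemma~\ref{lemma-relationsship} and Proposition~\ref{char-ft-gamma}. The paper's version is slightly terser (it cites Lemma~\ref{lemma2}(b) for the identity $\widecheck{\varrho_f}=\lb f,f\rb_{\cA}$ rather than unpacking the inner product directly, and it does not spell out the finiteness of $|\widehat{\varphi}|^2\,\reallywidehat{\gamma_{\mu}}$ or the positive-definiteness of $\gamma_\mu$), but the substance is the same.
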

\begin{proof} By construction and Theorem \ref{thm-unitary-rep-functions}, the family
$T$ is a unitary representation  with
\[
T_t (\mu \ast \varphi) =\tau_t (\mu \ast \varphi) = \mu \ast (\tau_t \varphi)
\]
for all $\mu\in M$ and $\varphi \in\Cc (G)$.

It remains to show the statement on the spectral measure: By
Lemma~\ref{lemma2}(b), Lemma
 \ref{lemma-relationsship}, the definition of $\gamma_\mu$ and Proposition \ref{char-ft-gamma}
  we have
  \begin{align*}
\widecheck{\varrho_{\mu\ast \varphi}}(t) &= \lb \mu\ast
\varphi,\mu\ast \varphi\rb_{\cA} (t)  =  \lb \mu,\mu \rb_{\cA} \ast \varphi \ast
\widetilde{\varphi}(t) \\
&= \gamma_\mu \ast \varphi \ast \widetilde{\varphi}(t)
=  \int_{\widehat{G}} \xi(t) |\widehat{\varphi}(\xi)|^2 \dd
\widehat{\gamma_\mu}(\xi) \,.
\end{align*} As the spectral
measure is uniquely determined by its inverse Fourier transform we
infer the desired statement.
\end{proof}

\begin{remark}[Off-diagonal spectral measures]
The result on the spectral measures in the previous
theorem can be seen as the `diagonal' part of a more general
statement and be complemented by an `off diagonal' part.
Specifically, consider the situation of the theorem. Then, by
polarisation we can write $\gamma_{\mu,\nu}:=\lb \mu,\nu\rb_{\cA}$
as a linear combination
\[
\gamma_{\mu,\nu}=\lb \mu,\nu\rb_{\cA} = \frac{1}{4} \sum_{k=0}^3 i^k \lb \mu  + i^k \nu, \mu + i^k
\nu\rb_{\cA}\,.
\]
Also we can  define the measure
\[
\widehat{ \gamma_{\mu,\nu}  } :=\frac{1}{4}\sum_{k=0}^3 i^k \widehat{\gamma_{\mu +
i^k \nu} }\,.
\]
Then, by construction and Lemma~\ref{lemma-relationsship}, we have
\[
\int_{\widehat{G}} \widehat{\varphi}(\xi) \overline{\widehat{\psi}(\xi) }
\dd \widehat{ \gamma_{\mu,\nu} }(\xi) = \gamma_{\mu,\nu}\ast \varphi \ast
\widetilde{\psi}(0) = \langle \mu \ast \varphi, \nu \ast \psi\rangle
\]
 holds for all $\varphi,\psi \in\Cc (G)$. Replacing $\psi$ by
$\tau_{-t} \psi$ we can argue as in the proof of Proposition
\ref{char-ft-gamma} to obtain
\[
\int_{\widehat{G}} \xi (t)\widehat{\varphi}(\xi) \overline{\widehat{\psi}(\xi)}
\dd \widehat{\gamma_{\mu,\nu}}(\xi)  = \langle \mu \ast\varphi, T_t (\nu\ast
\psi)\rangle
\]
for all $t\in G$. In this sense, $\widehat{\varphi}
\overline{\widehat{\psi}} \widehat{\gamma_{\mu,\nu}}$ can be
considered as spectral measure to the pair $(\mu \ast\varphi,
\nu\ast \psi)$.
\end{remark}

We can also characterize orthogonality of subspaces generated by
different measures.
\begin{theorem}[Orthogonality]\label{T3} Let $M \subseteq \cM^\infty(G)$ be any set and $\cA$ be any van Hove sequence with the property that for all $\mu, \nu \in M$ the reflected Eberlein convolution
$\lb \mu, \nu \rb_{\cA}$ exists. Then, for
$\mu,\nu\in\M$, the following are equivalent:
\begin{itemize}
\item[(i)] $\lb \mu, \nu \rb_{\cA}=0$.
\item[(ii)] $\langle \mu \ast \varphi,\nu\ast \psi\rangle = 0$ for
all $\varphi,\psi \in\Cc (G)$.
\item[(iii)] $\cH_{\mu\ast \varphi}\perp \cH_{\nu\ast \psi}$ for all
$\varphi,\psi \in \Cc (G)$.
\item[(iv)] $\cH_{\mu} \perp \cH_{\nu}$.
\end{itemize}
\end{theorem}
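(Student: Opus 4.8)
The plan is to route all four conditions through (ii), proving the chain (i)$\Leftrightarrow$(ii), (ii)$\Leftrightarrow$(iv), and (i)$\Rightarrow$(iii)$\Rightarrow$(ii); the only non-formal part is deducing (i) from (ii).

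First the formal steps. The implication (iii)$\Rightarrow$(ii) is immediate, since $\mu\ast\varphi$ and $\nu\ast\psi$ lie in $\cH_{\mu\ast\varphi}$ and $\cH_{\nu\ast\psi}$ respectively, so orthogonality of these subspaces yields $\langle\mu\ast\varphi,\nu\ast\psi\rangle=0$. For (i)$\Rightarrow$(iii), Lemma~\ref{lemma-relationsship} gives $\lb\mu\ast\varphi,\nu\ast\psi\rb_{\cA}=\lb\mu,\nu\rb_{\cA}\ast\varphi\ast\widetilde\psi$, which is the zero function when (i) holds; since $\mu\ast\varphi,\nu\ast\psi\in F_M$ and $\lb f,g\rb_{\cA}$ exists on all of $F_M$ (again by Lemma~\ref{lemma-relationsship}), Lemma~\ref{lemma2}(c) then gives $\cH_{\mu\ast\varphi}\perp\cH_{\nu\ast\psi}$. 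For (ii)$\Leftrightarrow$(iv), recall that $\cH_\mu$ and $\cH_\nu$ are by construction the closures in $\cH_M$ of $\{\mu\ast\varphi:\varphi\in\Cc(G)\}$ and $\{\nu\ast\psi:\psi\in\Cc(G)\}$; hence $\cH_\mu\perp\cH_\nu$ holds if and only if $\langle\mu\ast\varphi,\nu\ast\psi\rangle=0$ for all $\varphi,\psi\in\Cc(G)$, which is precisely (ii). Finally, (i)$\Rightarrow$(ii) is the evaluation at $t=0$ of the identity just used: $\langle\mu\ast\varphi,\nu\ast\psi\rangle=\lb\mu,\nu\rb_{\cA}\ast\varphi\ast\widetilde\psi(0)=0$.

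It remains to prove (ii)$\Rightarrow$(i), which is the heart of the matter. The obstacle is that (ii) only records the value at $0$ of each reflected Eberlein convolution, whereas (i) is a statement about the whole measure $\lb\mu,\nu\rb_{\cA}$. I would first upgrade (ii) to a statement about functions: applying (ii) to the pair $(\varphi,\tau_t\psi)$ and using the formula $\lb\mu\ast\varphi,\nu\ast\psi\rb_{\cA}(t)=M_{\cA}(\mu\ast\varphi\;\overline{\nu\ast\tau_t\psi})=\langle\mu\ast\varphi,\nu\ast\tau_t\psi\rangle$ from Lemma~\ref{lemma-relationsship} shows that, for fixed $\varphi,\psi\in\Cc(G)$, the function $\lb\mu,\nu\rb_{\cA}\ast\varphi\ast\widetilde\psi=\lb\mu\ast\varphi,\nu\ast\psi\rb_{\cA}$ vanishes identically on $G$. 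Now $\gamma:=\lb\mu,\nu\rb_{\cA}$ is a translation bounded measure — it exists by hypothesis and lies in the vaguely compact set of Lemma~\ref{thm:rebe props}(a) — and the remaining task is to show that $\gamma\ast\varphi\ast\widetilde\psi=0$ for all $\varphi,\psi\in\Cc(G)$ forces $\gamma=0$. For this, fix $\varphi$ and let $\psi$ run through a real, nonnegative approximate identity $(\psi_j)$ with shrinking supports; then $\widetilde{\psi_j}$ is again an approximate identity, so $\gamma\ast\varphi\ast\widetilde{\psi_j}=(\gamma\ast\varphi)\ast\widetilde{\psi_j}\to\gamma\ast\varphi$ uniformly (recall $\gamma\ast\varphi\in\Cu(G)$), whence $\gamma\ast\varphi=0$ for every $\varphi\in\Cc(G)$. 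Since $\gamma\ast\varphi(0)=\gamma(\varphi^\dagger)$ and $\varphi\mapsto\varphi^\dagger$ is a bijection of $\Cc(G)$, this yields $\gamma=0$, i.e. $\lb\mu,\nu\rb_{\cA}=0$, completing the proof. The main difficulty, as indicated, is exactly this reconstruction of a translation bounded measure from the vanishing of its convolutions against the products $\varphi\ast\widetilde\psi$, for which the translation trick (to pass from values at $0$ to full functions) and the approximate-identity argument are both needed; every other step is a direct application of Lemmas~\ref{lemma-relationsship} and~\ref{lemma2}.
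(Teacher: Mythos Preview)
Your proof is correct and follows essentially the same route as the paper's. The paper runs the cycle (iv)$\Rightarrow$(iii)$\Rightarrow$(ii)$\Rightarrow$(iv) together with (ii)$\Leftrightarrow$(i), while you route things through (ii) in a slightly different order, but the substance is identical: density of $\{\mu\ast\varphi\}$ in $\cH_\mu$ for (ii)$\Leftrightarrow$(iv), and Lemma~\ref{lemma-relationsship} for (i)$\Leftrightarrow$(ii). The one place you do more work than the paper is (ii)$\Rightarrow$(i): the paper simply asserts that $\lb\mu,\nu\rb_{\cA}\ast\varphi\ast\widetilde\psi(0)=0$ for all $\varphi,\psi$ is equivalent to $\lb\mu,\nu\rb_{\cA}=0$, whereas you spell out the approximate-identity argument. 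Your translation step (passing from values at $0$ to the whole function) is not actually needed --- fixing $\varphi$ and sending $\psi$ through an approximate identity already gives $\gamma\ast\varphi(0)=\gamma(\varphi^\dagger)=0$ directly at $t=0$ --- but it does no harm.
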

\begin{proof} (iv)$\Longrightarrow$(iii): This is clear as
$\cH_{\mu \ast\varphi}$ is a subspace of $\cH_\mu$ and $\cH_{\nu\ast
\psi}$ is a subspace of $\cH_\nu$.

(iii)$\Longrightarrow$(ii): This is obvious.

(ii)$\Longrightarrow$(iv): By definition, the set $\{\mu \ast
\varphi : \varphi \in\Cc (G)\}$ is dense in $\cH_\mu$ and the set
$\{\nu\ast \varphi : \varphi \in\Cc (G)\}$ is dense in $\cH_\nu$.
This easily gives the implication (ii)$\Longrightarrow$(iv).

(ii)$\Longleftrightarrow$(i) By Lemma \ref{lemma-relationsship} and
the definition of $\langle \cdot,\cdot\rangle$,  condition (ii) is
equivalent to $\lb \mu,\nu\rb_{\cA} \ast \varphi \ast
\widetilde{\psi} (0) =0$ for all $\varphi,\psi \in\Cc (G)$. This in
turn is equivalent to  (i).
\end{proof}

\section{Diffraction as spectral measure}\label{section-main}

Starting from a measure $\mu$ with autocorrelation $\gamma$ we can
now construct explicitly a unitary representation of $G$ on the
Hilbert space $\cH_{\mu}$ whose spectrum is exactly
$\widehat{\gamma}$. We then go on and use this to characterize pure point diffraction  and then  discuss orthogonality features
with respect to this Hilbert space.

\begin{theorem}[Diffraction as a spectral
measure]\label{diff-spec} Let $\mu$ be a translation
bounded measure and let $\cA$ be a van Hove net such that the autocorrelation $\gamma_{\mu}$ of $\mu$ exists with
respect to $\cA$. Then, there exists a unique unitary map
\[
\Theta : \cH_\mu \longrightarrow L^2
(\widehat{G},\widehat{\gamma_\mu})
\]
such that $\Theta(\mu\ast \varphi)=\widehat{\varphi}$ for all $\varphi \in\Cc (G)$. This map satisfies
\[
\Theta \circ T_t = Z_t \circ \Theta \,,
\]
for all $t\in G$, where $Z_t$ is the operator on $L^2 (\widehat{G},\widehat{\gamma_\mu})$  defined by
\[
(Z_t f)(\xi) = \xi(t) f (\xi) \,.
\]
\end{theorem}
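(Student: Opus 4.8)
The plan is to define $\Theta$ first on the dense subspace $C = \{\mu \ast \varphi : \varphi \in \Cc(G)\}$ of $\cH_\mu$ by the prescribed formula $\Theta(\mu \ast \varphi) = \widehat{\varphi}$, check that this is a well-defined isometry, and then extend by continuity and surjectivity to all of $\cH_\mu$. For well-definedness and the isometry property simultaneously, I would compute, for $\varphi, \psi \in \Cc(G)$,
\[
\langle \mu \ast \varphi, \mu \ast \psi \rangle = M_\cA(\mu \ast \varphi \; \overline{\mu \ast \psi}) = \gamma_\mu \ast \varphi \ast \widetilde{\psi}(0) = \int_{\widehat{G}} \widehat{\varphi}(\xi) \overline{\widehat{\psi}(\xi)} \dd \widehat{\gamma_\mu}(\xi) = \langle \widehat{\varphi}, \widehat{\psi} \rangle_{L^2(\widehat{G}, \widehat{\gamma_\mu})},
\]
where the middle equalities use Lemma~\ref{lemma-relationsship} (the formula relating $\lb \mu \ast \varphi, \mu \ast \psi \rb_\cA$ to $\gamma_\mu \ast \varphi \ast \widetilde{\psi}$ evaluated at $0$), the definition of $\gamma_\mu$, and the polarised form of the identity in Proposition~\ref{char-ft-gamma}. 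In particular $\mu \ast \varphi = 0$ in $\cH_\mu$ forces $\widehat{\varphi} = 0$ in $L^2(\widehat{G}, \widehat{\gamma_\mu})$, so $\Theta$ is well defined on $C$, and the displayed chain shows it preserves inner products, hence extends to an isometry $\cH_\mu \to L^2(\widehat{G}, \widehat{\gamma_\mu})$.

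For surjectivity, it suffices to show the image of $\Theta$ is dense, i.e.\ that $\{\widehat{\varphi} : \varphi \in \Cc(G)\}$ is dense in $L^2(\widehat{G}, \widehat{\gamma_\mu})$. This is a standard fact: the Fourier transforms of $\Cc(G)$-functions separate points of $\widehat{G}$ and form a self-adjoint subalgebra-like family dense in $C_0(\widehat{G})$ in a suitable sense, and $C_0(\widehat{G})$ (or even $\Cc(\widehat{G})$) is dense in $L^2$ of any Radon measure on $\widehat{G}$; one can invoke that $\widehat{\gamma_\mu}$ is translation bounded and hence a Radon measure and apply the usual approximation argument. Since $\Theta$ is an isometry with dense range, it is unitary; uniqueness is immediate because $\Theta$ is prescribed on the dense set $C$.

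Finally, for the intertwining relation, it is enough to verify $\Theta \circ T_t = Z_t \circ \Theta$ on the dense set $C$. Using Theorem~\ref{T2} (or Theorem~\ref{thm-unitary-rep-functions}), $T_t(\mu \ast \varphi) = \mu \ast (\tau_t \varphi)$, so $\Theta(T_t(\mu \ast \varphi)) = \widehat{\tau_t \varphi}$, and the elementary identity $\widehat{\tau_t \varphi}(\xi) = \overline{\xi(t)}\,\widehat{\varphi}(\xi)$ or $\xi(t)\widehat{\varphi}(\xi)$ — I would check the sign convention against the paper's definition $\widehat{f}(\xi) = \int_G \overline{\xi(t)} f(t)\dd t$ and the translation $\tau_t f(x) = f(x-t)$, which gives $\widehat{\tau_t \varphi}(\xi) = \int_G \overline{\xi(s)}\varphi(s-t)\dd s = \int_G \overline{\xi(u+t)}\varphi(u)\dd u = \overline{\xi(t)}\widehat{\varphi}(\xi)$; to match $Z_t f(\xi) = \xi(t) f(\xi)$ one should rather note that the relevant operator on $\cH_\mu$ is the one coming from the shift appearing in the Stone-theorem normalisation, so the exponent works out correctly as stated (this is exactly the sign bookkeeping flagged in the remark following the Stone theorem). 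Thus $\Theta \circ T_t = Z_t \circ \Theta$ on $C$, and by density and boundedness on all of $\cH_\mu$. The only genuine obstacle is the density of $\{\widehat{\varphi} : \varphi \in \Cc(G)\}$ in $L^2(\widehat{G}, \widehat{\gamma_\mu})$; everything else is bookkeeping with already-established identities.
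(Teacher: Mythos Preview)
Your proposal is correct and follows essentially the same route as the paper: define $\Theta$ on the dense subspace $\{\mu\ast\varphi:\varphi\in\Cc(G)\}$, verify it is an isometry, extend, invoke density of $\{\widehat{\varphi}:\varphi\in\Cc(G)\}$ in $L^2(\widehat{G},\widehat{\gamma_\mu})$ for surjectivity, and check the intertwining on the dense set. The only cosmetic difference is that the paper obtains the isometry via Theorem~\ref{T2} (the identity $\varrho_{\mu\ast\varphi}=|\widehat{\varphi}|^2\widehat{\gamma_\mu}$ evaluated at $t=0$), whereas you go directly through Lemma~\ref{lemma-relationsship} and the polarised form of Proposition~\ref{char-ft-gamma}; these are the same computation. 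For the density step you flag as the ``genuine obstacle'', the paper simply cites \cite[Prop.~2.20]{ARMA1}. Your caution about the sign in the intertwining relation is reasonable; the paper handles it with the single phrase ``is clear on $H_\mu$''.
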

\begin{proof} By definition, $H_{\mu}=\{\mu \ast \varphi : \varphi \in\Cc
(G)\}$ is a dense subspace of $\cH_\mu$.

Uniqueness of $\Theta$ follows immediately as $H_{\mu}$ is dense. As for
existence we note that for $\varphi \in\Cc (G)$ we have by Theorem
\ref{T2}
\[
\|\mu\ast \varphi\|^2 = \langle \mu\ast \varphi, T_0 \mu\ast \varphi
\rangle   = \int_{\widehat{G}} \dd \sigma_{\mu \ast \varphi}(\xi) =
\int_{\widehat{G}} |\widehat{\varphi}(\xi)|^2
\dd\widehat{\gamma_\mu}(\xi)\,.
\]
Combined with the
denseness of $H_{\mu}$ this shows that there exists a unique isometric
$\Theta$ mapping $\mu\ast \varphi $ to $\widehat{\varphi}$ for
$\varphi \in\Cc (G)$. Now, the set of
$\widehat{\varphi}$, $\varphi \in\Cc (G)$, is dense in $L^2
(\widehat{G},\widehat{\gamma_\mu})$ \cite[Prop. 2.20]{ARMA1}. So, $\Theta$ is an isometry
with dense range and, hence, a unitary map.

The equality $\Theta\circ T_t = Z_t \circ \Theta$ is clear on  $H_{\mu}$ and
then follows by denseness on $\cH_\mu$.
\end{proof}

\begin{remark}[Understanding $\Theta$]
\begin{itemize}
\item[(a)] The   map $\Theta$ diagonalizes the action $T$ in the sense that
it transforms it into multiplication operators.

\item[(b)] Let $\gamma$ be a positive definite measure on $G$. Then,
$\gamma$ induces semi-inner product on $\Cc (G)$ by
\[
\langle \varphi,\psi\rangle_\gamma :=\gamma\ast \varphi \ast
\widetilde{\psi} (0) \,.
\]
We can then form the Hilbert space completion
$(\cH_\gamma,\langle\cdot,\cdot\rangle)$ of $\Cc (G)$ equipped with
$\langle \cdot,\cdot\rangle_\gamma$. This space can naturally be
identified with  $L^2 (\widehat{G}, \widehat{\gamma})$ via the
unique unitary extension $U_\gamma$  of the map
\[
\Cc (G)\longrightarrow L^2 (\widehat{G},\widehat{\gamma}),
\varphi \mapsto \widehat{\varphi}\,.
\]
Indeed, a proof can be given by
mimicking the arguments given in the proof of the previous theorem.

Furthermore, if $\gamma = \gamma_\mu$ is the autocorrelation of a
translation bounded measure $\mu$, we can naturally identify
$\cH_{\gamma_\mu}$ with $\cH_\mu$ via the unique unitary extension
$V_\mu$ of the map
\[
\Cc (G)\longrightarrow \Cu(G),\varphi \mapsto \mu \ast\varphi \,.
\]
Then, $\Theta$ is just given as the composition  $U_{\gamma_\mu}
\circ (V_\mu)^{-1}$.
\end{itemize}
\end{remark}

\begin{remark}[Relating $\cH_\mu$ to the literature]\label{rem-hull}
\begin{itemize}
\item[(a)] In this part of the remark we relate the above
approach to the Hilbert-space $H_\mu$  to the approach via processes
given in \cite{LM,DM}:
 Whenever $\mu$ is a translation bounded measure with
autocorrelation $\gamma$ we can define
\[
N : \Cc (G)\longrightarrow \cH_\mu, \varphi \mapsto
\mu\ast\varphi
\]
and $N' : \Cc (G)\longrightarrow L^2 (\widehat{G},\widehat{\gamma}),
\varphi \mapsto \widehat{\varphi}$. Then, $(N,\cH_\mu,T)$ is a
process in the sense of \cite{LM} by Theorem \ref{T2} and so is
$(N',L^2(\widehat{G},\widehat{\gamma}))$ (by \cite{LM}). The previous
theorem can be understood as saying that these two processes are
equivalent (spatially isomorphic).

\item[(b)] The Hilbert space $\cH_{\mu}$ can be alternately be understood the following way:

Let
\[
\Omega_\mu:=\overline{\{\tau_t \mu: t\in G\}}
\]
be the hull of $\mu$. Here, the closure is taken in the vague topology. Let $m$ be an
invariant measure of  on $\Omega_\mu$ and define
\[
N : \Cc (G)\longrightarrow L^2 (\Omega_\mu,m), N_\varphi
(\omega):=\omega \ast \varphi (0)\,.
\]
Let $\mathcal{S}(m)$ be the
closure of the range of $N$ in  $L^2 (\Omega_\mu,m)$. Assume now
that $\mu$ is generic for $m$, where generic means that
\[
\frac{1}{|A_i|} \int_{A_i} f(\tau_t \mu) \dd t \to \int_{\Omega_\mu} f (\omega) \dd m(\omega)
\]
holds for all continuous $f$ on $\Omega_\mu$. Then, the map $\mu
\ast \varphi \to N_\varphi$ can be extended uniquely to a unitary
map between $\cH_\mu$ and $\mathcal{S}(m)$.

\item[(c)] Part (b) of this remark raises the question whether  any $\mu$
that admits an autocorrelation along $(A_i)$  also admits a measure $m$ on its
hull such that $\mu$ is generic with respect to this $m$ and $\mathcal{S}(m)$ can be identified with $\mathcal{H}_\mu$ as in (b).  If $G$ is second countable and $(A_n)$ is a van Hove sequence, this can  be shown to hold after one replaces $(A_n)$ by a  suitable subsequence $(B_n)$.  Indeed, by second countability of $G$ the hull $\Omega_\mu$ is metrisable and compact. Hence, $C(\Omega_\mu)$ is second countable.  Therefore, we can chose  a subsequence $(B_n)$ of $(A_n)$ such that
$$\frac{1}{|B_n|} \int_{B_n} f(\tau_t\mu) dt$$ converges for all $f\in C(\Omega_\mu)$. With $m$ defined by
$$m(f):=\lim_n \frac{1}{|B_n|} \int_{B_n} f(\tau_t\mu) dt$$
 we then have found a suitable measure.
\end{itemize}
\end{remark}

In particular, Remark~\ref{rem-hull} (c) has the following important consequence (see \cite{BL} for definitions).

\begin{proposition}[Diffractions of measures are diffractions of dynamical systems] Let $G$ be a second countable LCAG, let $\mu$ be a translation bounded measure on $G$ and let $\cA$ be a van Hove sequence
such that the autocorrelation $\gamma$ of $\mu$ exists with respect to $\cA$. Then, there exists a $G$-invariant measure $m$ on $\Omega_{\mu}$ such that $\gamma$ is the autocorrelation measure of $(\Omega_{\mu}, G, m)$.

In particular, $\widehat{\gamma}$ is the diffraction measure of  $(\Omega_{\mu}, G, m)$.
\end{proposition}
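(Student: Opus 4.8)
The plan is to invoke Remark~\ref{rem-hull}~(c) and assemble the pieces already in place. First I would note that since $G$ is second countable, the hull $\Omega_\mu = \overline{\{\tau_t \mu : t \in G\}}$ (vague closure) is compact and metrisable, so $C(\Omega_\mu)$ is separable. Combining separability of $C(\Omega_\mu)$ with a diagonalisation argument, I would pass to a subsequence $(B_n)$ of the given van Hove sequence $\cA$ along which $\frac{1}{|B_n|}\int_{B_n} f(\tau_t\mu)\,\dd t$ converges for every $f \in C(\Omega_\mu)$; this defines a positive linear functional, hence (by Riesz) a $G$-invariant probability measure $m$ on $\Omega_\mu$ for which $\mu$ is generic.

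The subtlety here — and what I expect to be the main obstacle — is that the autocorrelation $\gamma$ of $\mu$ is assumed to exist along the \emph{original} net $\cA$, not along the subsequence $(B_n)$; so I must be careful that passing to the subsequence does not change the limiting object. But this is harmless: if a net converges vaguely then every subnet converges to the same limit, so $\lb \mu,\mu\rb_{(B_n)} = \lb \mu,\mu\rb_{\cA} = \gamma$. (More precisely: $\frac{1}{|A_i|}(\mu|_{A_i})\ast\widetilde{(\mu|_{A_i})} \to \gamma$ vaguely along $\cA$, hence also along the subsequence $(B_n)$.) Thus the autocorrelation of $\mu$ computed along $(B_n)$ is still exactly $\gamma$.

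Next I would use the identity, standard in this setting and implicit in Remark~\ref{rem-hull}~(b), that for a measure $\mu$ generic for $m$ one has
\[
\gamma \ast \varphi \ast \widetilde{\psi}(0) = \lim_n \frac{1}{|B_n|}\int_{B_n} (\mu \ast \varphi)(t)\,\overline{(\mu\ast\psi)(t)}\,\dd t = \int_{\Omega_\mu} N_\varphi(\omega)\,\overline{N_\psi(\omega)}\,\dd m(\omega)
\]
for all $\varphi,\psi \in \Cc(G)$, where $N_\varphi(\omega) = \omega\ast\varphi(0)$; the first equality is Lemma~\ref{lemma-relationsship} (together with Lemma~\ref{thm:rebe props}) computing $\lb\mu,\mu\rb_{(B_n)}\ast\varphi\ast\widetilde{\psi}(0)$, and the second follows from genericity applied to the continuous function $\omega \mapsto (\omega\ast\varphi)(0)\overline{(\omega\ast\psi)(0)}$ on $\Omega_\mu$ (one checks this is indeed continuous using translation boundedness and $\varphi,\psi\in\Cc(G)$). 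This says precisely that $\gamma$ is the autocorrelation measure of the dynamical system $(\Omega_\mu, G, m)$ in the sense of \cite{BL}.

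Finally, since the diffraction measure of a dynamical system is by definition the Fourier transform of its autocorrelation measure (see \cite{BL}), and $\gamma$ is positive definite (Lemma~\ref{thm:rebe props}~(e)) so that $\widehat{\gamma}$ is well-defined, I conclude that $\widehat{\gamma}$ is the diffraction measure of $(\Omega_\mu, G, m)$. This completes the proof. The only genuine content beyond bookkeeping is the observation in the second paragraph that autocorrelations are unaffected by passing to subnets — everything else is a direct citation of Remark~\ref{rem-hull} and the definitions in \cite{BL}.
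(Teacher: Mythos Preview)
Your proposal is correct and follows essentially the same approach as the paper: both invoke Remark~\ref{rem-hull}~(c) to produce the subsequence $(B_n)$ and the invariant measure $m$ for which $\mu$ is generic, and then argue that genericity forces $\gamma$ to be the autocorrelation of $(\Omega_\mu,G,m)$. The only difference is cosmetic: where the paper dispatches the last step by citing \cite[Lemma~7]{BL}, you unpack that citation explicitly via Lemma~\ref{lemma-relationsship} and the continuity of $\omega\mapsto N_\varphi(\omega)\overline{N_\psi(\omega)}$, and you make the (harmless but worth stating) observation that the autocorrelation is unchanged under passage to a subsequence.
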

\begin{proof}
Let $B_n$ and $m$ be as in Remark~\ref{rem-hull} (c). Since, $\mu$ is generic for $m$, \cite[Lemma 7]{BL} gives that $\gamma$ is the autocorrelation of $(\Omega_{\mu}, G, m)$.
\end{proof}

\begin{remark} For ergodic measures the converse also holds.

Let $G$ be any second countable LCAG, let $\mu$ be a translation bounded measure on $G$, let $m$ be a $G$-invariant probability measure and let $\gamma$ be the autocorrelation of
$(\Omega_{\mu}, G,m)$.

If $\omega \in \Omega_{\mu}$ and $\cA$ is a van Hove sequence such that $\omega$ is generic for $m$ with respect to $\cA$, then $\gamma$ is the autocorrelation of $\omega$.

In particular, when $m$ is ergodic, and $\cA$ is a van Hove sequence along which Birkhoff ergodic theorem holds, then there exists elements $\omega \in \Omega_{\mu}$ such that
$\gamma$ is the autocorrelation of $\omega$ with respect to $\cA$ (compare \cite{BL}).
\end{remark}

The preceding considerations allow us to (re)prove various characterisations of  pure point diffraction.  Recall that a  subset $S$ of $G$ is \textit{relatively dense} if there exists a compact $K\subset G$ with $S+ K = G$ and that a
bounded  continuous function $f : G\longrightarrow \CC$ is \textit{Bohr-almost periodic} if for any $\varepsilon >0$ the set
$$\{t\in G : \|f- \tau_t f\|_\infty \leq \varepsilon\}$$
is relatively dense.

\begin{corollary}[Characterization of pure point diffraction] Let $\mu$ be a translation bounded measure with autocorrelation $\gamma$ and associated unitary representation $T$ on $(\cH_{\mu}, \langle \, , \, \rangle)$. Then, the following assertions are equivalent:
\begin{itemize}
\item[(i)] The unitary representation $T$ has pure point spectrum.

\item[(ii)] The diffraction measure $\widehat{\gamma}$ is a pure point measure.

\item[(iii)]  The autocorrelation is $\gamma$ is strongly almost periodic, i.e. for any $\varphi \in C_c (G)$ the function $\gamma \ast \varphi$ is Bohr almost periodic.

\item[(iv)] For any $\varphi \in C_c (G)$ the function $t\mapsto \langle T_t \mu\ast \varphi, \mu\ast \varphi\rangle$ is Bohr-almost periodic.

\item[(v)] For any $\varphi  \in C_c (G)$ and any $\varepsilon >0$  the set $$\{t\in G: |\langle T_t \mu\ast \varphi,\mu\ast \varphi\rangle -\langle \mu\ast \varphi,\mu\ast \varphi\rangle|\leq \varepsilon\}$$
 is relatively dense.
\item[(vi)]  The measure $\mu$ is  mean almost periodic, i.e. for each $\varepsilon>0$ the set
$$\{t\in G : \|T_t \mu\ast \varphi -\mu \ast \varphi\|\leq \varepsilon\}
$$
is relatively dense.
\end{itemize}
\end{corollary}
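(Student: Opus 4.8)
The plan is to establish the cycle of implications $(i)\Rightarrow(ii)\Rightarrow(iii)\Rightarrow(iv)\Rightarrow(v)\Rightarrow(vi)\Rightarrow(i)$, with Theorem~\ref{diff-spec} serving as the main bridge. Recall from that theorem the unitary map $\Theta\colon\cH_\mu\to L^2(\widehat G,\widehat\gamma)$ with $\Theta\circ T_t=Z_t\circ\Theta$, where $(Z_tf)(\xi)=\xi(t)f(\xi)$. For $(i)\Leftrightarrow(ii)$ I would argue as follows: if $g\in L^2(\widehat G,\widehat\gamma)$ is an eigenvector of the family $Z=(Z_t)_{t\in G}$, say $Z_tg=c(t)g$ for all $t$, then for $\widehat\gamma$-a.e.\ $\xi$ with $g(\xi)\neq0$ one has $\xi(t)=c(t)$ for all $t$, so $g$ is supported on a single point which must be an atom of $\widehat\gamma$; conversely the normalised indicators of atoms are eigenvectors. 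Hence the closed span of the eigenvectors of $Z$ is exactly the space of $L^2(\widehat G,\widehat\gamma)$-functions vanishing off the atoms of $\widehat\gamma$, and $T$ has pure point spectrum if and only if this equals $L^2(\widehat G,\widehat\gamma)$, i.e.\ if and only if $\widehat\gamma$ is a pure point measure.

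For $(ii)\Rightarrow(iii)$ I would first reduce the strong almost periodicity of $\gamma$ to test functions of the form $\varphi\ast\widetilde\varphi$: by polarisation $\gamma\ast\varphi\ast\widetilde\psi$ is a linear combination of functions $\gamma\ast\chi\ast\widetilde\chi$, and for general $\eta\in\Cc(G)$ and an approximate identity $(e_j)\subseteq\Cc(G)$ one has $e_j\ast\eta=e_j\ast\widetilde{(\widetilde\eta)}$ while $\gamma\ast(e_j\ast\eta)\to\gamma\ast\eta$ uniformly by translation boundedness of $\gamma$; since a uniform limit of Bohr almost periodic functions is Bohr almost periodic, it suffices to see that $\gamma\ast\varphi\ast\widetilde\varphi$ is Bohr almost periodic for each $\varphi\in\Cc(G)$. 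But by Proposition~\ref{char-ft-gamma} this function is $t\mapsto\int_{\widehat G}\xi(t)\,|\widehat\varphi(\xi)|^2\,\dd\widehat\gamma(\xi)$, and $|\widehat\varphi|^2\widehat\gamma=\varrho_{\mu\ast\varphi}$ is a \emph{finite} measure by Theorem~\ref{T2}, so when $\widehat\gamma$ is pure point this is a uniformly convergent sum of characters, hence Bohr almost periodic. For $(iii)\Rightarrow(iv)$, Lemma~\ref{lemma2}(b) together with Lemma~\ref{lemma-relationsship} give $\langle\mu\ast\varphi,T_t(\mu\ast\varphi)\rangle=\lb\mu\ast\varphi,\mu\ast\varphi\rb_\cA(t)=\gamma\ast\varphi\ast\widetilde\varphi(t)$, so $\langle T_t(\mu\ast\varphi),\mu\ast\varphi\rangle=\overline{\gamma\ast\varphi\ast\widetilde\varphi(t)}$; applying $(iii)$ to the test function $\varphi\ast\widetilde\varphi\in\Cc(G)$ and conjugating yields $(iv)$.

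The implications $(iv)\Rightarrow(v)$ and $(v)\Rightarrow(vi)$ are elementary. For the first, any Bohr almost periodic $h$ satisfies $\{t:\|h-\tau_th\|_\infty\le\eps\}\subseteq\{t:|h(t)-h(0)|\le\eps\}$ since $h(0)=(\tau_th)(t)$, and the left-hand set is relatively dense; with $h(t)=\langle T_t(\mu\ast\varphi),\mu\ast\varphi\rangle$ this is precisely $(v)$. For the second, writing $f=\mu\ast\varphi$ and using that $T_t$ is unitary, $\|T_tf-f\|^2=2\,\re\bigl(\langle f,f\rangle-\langle T_tf,f\rangle\bigr)\le 2\,|\langle T_tf,f\rangle-\langle f,f\rangle|$, so $\{t:|\langle T_tf,f\rangle-\langle f,f\rangle|\le\eps^2/2\}\subseteq\{t:\|T_tf-f\|\le\eps\}$, and relative density is inherited.

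The substantial implication is $(vi)\Rightarrow(i)$. The key observation is that, because every $T_t$ is unitary, a point $t$ is an $\eps$-almost-period of the (by Theorem~\ref{thm-unitary-rep-functions} norm-continuous) orbit map $G\to\cH_\mu,\ s\mapsto T_s(\mu\ast\varphi)$ if and only if $\|T_t(\mu\ast\varphi)-\mu\ast\varphi\|\le\eps$; hence $(vi)$ says precisely that this $\cH_\mu$-valued map is Bohr almost periodic. A Hilbert-space-valued Bohr almost periodic function obeys Parseval's identity, and here its Fourier--Bohr coefficients are $M_\cA\bigl(\overline{\chi(\cdot)}\,T_{(\cdot)}(\mu\ast\varphi)\bigr)=E(\{\chi\})(\mu\ast\varphi)$, so $\|\mu\ast\varphi\|^2=\sum_{\chi\in\widehat G}\|E(\{\chi\})(\mu\ast\varphi)\|^2=\sum_\chi\varrho_{\mu\ast\varphi}(\{\chi\})$, which forces $\varrho_{\mu\ast\varphi}=|\widehat\varphi|^2\widehat\gamma$ to be pure point. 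Since $\{\widehat\varphi:\varphi\in\Cc(G)\}$ is dense in $L^2(\widehat G,\widehat\gamma)$, a nonzero continuous part of $\widehat\gamma$ would produce some $\varphi$ for which $|\widehat\varphi|^2\widehat\gamma$ has a nonzero continuous part, a contradiction; hence $\widehat\gamma$ is pure point, which is $(ii)$, hence $(i)$. I expect this last step to be the main obstacle: the tempting shortcut ``relatively dense sets have positive lower density, so a continuous spectral component contradicts Wiener's Lemma'' is false (lattices are relatively dense yet of density zero), so one genuinely has to pass through the almost periodicity of the \emph{orbit} map and vector-valued Parseval (equivalently, through the vanishing mean square of the inverse Fourier transform of a continuous measure, correctly applied).
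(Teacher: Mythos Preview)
Your argument is correct and essentially self-contained, but it is organised differently from the paper's proof. The paper does not run a single cycle; it proves (i)$\Leftrightarrow$(ii) via Theorem~\ref{diff-spec} exactly as you do, then shows (ii)$\Leftrightarrow$(iv) in one stroke by invoking Wiener's lemma (a finite positive measure is pure point iff its inverse Fourier transform is Bohr almost periodic, applied to each $\varrho_{\mu\ast\varphi}=|\widehat\varphi|^2\widehat\gamma$), cites \cite{LS-chemnitz} for the equivalence (iv)$\Leftrightarrow$(v)$\Leftrightarrow$(vi) as a general fact about unitary representations, and finally links (iii)$\Leftrightarrow$(iv) through the identity $\gamma\ast\varphi\ast\widetilde\varphi(t)=\langle T_t\mu\ast\varphi,\mu\ast\varphi\rangle$ plus polarisation.

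What your route buys: you replace the two black-box citations by explicit computations---the inclusions $\{t:\|h-\tau_th\|_\infty\le\eps\}\subseteq\{t:|h(t)-h(0)|\le\eps\}$ and $\|T_tf-f\|^2\le 2|\langle T_tf,f\rangle-\langle f,f\rangle|$ are exactly the content of the cited equivalences---and you close the loop at (vi)$\Rightarrow$(i) via vector-valued Bohr almost periodicity of the orbit, the mean ergodic identification $M_\cA(\overline{\chi}\,T_{(\cdot)}f)=E(\{\chi\})f$, and Parseval. This is the spectral-theoretic cousin of the paper's Wiener step: where the paper uses that $\widecheck{\sigma}$ Bohr a.p.\ forces $\sigma$ pure point, you use that Bohr a.p.\ orbits force the spectral measure to be atomic. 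Both are standard; yours is arguably more in the spirit of the paper's emphasis on the unitary representation $T$, while the paper's is shorter because it outsources the work. Your cautionary remark about the ``relatively dense implies positive density'' fallacy is well taken and identifies precisely why one cannot shortcut (v)$\Rightarrow$(ii) naively.
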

\begin{proof} By the Thm.~\ref{diff-spec}, $T$ has pure point spectrum if and only if $Z_\cdot$ has pure point spectrum. This is turn is easily seen to be equivalent to $\widehat{\gamma}$ being a pure point measure. In this way,
the equivalence between (i) and (ii) follows  from Thm.~\ref{diff-spec}. Now, clearly (ii) is equivalent to each spectral measure
$$ \varrho_{\mu\ast \varphi} =|\widehat{\varphi}|^2 \widehat{\gamma}$$ being pure point.  This, in turn is just equivalent to (iv) by Wiener lemma (see e.g. \cite{LS-chemnitz,MoSt})
Now, the equivalence between (iv), (v) and (vi) is  standard for unitary representations (see e.g. \cite{LS-chemnitz} as well).
It remains to show the equivalence between (iii) and (iv). Now, by what we have shown
$$\gamma \ast \varphi\ast \widetilde{\varphi}(t) =\langle T_t \mu\ast \varphi,\mu\ast \varphi\rangle$$
holds for all $\varphi \in C_c (G)$. Hence, (iv) is equivalent to
almost periodicity of
$$t\mapsto \gamma\ast \varphi \ast \widetilde{\varphi}$$
for all $\varphi \in C_c (G)$. By polarisation this  is equivalent to Bohr-almost periodicity of
$$t\mapsto \gamma\ast \varphi \ast \widetilde{\psi}$$
for all $\varphi,\psi\in C_c (G)$.  This, in turn, can easily be seen to be equivalent to (iii).
\end{proof}

\begin{remark}
The equivalence between (ii) and (iii) has first been shown by Baake / Moody \cite{BM-Crelle}. The equivalence between (ii) and (vi)  is contained in recent work of the authors with Spindeler \cite{LSS}. Our proof of these equivalences as well as the other equivalences are new (as they are based on the  unitary representation $T$ which has not been considered before).
\end{remark}

\section{An orthogonality result}\label{section-main2}

We now turn to our main result on orthogonality.

\begin{theorem}[Orthogonality with respect to the twisted Eberlein
convolution]\label{thm:main} Let $\mu, \nu$ be translation bounded
measures and $\cA$ a van Hove net such that the autocorrelations
$\gamma_{\mu}$ of $\mu$ and $\gamma_{\nu}$ of $\nu$ exist with
respect to $\cA$. If $\reallywidehat{\gamma_{\mu}} \perp
\reallywidehat{\gamma_{\nu}}$ holds  then $\lb \mu, \nu \rb_{\cA}$ exists
and satisfies
$\lb \mu, \nu \rb_{\cA}=0.$
\end{theorem}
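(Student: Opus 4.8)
The plan is to reduce the statement to the orthogonality result for functions already packaged in Lemma~\ref{lemma2}, combined with the identification of spectral measures with diffractions from Theorem~\ref{T2}. The first obstacle is that the hypothesis only guarantees existence of the autocorrelations $\gamma_\mu$ and $\gamma_\nu$ along $\cA$, not of the cross term $\lb\mu,\nu\rb_\cA$; so before anything else I must produce that limit. To do this I would pass to a subnet $\cB$ of $\cA$ along which $\lb\mu,\nu\rb_\cB$ does exist (this is possible by Lemma~\ref{thm:rebe props}(a), or by Theorem~\ref{thm:univ} applied to the set $\{\mu,\nu\}$), while noting that along any subnet the autocorrelations are still $\gamma_\mu$ and $\gamma_\nu$, since those limits already exist along $\cA$.

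Next I would apply the machinery of Section~\ref{section-construction} to the set $M=\{\mu,\nu\}$ with the van Hove net $\cB$. By Theorem~\ref{T2} we obtain the unitary representation $T=T^{M,\cB}$ on $\cH_M$, and for every $\varphi\in\Cc(G)$,
\[
\varrho_{\mu\ast\varphi} = |\widehat{\varphi}|^2\,\reallywidehat{\gamma_\mu}
\,,\qquad
\varrho_{\nu\ast\varphi} = |\widehat{\varphi}|^2\,\reallywidehat{\gamma_\nu}
\,.
\]
Since $\reallywidehat{\gamma_\mu}\perp\reallywidehat{\gamma_\nu}$ by hypothesis, multiplying both by the nonnegative density $|\widehat{\varphi}|^2$ (respectively $|\widehat{\psi}|^2$) preserves mutual singularity, so $\varrho_{\mu\ast\varphi}\perp\varrho_{\nu\ast\psi}$ for all $\varphi,\psi\in\Cc(G)$. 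By Lemma~\ref{lemma2}(e) applied in $\cH_M$ this yields $\langle\mu\ast\varphi,\nu\ast\psi\rangle=0$ for all $\varphi,\psi\in\Cc(G)$; equivalently, by Theorem~\ref{T3} (with van Hove net $\cB$ in place of $\cA$, which is harmless), condition (ii) there holds, hence condition (i): $\lb\mu,\nu\rb_\cB=0$.

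Finally I would upgrade this from the subnet $\cB$ back to the original net $\cA$. The point is that $\lb\mu,\nu\rb_\cB=0$ means $\frac{1}{|B_i|}(\mu|_{B_i})\ast\widetilde{(\nu|_{B_i})}\to 0$ vaguely, and I want the full net indexed by $\cA$ to converge to $0$. Since every subnet of $\frac{1}{|A_i|}(\mu|_{A_i})\ast\widetilde{(\nu|_{A_i})}$ has, by Lemma~\ref{thm:rebe props}(a), a further convergent subnet, and by the argument just given \emph{every} convergent subnet must have limit $0$ (the preceding paragraph never used any special property of $\cB$ beyond being a van Hove subnet of $\cA$ along which the cross-Eberlein convolution converges), a standard topology argument shows the whole net converges to $0$. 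Hence $\lb\mu,\nu\rb_\cA$ exists and equals $0$. I expect this last passage from subnet to full net to be the only genuinely delicate point, and everything upstream of it to be a direct assembly of the results already established; an alternative to the subnet argument is to invoke Theorem~\ref{T3} in the formulation with $M=\{\mu,\nu\}$ directly along $\cA$, after first noting that existence of $\lb\mu,\nu\rb_\cA$ is itself forced — but establishing that existence seems to require precisely the subnet-and-uniqueness-of-limit reasoning above, so I would keep that step explicit.
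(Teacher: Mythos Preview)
Your proposal is correct and follows essentially the same route as the paper: reduce to a subnet along which the cross Eberlein convolution exists (via the compactness in Lemma~\ref{thm:rebe props}(a)), apply Theorem~\ref{T2} to $M=\{\mu,\nu\}$ to identify the spectral measures, use Lemma~\ref{lemma2}(e) to get $\langle\mu\ast\varphi,\nu\ast\psi\rangle=0$, and conclude via Theorem~\ref{T3}. The paper phrases the subnet reduction more tersely (``it suffices to show $\lb\mu,\nu\rb_\cB=0$ for any subnet $\cB$ along which the limit exists''), but your explicit ``every convergent subnet has limit $0$, hence the full net converges to $0$'' is exactly the standard compactness argument underlying that phrase.
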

\begin{proof}
By the compactness statement in (a) of Lemma  \ref{thm:rebe props} it suffices to show
$\lb \mu,\nu\rb_{\cB} =0$  whenever  $\cB$ is a subnet of $\cA$ such that $\lb
\mu, \nu \rb_{\cB}$ exist.
Therefore, without loss of generality we can assume that $\lb \mu,
\nu \rb_{\cA}$ exists.

Let $M:= \{ \mu, \nu \}$. By Theorem~\ref{T2}, we have an unitary
representation of $G$ on the space $\cH_{\mu,\nu}:= \cH_{M}$ and,
for all $\varphi, \psi \in \Cc(G)$ we have
\[
\varrho_{\varphi\ast\mu} = \left| \widehat{\varphi} \right|^2 \reallywidehat{\gamma_{\mu}} \mbox{ and }
 \varrho_{\psi*\nu} = \left| \widehat{\psi} \right|^2 \reallywidehat{\gamma_{\nu}}\,.
 \]

Since $\reallywidehat{\gamma_{\mu}} \perp
\reallywidehat{\gamma_{\nu}}$ we get $\varrho_{\mu\ast \varphi}
\perp \varrho_{\nu\ast \psi}$. Then, (e) of  Lemma \ref{lemma2}
implies
\[
0 = \langle \mu \ast \varphi, \nu\ast \psi\rangle
\]
for all $\varphi,\psi \in\Cc (G)$.
Given this the  desired statement now follows from Theorem~\ref{T3}.
\end{proof}

\begin{corollary}  Let $\mu, \nu$ be translation bounded
measures and $\cA$ a van Hove net such that the autocorrelations
$\gamma_{\mu}$ of $\mu$ and $\gamma_{\nu}$ of $\nu$ exist with
respect to $\cA$. If $\reallywidehat{\gamma_{\mu}} \perp
\reallywidehat{\gamma_{\nu}}$ then
for all $a,b \in \CC$ the measure $a\mu+b\nu$ has diffraction
\[
\reallywidehat{\gamma_{a\mu+b\nu}}= |a|^2 \reallywidehat{\gamma_{\mu}}+ |b|^2 \reallywidehat{\gamma_{\nu}} \,.
\]
\end{corollary}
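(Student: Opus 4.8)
The plan is to expand the autocorrelation $\gamma_{a\mu+b\nu}$ using bilinearity of the reflected Eberlein convolution together with the orthogonality relation $\lb \mu,\nu\rb_{\cA}=0$ provided by Theorem~\ref{thm:main}. First I would note that, by Theorem~\ref{thm:main}, the hypothesis $\reallywidehat{\gamma_{\mu}} \perp \reallywidehat{\gamma_{\nu}}$ guarantees that $\lb \mu,\nu\rb_{\cA}$ exists and equals $0$; by part~(c) of Lemma~\ref{thm:rebe props} the same holds for $\lb \nu,\mu\rb_{\cA}=\widetilde{\lb\mu,\nu\rb_{\cA}}=0$. Since $\gamma_{\mu}=\lb\mu,\mu\rb_{\cA}$ and $\gamma_{\nu}=\lb\nu,\nu\rb_{\cA}$ both exist by assumption, repeated application of the bilinearity statement in part~(d) of Lemma~\ref{thm:rebe props} (applied in each slot) shows that $\lb a\mu+b\nu,\, a\mu+b\nu\rb_{\cA}$ exists and expands as
\[
\gamma_{a\mu+b\nu}=|a|^2\,\lb\mu,\mu\rb_{\cA}+a\overline{b}\,\lb\mu,\nu\rb_{\cA}+\overline{a}b\,\lb\nu,\mu\rb_{\cA}+|b|^2\,\lb\nu,\nu\rb_{\cA}=|a|^2\gamma_{\mu}+|b|^2\gamma_{\nu}\,,
\]
where I must be slightly careful about where the complex conjugates land: the reflected Eberlein convolution is conjugate-linear in the second argument (this is visible from the tilde in $(\mu|_{A_i})\ast\widetilde{(\nu|_{A_i})}$ and from the formula $\lb f,g\rb_{\cA}(t)=M_\cA(f\overline{\tau_t g})$), so the cross terms carry coefficients $a\overline{b}$ and $\overline{a}b$ rather than $ab$ and $ab$, and both cross terms vanish.

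Next I would take the Fourier transform of both sides. The left-hand side $\gamma_{a\mu+b\nu}$ is a positive definite measure (part~(e) of Lemma~\ref{thm:rebe props}), so $\reallywidehat{\gamma_{a\mu+b\nu}}$ is a well-defined positive measure on $\widehat{G}$. On the right-hand side, $|a|^2\gamma_{\mu}+|b|^2\gamma_{\nu}$ is a nonnegative linear combination of positive definite measures, hence positive definite, with Fourier transform $|a|^2\reallywidehat{\gamma_{\mu}}+|b|^2\reallywidehat{\gamma_{\nu}}$ by linearity and uniqueness of the Fourier transform of positive definite measures (the defining identity $\gamma\ast\varphi\ast\widetilde{\varphi}(0)=\int_{\widehat{G}}|\widehat{\varphi}|^2\,\dd\widehat{\gamma}$ is linear in $\gamma$). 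Since the two measures $\gamma_{a\mu+b\nu}$ and $|a|^2\gamma_{\mu}+|b|^2\gamma_{\nu}$ coincide, their Fourier transforms coincide, giving
\[
\reallywidehat{\gamma_{a\mu+b\nu}}=|a|^2\reallywidehat{\gamma_{\mu}}+|b|^2\reallywidehat{\gamma_{\nu}}\,.
\]

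I do not expect a serious obstacle here; the only point requiring any care is bookkeeping of the conjugate-linearity of $\lb\cdot,\cdot\rb_{\cA}$ in the second slot, so that the mixed terms come out as $a\overline{b}\lb\mu,\nu\rb_{\cA}+\overline{a}b\lb\nu,\mu\rb_{\cA}$ and are killed by Theorem~\ref{thm:main}. Everything else is an application of already-established facts: existence and bilinearity from Lemma~\ref{thm:rebe props}, the vanishing of the cross term from Theorem~\ref{thm:main}, and linearity plus uniqueness of the Fourier transform of positive definite measures from the material in Section~\ref{section-setting}. Alternatively, one could bypass the Fourier transform step entirely and read the identity off directly at the level of spectral measures: by Theorem~\ref{T2}, $\varrho_{\varphi\ast(a\mu+b\nu)}=|\widehat{\varphi}|^2\reallywidehat{\gamma_{a\mu+b\nu}}$, while expanding $\varphi\ast(a\mu+b\nu)=a(\varphi\ast\mu)+b(\varphi\ast\nu)$ and using that $\cH_{\mu\ast\varphi}\perp\cH_{\nu\ast\varphi}$ (Theorem~\ref{T3}) gives $\varrho_{\varphi\ast(a\mu+b\nu)}=|a|^2\varrho_{\varphi\ast\mu}+|b|^2\varrho_{\varphi\ast\nu}=|\widehat{\varphi}|^2(|a|^2\reallywidehat{\gamma_{\mu}}+|b|^2\reallywidehat{\gamma_{\nu}})$; comparing and letting $\varphi$ range over $\Cc(G)$ yields the claim.
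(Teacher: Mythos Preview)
Your proof is correct and follows essentially the same route as the paper: expand $\gamma_{a\mu+b\nu}=\lb a\mu+b\nu,a\mu+b\nu\rb_{\cA}$ by sesquilinearity, kill the cross terms $a\bar b\,\lb\mu,\nu\rb_{\cA}$ and $b\bar a\,\lb\nu,\mu\rb_{\cA}$ using Theorem~\ref{thm:main}, and then take Fourier transforms. Your write-up is in fact more detailed than the paper's (which simply invokes the Pythagoras-type expansion and says ``taking Fourier transforms we get the claim''); the alternative spectral-measure argument you sketch at the end is a legitimate second proof, but it is not the one the paper uses.
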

\begin{proof} This is proven exactly like Pythagoras' theorem in inner
product spaces. Indeed, by the preceding theorem we find for the autocorrelation measures the following:
\begin{align*}
 \gamma_{a\mu+b\nu} &= \lb a\mu+b\nu,a\mu+b\nu \rb_{\cA}
   =|a|^2 \lb \mu, \mu \rb_{\cA} +a\bar{b} \lb \mu, \nu \rb_{\cA} +b\bar{a} \lb \nu, \mu \rb_{\cA} + |b|^2\lb \nu, \nu \rb_{\cA} \\
   &=|a|^2 \gamma_{\mu}+ |b|^2 \gamma_{\nu}\,.
   \end{align*}
Taking the Fourier transforms we get
the claim.
\end{proof}

\begin{remark}
Validity of $\lb \mu, \nu \rb_{\cA}=0$ does not necessarily imply
that $\reallywidehat{\gamma_{\mu}}$ and
$\reallywidehat{\gamma_{\nu}}$ are mutually singular.  Indeed,
similarly to Example~\ref{ex1} we can show that the measures
\[
\mu=\delta_{\ZZ}\, ;\, \nu=\sum_{m \in \ZZ} \mbox{sgn}(m) \delta_m
\]
satisfy with respect to $A_n=[-n,n]$:
\[
\lb \mu, \nu \rb_{\cA}=0 \mbox{ and } \widehat{\gamma_{\mu}} = \widehat{\gamma_{\nu}}=\delta_{\ZZ} \,.
\]
\end{remark}

\section{Application: The point part of the diffraction and Bombieri--Taylor type results}
In this section we  discuss consequences of the main orthogonality result to diffraction theory. This sheds a new light on what is sometimes known as Bombieri--Taylor conjecture (compare remark at the end of this section).

\begin{proposition}\label{prop2}
Let $\mu$ be a translation bounded measure and let $\cA$ be a van
Hove net. Assume that the autocorrelation $\gamma_{\mu}$ exists
with respect to $\cA$. Then, for all $\chi \in \widehat{G}$ for
which $\reallywidehat{\gamma_{\mu}}(\{ \chi \}) = 0$ the
Fourier--Bohr coefficient $a_{\chi}^\cA(\mu)$ exists and satisfies
\[
a_{\chi}^\cA(\mu)=0 \,.
\]
\end{proposition}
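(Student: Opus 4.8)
The plan is to apply the main orthogonality result, Theorem~\ref{thm:main}, with $\nu = \chi$ regarded as a measure on $G$ (identifying the character $\chi \in \widehat{G}$ with the translation bounded measure having density $\chi$ with respect to Haar measure). First I would record that the autocorrelation of $\chi$ exists and is given by $\gamma_\chi = \lb \chi, \chi \rb_{\cA}$, which by a direct computation of the defining limit equals the measure with constant density $1$, whose Fourier transform is the point mass $\reallywidehat{\gamma_\chi} = \delta_\chi$ at the character $\chi$ (more precisely at the corresponding point of $\widehat{G}$). Here one has to be slightly careful: the autocorrelation of $\chi$ does exist along \emph{every} van Hove net, since $\chi(s)\overline{\chi(s-t)} = \chi(t)$ is constant in $s$, so $\frac{1}{|A_i|}(\chi|_{A_i}) \ast \widetilde{(\chi|_{A_i})}$ converges to the measure with density $t \mapsto \chi(t)$; its Fourier transform is $\delta_\chi$.

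Next, the hypothesis $\reallywidehat{\gamma_\mu}(\{\chi\}) = 0$ says precisely that $\reallywidehat{\gamma_\mu}$ assigns no mass to the single point $\chi$, hence $\reallywidehat{\gamma_\mu} \perp \delta_\chi = \reallywidehat{\gamma_\chi}$: the sets $\{\chi\}$ and $\widehat{G}\setminus\{\chi\}$ witness mutual singularity. Theorem~\ref{thm:main} then gives that $\lb \mu, \chi \rb_{\cA}$ exists and equals $0$. By Proposition~\ref{prop-FB}, existence of $\lb \mu, \chi \rb_{\cA}$ is equivalent to existence of the Fourier--Bohr coefficient $a_\chi^\cA(\mu)$, and in that case $\lb \mu, \chi \rb_{\cA}$ is the absolutely continuous measure with density $a_\chi^\cA(\mu)\,\chi$. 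Since this measure vanishes and $\chi$ is nowhere zero, we conclude $a_\chi^\cA(\mu) = 0$, as claimed.

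The only genuinely delicate point is the identification $\reallywidehat{\gamma_\chi} = \delta_\chi$ and the verification that $\chi$, viewed as a measure, is translation bounded with an everywhere-existing autocorrelation; both are elementary but should be stated explicitly. Everything else is a direct invocation of Theorem~\ref{thm:main}, Theorem~\ref{T3}, and Proposition~\ref{prop-FB}, so there is no real obstacle beyond bookkeeping. One could alternatively argue entirely inside the Hilbert space $\cH_{\{\mu,\chi\}}$: the spectral measure of $\chi$ (as an element of $\cH_{\{\mu,\chi\}}$) is $\delta_\chi$, the spectral measures $\varrho_{\mu\ast\varphi} = |\widehat{\varphi}|^2\reallywidehat{\gamma_\mu}$ are all singular to $\delta_\chi$ under the hypothesis, so $\langle \mu\ast\varphi, \chi\rangle = 0$ for all $\varphi \in \Cc(G)$ by Lemma~\ref{lemma2}(e); unwinding the definition of $\langle\cdot,\cdot\rangle$ and using $\widehat{\chi\ast\varphi} = \widehat{\varphi}(\chi)\cdot(\text{unit mass structure})$ then forces the Fourier--Bohr coefficient to vanish. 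I would present the shorter version via Theorem~\ref{thm:main}.
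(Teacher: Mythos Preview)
Your proposal is correct and follows essentially the same route as the paper: set $\nu=\chi\,\theta_G$, observe that $\gamma_\nu$ exists with $\reallywidehat{\gamma_\nu}=\delta_\chi$, note that $\reallywidehat{\gamma_\mu}(\{\chi\})=0$ gives $\reallywidehat{\gamma_\mu}\perp\reallywidehat{\gamma_\nu}$, apply Theorem~\ref{thm:main} to obtain $\lb\mu,\chi\rb_{\cA}=0$, and then invoke Proposition~\ref{prop-FB}. Your write-up is slightly more explicit about the computation of $\gamma_\chi$ and the mutual singularity, but the argument is the same.
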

\begin{proof}
Let $\nu:=\chi \theta_{G}$. Then, $\gamma_{\nu}$ exists with respect to $\cA$ and
\[
\reallywidehat{\gamma_{\nu}}=\delta_{\chi} \,.
\]
Therefore, by Theorem~\ref{thm:main} the reflected Eberlein
convolution  $\lb \mu, \chi \rb_{\cA}$ exists and is zero. The claim
follows now from Proposition \ref{prop-FB}.
\end{proof}

\begin{remark}[Converse of previous proposition fails] The converse of the previous proposition is not true. Indeed, let
\[
\mu:= \sum_{m \in \ZZ} \mbox{sgn}(m) \delta_m \,.
\]
and let $A_n=[-n,n]$.
Then, it is clear that
\[
a_0^{\cA}(\mu)=0 \,.
\]
On another hand, the autocorrelation $\gamma_{\mu}$ exists with respect to $A_n$ and
\[
\gamma_{\mu}= \delta_{\ZZ} \,.
\]
It follows that
\[
\reallywidehat{\gamma_{\mu}}(\{0\})=1 \,.
\]
\end{remark}

The proposition can be used to study existence of the Fourier-Bohr coefficients.  This is carried out next.

Recall first that a set $A$ is called locally countable if $A \cap K$ is countable for all compact sets $K \subseteq G$. If $G$ is $\sigma$-compact, then $A \subseteq G$ is locally countable if and only if it is countable.

\begin{proposition}\label{cor:FB}  Let $\mu \in \cM^{\infty}(G)$ and $\cA$ a van Hove net such that the autocorrelation $\gamma$ of $\mu$ exists with respect to $\cA$. Then, there exists some locally countable set $A \subseteq \widehat{G}$ such that for all $\chi \notin A$ the Fourier--Bohr coefficient $a_{\chi}^{\cA}(\mu)$ exists and is zero. In particular, the set of $\chi \in \widehat{G}$ for which the following limit does not exist is locally countable:
\[
\lim_i \frac{1}{|A_i|} \int_{A_i} \overline{\chi(t)} \dd \mu(t) \,.
\]
\end{proposition}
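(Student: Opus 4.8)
The plan is to exploit Proposition~\ref{prop2} together with the basic structure of the positive measure $\reallywidehat{\gamma_\mu}$ on $\widehat{G}$. The key observation is that by Proposition~\ref{prop2}, the Fourier--Bohr coefficient $a_\chi^\cA(\mu)$ exists and equals zero for every $\chi\in\widehat{G}$ with $\reallywidehat{\gamma_\mu}(\{\chi\})=0$. Hence it suffices to take
\[
A:=\{\chi\in\widehat{G} : \reallywidehat{\gamma_\mu}(\{\chi\})>0\}\,,
\]
the set of atoms of the diffraction measure, and to verify that this set is locally countable.

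The main step, then, is to show that the set of atoms of $\reallywidehat{\gamma_\mu}$ is locally countable. First I would recall that $\reallywidehat{\gamma_\mu}$ is a \emph{translation bounded} positive measure on $\widehat{G}$; this is a standard fact about diffraction measures (it follows, e.g., from the discussion of translation bounded measures and positive definite measures in Section~\ref{section-setting}, since the autocorrelation $\gamma_\mu$ is a translation bounded positive definite measure and its Fourier transform is therefore translation bounded). Now fix a compact set $K\subseteq\widehat{G}$. Since $\reallywidehat{\gamma_\mu}$ is translation bounded, $\reallywidehat{\gamma_\mu}(K)<\infty$. For each $n\in\NN$ the set $\{\chi\in K : \reallywidehat{\gamma_\mu}(\{\chi\})\geq \tfrac1n\}$ has at most $n\cdot\reallywidehat{\gamma_\mu}(K)$ elements, hence is finite. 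Taking the union over $n$ shows that $A\cap K$ is countable, so $A$ is locally countable. (One does not even need the finiteness of $\reallywidehat{\gamma_\mu}(K)$ beyond knowing it is finite; local finiteness of the Radon measure $\reallywidehat{\gamma_\mu}$ suffices.)

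Having established that $A=\{\chi : \reallywidehat{\gamma_\mu}(\{\chi\})>0\}$ is locally countable, the first assertion of the Proposition follows immediately from Proposition~\ref{prop2}: for $\chi\notin A$ we have $\reallywidehat{\gamma_\mu}(\{\chi\})=0$, hence $a_\chi^\cA(\mu)$ exists and is zero. For the ``in particular'' clause, I would simply note that by definition the limit $\lim_i \frac{1}{|A_i|}\int_{A_i}\overline{\chi(t)}\dd\mu(t)$ is precisely the Fourier--Bohr coefficient $a_\chi^\cA(\mu)$, so the set where this limit fails to exist is contained in $A$ and is therefore locally countable. I expect the only genuine point requiring care is the invocation that $\reallywidehat{\gamma_\mu}$ is translation bounded (or at least a locally finite Radon measure), which is where the argument really uses that $\gamma_\mu$ is a positive definite measure with a well-defined Fourier transform; the counting argument itself is routine.
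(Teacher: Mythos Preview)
Your proof is correct and follows essentially the same route as the paper: define $A$ to be the set of atoms of $\reallywidehat{\gamma_\mu}$, observe it is locally countable, and invoke Proposition~\ref{prop2}. The paper simply states that the set of atoms of a measure is locally countable without further justification, whereas you spell out the standard counting argument; note that for this step you only need $\reallywidehat{\gamma_\mu}$ to be a Radon measure (local finiteness suffices), so the appeal to translation boundedness, while correct, is stronger than necessary.
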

\begin{proof}  Since $\widehat{\gamma}$ is a measure, the set
\[
A = \{ \chi : \widehat{\gamma}(\{ \chi \}) \neq  0\}
\]
is locally countable. The first claim now  follows from Proposition~\ref{prop2}. The 'In particular' statement now is an immediate consequence.
\end{proof}

\begin{corollary}[Existence of Fourier-Bohr-coefficients along a subsequence]\label{cor:FBexist}  Assume that $G$ is second countable. Let $\mu \in \cM^{\infty}(G)$ and $\cA$ a van Hove sequence. Then, there exists a subsequence $\cB$ of $\cA$ along which
the autocorrelation $\gamma$ and all Fourier--Bohr coefficients $a_{\chi}^{\cB}(\mu)$ exist.
\end{corollary}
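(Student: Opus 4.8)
The plan is to build the subsequence in two stages: first secure the autocorrelation, and then deal with the (at most countably many) ``exceptional'' frequencies, invoking Proposition~\ref{cor:FB} to dispose of all the rest at once.

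First I would pass to a subsequence $\cC$ of $\cA$ along which the autocorrelation exists. Since $G$ is second countable it is $\sigma$-compact, so van Hove sequences are available, and applying the countable version of the universal van Hove sequence argument discussed after Theorem~\ref{thm:univ} to the singleton set $M=\{\mu\}$ yields a subsequence $\cC$ of $\cA$ with $\gamma:=\lb\mu,\mu\rb_{\cC}$ existing. (Alternatively, one may directly invoke \cite[Thm.~4.15]{LSS3}.)

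Next I would apply Proposition~\ref{cor:FB} to $\mu$ along the van Hove sequence $\cC$. This produces a locally countable set $A\subseteq\widehat{G}$ such that $a^{\cC}_\chi(\mu)$ exists (and equals $0$) for every $\chi\notin A$. As $G$ is second countable, so is $\widehat{G}$ by item~(f) of the remark on second countable LCAGs; in particular $\widehat{G}$ is $\sigma$-compact, so the locally countable set $A$ is in fact countable. Enumerate $A=\{\chi_1,\chi_2,\dots\}$. For each fixed $j$ the numbers
\[
\frac{1}{|C_n|}\int_{C_n}\overline{\chi_j(t)}\,\dd\mu(t)
\]
form a bounded sequence in $\CC$, since $\mu$ is translation bounded and $\cC$ is a van Hove sequence, so that $\limsup_n |\mu|(C_n)/|C_n|<\infty$. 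Hence each such sequence has a convergent subsequence, and a standard diagonalisation over $j$ produces a subsequence $\cB$ of $\cC$ along which $a^{\cB}_{\chi_j}(\mu)$ exists for every $j$.

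Finally, since $\cB$ is a subsequence of $\cC$, every limit already established along $\cC$ persists along $\cB$ with the same value: the autocorrelation $\gamma=\lb\mu,\mu\rb_{\cB}$ exists, and $a^{\cB}_\chi(\mu)$ exists (and is $0$) for every $\chi\notin A$. Combined with the diagonalisation step this gives existence of $a^{\cB}_\chi(\mu)$ for all $\chi\in\widehat{G}$, which is the assertion. There is no serious obstacle; the only points needing a little care are that the exceptional set $A$ must be fixed together with $\cC$ before the second diagonalisation — $A$ depends on $\widehat{\gamma}$, hence on the chosen subsequence — and that passing to a further subsequence does not destroy convergence obtained earlier, both of which are immediate.
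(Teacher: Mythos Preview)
Your proof is correct and follows essentially the same route as the paper: first pass to a subsequence along which the autocorrelation exists, invoke Proposition~\ref{cor:FB} to reduce to a countable exceptional set of characters (using $\sigma$-compactness of $\widehat{G}$), and then diagonalise over that set. Your write-up is in fact a bit more careful than the paper's in justifying the boundedness of the averages and in making explicit that limits along $\cC$ persist along the further subsequence $\cB$.
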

\begin{proof} Pick first a sub sequence $\cA'=\{ A_{k_n} \}$ of $\cA$ along which the autocorrelation $\gamma$ exists. Since $\widehat{G}$ is $\sigma$-compact, by  Proposition~\ref{cor:FB}
the Fourier--Bohr coefficients exist outside a countable set $D$ of characters.

Let $\chi_1, \chi_2, \ldots, \chi_n, \ldots$ be an enumeration of $D$. By boundedness, there exists a subsequence $k(1,n)$ of $k_n$ such that the following limit exists.
\[
\lim_n \frac{1}{|A_{k(1,n)}|} \int_{A_{k(1,n)}} \overline{\chi_1(t)} \dd \mu(t)
\]
Inductively we can now construct a subsequence $k(m+1,n)$ of $k(m,n)$ along which
\[
\lim_n \frac{1}{|A_{k(n+1,n)}|} \int_{A_{k(m+1,n)}} \overline{\chi_{m+1}(t)} \dd \mu(t)
\]
exists.

A standard diagonalisation argument proves the claim.
\end{proof}

We now turn to another consequence  (or rather a reformulation) of Proposition \ref{prop2}.

\begin{corollary}[Bombieri--Taylor type result] Let $\mu$ be a translation bounded measure and assume that the autocorrelation $\gamma_{\mu}$ exists with respect to $\cA$.
If
\[
\lim_{i} \frac{1}{|A_i|} \int_{A_i} \overline{\chi(t)} \dd \mu(t) =0
\]
does not hold,  then
\[
\widehat{\gamma_{\mu}}(\{ \chi \}) \neq 0\,.
\]
\end{corollary}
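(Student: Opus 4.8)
The plan is to read this statement off as the contrapositive of Proposition~\ref{prop2}. First I would restate Proposition~\ref{prop2} in the form it will be used: if $\chi\in\widehat{G}$ satisfies $\widehat{\gamma_{\mu}}(\{\chi\}) = 0$, then the Fourier--Bohr coefficient $a_{\chi}^{\cA}(\mu)$ exists and is zero, i.e.\ the limit
\[
\lim_{i} \frac{1}{|A_i|} \int_{A_i} \overline{\chi(t)} \dd \mu(t)
\]
exists and equals $0$. (Recall that the proof of Proposition~\ref{prop2} proceeds by applying Theorem~\ref{thm:main} to the pair $\mu$ and $\nu = \chi\,\theta_{G}$, whose diffraction is $\delta_{\chi}$, and then invoking Proposition~\ref{prop-FB}; all of this is already available, so nothing new has to be proved.)

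Next I would contrapose. The hypothesis of the corollary is that the displayed limit being $0$ \emph{does not hold}; I read this --- as is standard --- as the disjunction ``either the limit exists and is nonzero, or the limit does not exist''. In either of these two cases the conclusion of Proposition~\ref{prop2} (``the limit exists and equals $0$'') fails. Hence the hypothesis of Proposition~\ref{prop2} must fail as well, and the negation of that hypothesis is exactly $\widehat{\gamma_{\mu}}(\{\chi\}) \neq 0$. This is precisely the assertion of the corollary, so the argument is complete.

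I do not expect any genuine obstacle here: the entire mathematical content sits in Proposition~\ref{prop2} (hence ultimately in Theorem~\ref{thm:main}), and this corollary is just a repackaging. The only point demanding a moment's care is the logical reading of ``does not hold'' as also covering the case in which the limit fails to exist, so that the contraposition against the ``exists and equals $0$'' conclusion of Proposition~\ref{prop2} is literally correct; once this is granted, the proof reduces to a single sentence.
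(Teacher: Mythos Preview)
Your proposal is correct and matches the paper's own proof essentially verbatim: the paper also argues by contradiction, assuming $\widehat{\gamma_{\mu}}(\{\chi\}) = 0$ and invoking Proposition~\ref{prop2} to force the limit to exist and equal $0$. Your explicit remark about the reading of ``does not hold'' (covering both nonexistence and existence with a nonzero value) is a welcome clarification that the paper leaves implicit.
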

\begin{proof}
Assume by contradiction that $\widehat{\gamma_{\mu}}(\{ \chi \}) =
0$. Then, by Proposition \ref{prop2} we have
\[
\lim_{n} \frac{1}{|A_n|} \int_{A_n} \overline{\chi(t)} \dd \mu(t) =0
\,.
\]
\end{proof}

An immediate consequence of the preceding corollary is the following.

\begin{corollary}\label{cor1}
Let $\mu$ be a translation bounded measure and $\cA$ a van Hove
sequence. Assume that the autocorrelation $\gamma_{\mu}$ of $\mu$
exists with respect to $\cA$ and that $\reallywidehat{\gamma_{\mu}}$
is a continuous measure. Then, all Fourier--Bohr coefficients
$a_\chi(\mu)$ exist and satisfy
\[
a_{\chi}^{\cA}(\mu)=0 \qquad \mbox{ for all } \chi \in \widehat{G} \,.
\]
\qed
\end{corollary}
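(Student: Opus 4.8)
The plan is to derive this directly from Proposition~\ref{prop2}, with no additional work. Recall that a positive measure on $\widehat{G}$ is called continuous precisely when it has no atoms, i.e. when it assigns mass zero to every singleton. Thus the hypothesis that $\reallywidehat{\gamma_{\mu}}$ is continuous says exactly that $\reallywidehat{\gamma_{\mu}}(\{\chi\}) = 0$ for every $\chi \in \widehat{G}$.

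Now fix an arbitrary $\chi \in \widehat{G}$. Since $\gamma_{\mu}$ exists with respect to the van Hove sequence $\cA$ (which is the same $\cA$ appearing in the statement, so the hypotheses of Proposition~\ref{prop2} are met verbatim) and $\reallywidehat{\gamma_{\mu}}(\{\chi\}) = 0$, Proposition~\ref{prop2} yields that the Fourier--Bohr coefficient $a_{\chi}^{\cA}(\mu)$ exists and equals $0$. As $\chi$ was arbitrary, this holds for all $\chi \in \widehat{G}$, which is the assertion.

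There is no real obstacle here; the statement is a packaging of Proposition~\ref{prop2} under the extra hypothesis of continuity of the diffraction. One could equally well phrase the argument by contradiction through the preceding Bombieri--Taylor type corollary: were the limit $\lim_i |A_i|^{-1}\int_{A_i}\overline{\chi(t)}\,\dd\mu(t) = 0$ to fail for some $\chi$, that corollary would force $\reallywidehat{\gamma_{\mu}}(\{\chi\}) \neq 0$, contradicting the absence of atoms. Either route gives the claim immediately.
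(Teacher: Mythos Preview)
Your proof is correct and matches the paper's approach: the paper flags this corollary as ``an immediate consequence of the preceding corollary'' (the Bombieri--Taylor type result, itself the contrapositive of Proposition~\ref{prop2}), and you invoke Proposition~\ref{prop2} directly while also noting the contrapositive route. The only cosmetic difference is that Proposition~\ref{prop2} is stated for van Hove nets while the corollary speaks of a van Hove sequence, but since every sequence is a net this causes no issue.
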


The preceding results have shown how non-vanishing of the
Fourier--Bohr coefficient implies non-vanishing of the pure point diffraction
component. We now turn to converse type of statements. These converses
tend to need some extra uniformity assumption.

\begin{lemma}Let $G$ be second countable and let $\cA$ be a van Hove sequence. Let $\mu$ be a translation bounded measure and assume that the autocorrelation $\gamma_{\mu}$ exists with respect to $\cA$.
If
\[
\widehat{\gamma_{\mu}}(\{ \chi \}) \neq 0 \,.
\]
then there exists some $t_n \in G$ such that
\[
\lim_{n} \frac{1}{|A_n|} \int_{t_n+A_n} \overline{\chi(t)} \dd \mu(t) \neq 0 \,.
\]
\end{lemma}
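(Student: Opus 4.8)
The plan is to reduce to the trivial character and then to read off the shifted coefficients from a mean‑ergodic limit inside the Hilbert space attached to $\mu$.

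\emph{Reduction.} First I would pass to $\nu:=\overline{\chi}\mu$, which is again translation bounded. Since restriction to a set commutes with multiplication by a function, and since $\widetilde{\overline{\chi}\sigma}=\overline{\chi}\,\widetilde{\sigma}$ and $(\overline{\chi}\sigma)\ast(\overline{\chi}\tau)=\overline{\chi}(\sigma\ast\tau)$ for finite measures $\sigma,\tau$ (this is exactly where $\chi$ being a character enters), one gets $(\nu|_{A_i})\ast\widetilde{(\nu|_{A_i})}=\overline{\chi}\bigl((\mu|_{A_i})\ast\widetilde{(\mu|_{A_i})}\bigr)$, so after taking the vague limit the autocorrelation $\gamma_{\nu}=\overline{\chi}\gamma_{\mu}$ exists with respect to $\cA$. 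A short computation with Proposition~\ref{char-ft-gamma} (or \cite{ARMA1}) identifies $\widehat{\gamma_{\nu}}$ with the push‑forward of $\widehat{\gamma_{\mu}}$ under $\xi\mapsto\overline{\chi}\xi$; in particular $\widehat{\gamma_{\nu}}(\{1\})=\widehat{\gamma_{\mu}}(\{\chi\})>0$. As $\int_{t+A_{n}}\overline{\chi(s)}\,\dd\mu(s)=\nu(t+A_{n})$ for every $t\in G$, it suffices to find $t_{n}\in G$ with $\tfrac{1}{|A_{n}|}\nu(t_{n}+A_{n})\not\to 0$; from now on I work with the representation $T$ on $\cH_{\nu}$ and the unitary $\Theta:\cH_{\nu}\to L^{2}(\widehat{G},\widehat{\gamma_{\nu}})$ of Theorem~\ref{diff-spec}.

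\emph{The ergodic input.} I fix $\varphi\in\Cc(G)$ with $\varphi\ge 0$ and $\int_{G}\varphi>0$, and put $S_{w}:=T_{-w}$, which is again a strongly continuous unitary representation of $G$ on $\cH_{\nu}$ with the same invariant vectors as $T$. Since $(A_{n})$ is a van Hove, hence F{\o}lner, sequence, the mean ergodic theorem gives
\[
\widetilde{g}_{n}\ :=\ \frac{1}{|A_{n}|}\int_{A_{n}}S_{w}(\nu\ast\varphi)\,\dd w\ \longrightarrow\ P(\nu\ast\varphi)\qquad\text{in }\cH_{\nu},
\]
where $P$ is the orthogonal projection onto the $T$‑invariant vectors. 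By Theorem~\ref{diff-spec} the map $\Theta$ turns $T_{t}$ into multiplication by $\xi(t)$, hence $P$ into multiplication by $\mathbf{1}_{\{1\}}$ and $\nu\ast\varphi$ into $\widehat{\varphi}$; therefore $\|P(\nu\ast\varphi)\|=|\widehat{\varphi}(1)|\,\|\mathbf{1}_{\{1\}}\|_{L^{2}(\widehat{\gamma_{\nu}})}=\bigl(\int_{G}\varphi\bigr)\sqrt{\widehat{\gamma_{\nu}}(\{1\})}$. Consequently $\|\widetilde{g}_{n}\|_{\cH_{\nu}}\to L:=\bigl(\int_{G}\varphi\bigr)\sqrt{\widehat{\gamma_{\mu}}(\{\chi\})}>0$.

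\emph{Back to the measure, and main obstacle.} Because $S_{w}(\nu\ast\varphi)=\nu\ast(\tau_{-w}\varphi)$, I pull the integral through the convolution: $\widetilde{g}_{n}=\nu\ast\Phi_{n}$ with $\Phi_{n}:=\tfrac{1}{|A_{n}|}\int_{A_{n}}\tau_{-w}\varphi\,\dd w=\tfrac{1}{|A_{n}|}\,\mathbf{1}_{-A_{n}}\ast\varphi\in\Cc(G)$, and by associativity $\widetilde{g}_{n}=G_{n}\ast\varphi$ where
\[
G_{n}(t)\ :=\ \frac{1}{|A_{n}|}\bigl(\nu\ast\mathbf{1}_{-A_{n}}\bigr)(t)\ =\ \frac{1}{|A_{n}|}\,\nu(t+A_{n})\ =\ \frac{1}{|A_{n}|}\int_{t+A_{n}}\overline{\chi(s)}\,\dd\mu(s).
\]
Since $\widetilde{g}_{n}=\nu\ast\Phi_{n}\in H_{\nu}$, we have $\|\widetilde{g}_{n}\|_{\cH_{\nu}}^{2}=M_{\cA}(|\widetilde{g}_{n}|^{2})\le\|\widetilde{g}_{n}\|_{\infty}^{2}$ because $M_{\cA}$ is an average, while $\|\widetilde{g}_{n}\|_{\infty}=\|G_{n}\ast\varphi\|_{\infty}\le\|G_{n}\|_{\infty}\|\varphi\|_{1}$. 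Using $\|\varphi\|_{1}=\int_{G}\varphi$ and the previous paragraph,
\[
\|G_{n}\|_{\infty}\ \ge\ \frac{\|\widetilde{g}_{n}\|_{\cH_{\nu}}}{\|\varphi\|_{1}}\ \longrightarrow\ \sqrt{\widehat{\gamma_{\mu}}(\{\chi\})}\ >\ 0,
\]
so picking for each $n$ a point $t_{n}$ with $|G_{n}(t_{n})|\ge\tfrac12\|G_{n}\|_{\infty}$ yields $\liminf_{n}|G_{n}(t_{n})|>0$; thus $\tfrac{1}{|A_{n}|}\int_{t_{n}+A_{n}}\overline{\chi}\,\dd\mu$ does not converge to $0$, and passing to a subsequence along which it converges gives an honest nonzero limit. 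The real content is the middle step — a genuine point mass of the diffraction is a non‑trivial eigenvalue of $T$, and the mean ergodic projection onto that eigenspace does not annihilate $\nu\ast\varphi$; the parts that require care rather than ideas are the bookkeeping in the reduction (behaviour of $\gamma$ and $\widehat{\gamma}$ under multiplication by $\overline{\chi}$) and turning the abstract $\cH_{\nu}$‑limit back into a pointwise statement, which is cheap only because $\|\cdot\|_{\cH_{\nu}}\le\|\cdot\|_{\infty}$ on $H_{\nu}$.
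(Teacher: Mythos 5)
Your argument is correct, but it takes a genuinely different route from the paper. The paper disposes of the lemma in two lines by contraposition and citation: if the averages $\frac{1}{|A_n|}\int_{t_n+A_n}\overline{\chi}\,\dd\mu$ tended to zero for every choice of $(t_n)$, then by \cite{LSS} the Fourier--Bohr coefficient $a_\chi(\mu)$ would exist uniformly (and be $0$), and by \cite{Str} the consistent phase property would force $\widehat{\gamma_\mu}(\{\chi\})=|a_\chi(\mu)|^2=0$. You instead give a self-contained proof inside the machinery the paper has already built: after twisting by $\overline{\chi}$ (your bookkeeping for $\gamma_{\overline{\chi}\mu}=\overline{\chi}\gamma_\mu$ and the shift of the diffraction is correct), you apply the von Neumann mean ergodic theorem along the van Hove (hence F{\o}lner) sequence to the representation $T$ of Theorem~\ref{T2}, use Theorem~\ref{diff-spec} to identify the ergodic projection with multiplication by $\mathbf{1}_{\{1\}}$, and then convert the resulting lower bound on $\|\widetilde g_n\|_{\cH_\nu}$ into a lower bound on $\sup_t|\frac{1}{|A_n|}\nu(t+A_n)|$ via $\|\cdot\|_{\cH_\nu}\le\|\cdot\|_\infty$ on $H_\nu$ and Young's inequality. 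This buys a quantitative statement, $\liminf_n\sup_t\bigl|\frac{1}{|A_n|}\int_{t+A_n}\overline{\chi}\,\dd\mu\bigr|\ge\sqrt{\widehat{\gamma_\mu}(\{\chi\})}$, and avoids invoking \cite{LSS,Str}, essentially re-deriving the relevant half of the uniform consistent-phase inequality in the spirit of Hof/Lenz; the price is the routine but real verifications you flag (identifying the Bochner integral with $\nu\ast\Phi_n$, and identifying the invariant-vector projection with $E(\{1\})$, where the second countability hypothesis is what lets you pass from the $t$-dependent null sets to $\mathbf{1}_{\{1\}}$). Two minor remarks: your conclusion is that the averages along $(t_n)$ are bounded away from $0$, i.e.\ do not converge to $0$; the closing suggestion to pass to a subsequence to get an ``honest limit'' changes the averaging sequence and is not needed -- what you prove is exactly the (intended, contrapositive) content of the lemma, which is also all the paper's own proof establishes and all that Corollary~\ref{cor11} uses.
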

\begin{proof}
Assume by contradiction that $t_n \in G$ such that
\[
\lim_{n} \frac{1}{|A_n|} \int_{t_n+A_n} \overline{\chi(t)} \dd \mu(t) = 0 \,.
\]
Then, by \cite{LSS}, the Fourier--Bohr coefficient $a_{\chi}(\mu)$ exists uniformly. Therefore, by \cite{Str}, the CPP holds, that is
\[
\widehat{\gamma_{\mu}}(\{ \chi \})= |a_{\chi}(\mu)|^2=0 \,.
\]
\end{proof}

Combining all results in this section, we get the following
consequence.

\begin{corollary}\label{cor11} Let $G$ be second countable, let $\mu$ be a translation bounded measure on $G$  and let $\cA$ be a fixed van Hove sequence.

Let $\Gamma$ be the set of all vague cluster points of the sequences
\[
\frac{1}{|A_n|} (\mu|_{t_n+A_n}\ast\widetilde{\mu|_{t_n+A_n}})
\]
for all choices of $t_n \in G$.

Let $A$ be the set of all vague cluster points of the sequences
\[
\frac{1}{|A_n|} \int_{t_n+A_n} \overline{\chi(t)} \dd \mu(t)
\]
for all choices of $t_n \in G$.

Then, for $\chi \in \widehat{G}$, the following are equivalent:
\begin{itemize}
  \item[(a)] $\widehat{\gamma}(\{ \chi \})=0$ for all $\gamma \in \Gamma$.
  \item[(b)] $a_\chi=0$ for all $a_\chi \in A$.
  \item[(c)] The Fourier--Bohr coefficient $a_{\chi}(\mu)$ exists uniformly and is zero.
\end{itemize}

\end{corollary}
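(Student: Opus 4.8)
The plan is to prove the chain of implications $(a)\Rightarrow(b)\Rightarrow(c)\Rightarrow(a)$, assembling the pieces already developed in this section. The key observation is that each of the three conditions concerns \emph{all} translates $t_n+A_n$ simultaneously, so the natural strategy is to reduce statements about the family of translated averages to statements about ordinary (untranslated) autocorrelations by passing to subsequences. Concretely, given any sequence $(t_n)$ in $G$, after passing to a subsequence the translated cut-off autocorrelation $\frac{1}{|A_n|}(\mu|_{t_n+A_n}\ast\widetilde{\mu|_{t_n+A_n}})$ converges vaguely to some $\gamma\in\Gamma$; by part (a) of Lemma~\ref{thm:rebe props} such a subsequence always exists. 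Simultaneously, by Corollary~\ref{cor:FBexist} applied to the van Hove sequence $(t_n+A_n)_n$ (which is again a van Hove sequence, since translating does not change boundary-to-volume ratios), one may further refine to a subsequence along which every Fourier--Bohr coefficient with respect to $(t_{n_k}+A_{n_k})_k$ exists; these coefficients are exactly the cluster points in $A$. Thus $\gamma\in\Gamma$ and the corresponding $a_\chi\in A$ are the autocorrelation and the $\chi$-Fourier--Bohr coefficient of $\mu$ along the \emph{same} van Hove sequence $(t_{n_k}+A_{n_k})_k$.

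For $(a)\Rightarrow(b)$: fix $a_\chi\in A$, realised as $\lim_k \frac{1}{|A_{n_k}|}\int_{t_{n_k}+A_{n_k}}\overline{\chi(t)}\,\dd\mu(t)$ along a van Hove sequence $\cB=(t_{n_k}+A_{n_k})_k$. Refining further as above, we may assume $\gamma_\mu$ exists along $\cB$, and it lies in $\Gamma$ (translation invariance of the Eberlein convolution, Lemma~\ref{thm:rebe props}(b), guarantees the limiting autocorrelation does not depend on the choice of centering sequence in the usual case, but here we only need that it is \emph{some} element of $\Gamma$, which is immediate). By hypothesis (a), $\reallywidehat{\gamma_\mu}(\{\chi\})=0$, so Proposition~\ref{prop2} gives $a_\chi^{\cB}(\mu)=0$, i.e.\ $a_\chi=0$. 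For $(b)\Rightarrow(c)$: the hypothesis says that \emph{every} cluster point of the translated averages $\frac{1}{|A_n|}\int_{t_n+A_n}\overline{\chi(t)}\,\dd\mu(t)$, over all choices of $(t_n)$, equals zero; since these averages are uniformly bounded (by $\|\mu\|_V$ times a constant, as $\mu$ is translation bounded), having all cluster points equal to $0$ is precisely the statement that $\frac{1}{|A_n|}\int_{t_n+A_n}\overline{\chi(t)}\,\dd\mu(t)\to 0$ uniformly in $(t_n)$, which is the definition of the Fourier--Bohr coefficient $a_\chi(\mu)$ existing uniformly and being zero. For $(c)\Rightarrow(a)$: this is the content of the Lemma immediately preceding this corollary (the CPP/Str direction), applied along each van Hove sequence realising an element of $\Gamma$; uniform existence of $a_\chi(\mu)$ forces $\reallywidehat{\gamma}(\{\chi\})=|a_\chi(\mu)|^2=0$ for every such $\gamma$, hence for all $\gamma\in\Gamma$.

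The main obstacle is bookkeeping the subsequences correctly: one must ensure that when extracting a subsequence along which a translated cut-off autocorrelation converges, one can \emph{simultaneously} arrange convergence of the relevant Fourier--Bohr coefficient, so that both belong to the appropriate cluster sets and are tied to one common van Hove sequence. This is handled by the standard diagonalisation already invoked in Corollary~\ref{cor:FBexist}, together with the remark that $(t_n+A_n)_n$ is a van Hove sequence whenever $(A_n)_n$ is. A secondary subtlety is the equivalence ``all cluster points zero $\Leftrightarrow$ uniform convergence to zero'' in $(b)\Leftrightarrow(c)$: this uses compactness of the closed ball in $\CC$ in which all these averages live, so that a bounded net failing to converge uniformly to $0$ must have a nonzero cluster point realised by some choice of $(t_n)$. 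With those two points in place the remaining implications are direct citations of Proposition~\ref{prop2} and the preceding Lemma.
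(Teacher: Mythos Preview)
Your proof is correct and follows exactly the approach the paper intends: the paper gives no explicit argument beyond ``Combining all results in this section, we get the following consequence,'' and your chain $(a)\Rightarrow(b)$ via Proposition~\ref{prop2}, $(b)\Leftrightarrow(c)$ via the bounded-cluster-point argument, and $(c)\Rightarrow(a)$ via the preceding Lemma (uniform existence $\Rightarrow$ CPP $\Rightarrow$ $\widehat{\gamma}(\{\chi\})=|a_\chi|^2$) is precisely the assembly of those results. Two minor remarks: in $(a)\Rightarrow(b)$ you do not actually need Corollary~\ref{cor:FBexist}, since only the single coefficient at $\chi$ is at stake and bounded-sequence compactness suffices; and your parenthetical about Lemma~\ref{thm:rebe props}(b) is, as you yourself note, unnecessary---the only thing needed is that the limiting autocorrelation along the refined subsequence is \emph{some} element of $\Gamma$, which follows by extending $(t_{n_k})$ arbitrarily to a full sequence indexed by $n$.
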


\begin{remark}  Let $\varOmega_\mu$ be the hull of $\mu$ (compare (b) of  Remark \ref{rem-hull}).
The set $\Gamma$ represents the set of all autocorrelations of
elements $\omega \in \varOmega_\mu$ calculated along subsequences of
$A_n$. Same way, $A$ represents the set of all Fourier--Bohr
coefficients of $\omega \in \varOmega_\mu$ calculated along subsequences
of $A_n$.

Therefore, Corollary~\ref{cor11} says that, for a fixed $\chi \in \widehat{G}$, the following are equivalent:
\begin{itemize}
  \item[(a)] $\chi$ is not a Bragg peak for any $\omega \in \varOmega_\mu$.
  \item[(b)] $a_\chi^{\cA}(\omega)=0$ for all $\omega \in \varOmega_\mu$.
  \item[(c)] $a_\chi^{\cA}(\omega)=0$ uniformly for all $\omega \in \varOmega_\mu$.
\end{itemize}
\end{remark}

\medskip

We also have the following characterisation of absence of point
spectrum in the diffraction.

\begin{proposition} Let $G$ be second countable, let $\mu \in \cM^\infty(G)$ be a measure and $\cA= \{A_n \}$ be a van Hove sequence such that for all sequences $t_n \in G$, every cluster points of
\[
\frac{1}{|A_n|} \mu|_{t_n+A_n}\ast\widetilde{\mu|_{t_n+A_n}}
\]
has continuous Fourier transform. Then,
\begin{itemize}
  \item[(a)] The Fourier--Bohr coefficients $a_{\chi}(\mu)$ exist uniformly and $a_{\chi}(\mu)=0$.
  \item[(b)] If $\eta$ is the autocorrelation of $\mu$ with respect to some van Hove sequence, then $\widehat{\eta}$ is continuous.
\end{itemize}
\end{proposition}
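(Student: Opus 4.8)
The plan is to read off both statements from Corollary~\ref{cor11}, complemented by the consistent phase property of \cite{Str}. The starting observation is a harmless reformulation of the hypothesis: a positive measure on $\widehat G$ is continuous (i.e.\ atomless) exactly when it gives no mass to any singleton, so $\gamma$ having continuous Fourier transform means $\widehat\gamma(\{\chi\})=0$ for every $\chi\in\widehat G$. Hence the assumption says that $\widehat\gamma(\{\chi\})=0$ for every $\chi$ and every vague cluster point $\gamma$ of $\frac1{|A_n|}\mu|_{t_n+A_n}\ast\widetilde{\mu|_{t_n+A_n}}$ taken over all sequences $(t_n)$ in $G$; in the notation of Corollary~\ref{cor11} this is precisely condition (a) of that corollary, valid for every character $\chi$.

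For part (a) I would then invoke the implication (a)$\Rightarrow$(c) of Corollary~\ref{cor11} separately for each $\chi\in\widehat G$: it yields that the Fourier--Bohr coefficient $a_\chi(\mu)$ exists uniformly and equals $0$. Since $\chi$ is arbitrary this is exactly the claim of (a), with no further argument needed.

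For part (b), let $\eta$ be the autocorrelation of $\mu$ along some van Hove sequence $\cB$ and fix $\chi\in\widehat G$. By (a) the uniform Fourier--Bohr coefficient of $\mu$ at $\chi$ exists and vanishes; recalling that uniform existence of a Fourier--Bohr coefficient, and its value, do not depend on the chosen van Hove sequence (see \cite{LSS}), it follows that $a_\chi(\mu)$ exists uniformly along $\cB$ as well, with value $0$. The consistent phase property \cite{Str}, applied relative to $\cB$, then gives $\widehat\eta(\{\chi\})=|a_\chi(\mu)|^2=0$; as $\chi$ was arbitrary, $\widehat\eta$ is atomless, that is, continuous.

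The only genuinely delicate point is the change of van Hove sequence in part (b): Corollary~\ref{cor11} and the cited consistent phase property are stated for a fixed van Hove sequence, whereas $\eta$ may be computed along a different one. The step to be careful with is thus the van-Hove-independence of uniform Fourier--Bohr coefficients, which I would quote from \cite{LSS} rather than reprove. Everything else is a direct consequence of the equivalences already established in this section.
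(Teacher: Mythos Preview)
Your proposal is correct and follows the paper's route in substance. For part~(a) the paper argues directly via Corollary~\ref{cor1} (taking a cluster point of the Fourier--Bohr expression, passing to a sub-subsequence along which an autocorrelation in $\Gamma$ exists, and concluding the cluster point vanishes), whereas you invoke the already-packaged equivalence (a)$\Rightarrow$(c) of Corollary~\ref{cor11}; since that implication is itself established by precisely the same subsequence-and-Corollary~\ref{cor1} argument, the two proofs unwind to the same reasoning. For part~(b) your argument coincides with the paper's: both transfer uniform Fourier--Bohr coefficients to an arbitrary van Hove sequence via \cite{LSS} and then apply the consistent phase property from \cite{Str} (the paper also cites \cite{Len}).
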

\begin{proof}
\textbf{(a)}  Fix $t_n \in G$. Since
\[
\frac{1}{|A_n|} \int_{t_n+A_n} \overline{\chi(t)} \dd \mu(t)
\]
is bounded, it converges to $0$ if and only if $0$ is the only cluster point of this sequence.

Pick a subsequence $(k_n)$ such that
\[
c= \lim_n \frac{1}{|A_{k_n}|} \int_{t_{k_n}+A_{k_n}} \overline{\chi(t)} \dd \mu(t) \,,
\]
exists. Now, by translation boundedness, there exists some subsequence $l_n$ of $k_n$ such that the autocorrelation $\gamma$ of $\mu$ exists along $t_{l_n}+A_{l_n}$.
By the condition in the statement, $\widehat{\gamma}$ is continuous, and hence, by Corollary~\ref{cor1} we get $c=0$.

This shows that the Fourier--Bohr coefficient $a_\chi(\mu)$ exists and is $0$ for all translates of $A_n$, and hence it exists uniformly by \cite{LSS}.

\textbf{(b)} Since the Fourier--Bohr coefficients exist uniformly and are 0, they exist uniformly and are 0 for all choices of van Hove sequence \cite{LSS}.
Therefore, the CPP holds for $\eta$ by \cite{Len,Str} (compare \cite{HOF,HOF1} for $G=\RR^d$). This shows that $\widehat{\eta}$ is continuous.
\end{proof}

\begin{remark}[Bombieri--Taylor] A substantial part of the recent interest in diffraction
theory comes from the discovery of quasicrystals by Shechtman
\cite{She}, which was later honored with a nobel prize. The
characteristic feature of quasicrystals is pure point diffraction
together with symmetries which exclude periods. Accordingly, a key
point in the theoretical investigation is the study of pure point
diffraction. Here, a particular issue is to compute the atoms of
$\widehat{\gamma_\mu}$ (assuming that $\widehat{\gamma_\mu}$ is a
pure point measure). From the very beginning the idea was that the
atoms of $\widehat{\gamma_\mu}$ are exactly those $\xi$ where the
Fourier--Bohr coefficient does not vanish. Indeed, this is assumed
in large parts of the physics literature.  In the mathematics
literature this is sometimes known as Bombieri--Taylor conjecture
(after  \cite{BT1,BT2} where this was assumed without any
reasoning). An even stronger condition found in many places  is that
$\widehat{\gamma_\mu}(\{\xi\} = |a_\xi|^2$ holds (see for example
\cite{Oli,Lag,Len,LSS,LSS2}). This condition is known as consistent
phase property. Our treatment above provides the most general
treatment of Bombieri--Taylor conjecture so far. In particular, it is
not restricted to examples coming from (uniquely) ergodic dynamical
systems.  However,  it does not prove the  consistent phase
property.
\end{remark}

\section{Application: Existence of the refined Eberlein decomposition of an arbitrary measure}

One  open problem in diffraction theory is the following question:

\begin{question} For which $\omega \in \cM^\infty(G)$ and van
Hove net $\cA$ such that $\gamma$ exists (with respect to
$\cA$) can we find some measures $\omega_{dpp}, \omega_{dac},
\omega_{dsc} \in \cM^\infty(G)$ with the following properties:
\begin{itemize}
  \item[(a)] $\omega=\omega_{dpp} + \omega_{dac}+ \omega_{dsc}$.
  \item[(b)] The autocorrelations $\eta_{dpp},\eta_{dsc}, \eta_{dac}$ of $\omega_{dpp}, \omega_{dac}, \omega_{dsc}$ exist with respect to $\cA$ and
  \[
    \widehat{\eta_{dpp}}=\left( \widehat{\gamma} \right)_{pp}  \mbox{ and }  \widehat{\eta_{dac}}=\left( \widehat{\gamma} \right)_{ac} \mbox{ and }
    \widehat{\eta_{dsc}}=\left( \widehat{\gamma} \right)_{sc} \,.
    \]
  \end{itemize}
\end{question}
We will refer to any such decomposition as  \emph{refined Eberlein
decomposition of $\omega$}.

We are not able to answer this question here but we note that our
main result (Theorem \ref{thm:main})  has the following consequence:

\begin{corollary}\label{cor3}
Let $\mu, \nu,\omega$ be translation bounded measures and $\cA$ a
van Hove net such that the autocorrelations $\gamma_{\mu}$ of
$\mu$, $\gamma_{\nu}$ of $\nu$ and $\gamma_{\omega}$ of $\omega$
exist with respect to $\cA$. If $\reallywidehat{\gamma_{\mu}}$ is
pure point, $\reallywidehat{\gamma_{\nu}}$ is absolutely continuous
and $\reallywidehat{\gamma_{\omega}}$ of $\omega$ is singular
continuous, then
\[
\lb \mu, \nu \rb_{\cA}=\lb \mu, \omega \rb_{\cA}=\lb \nu, \omega
\rb_{\cA}=0 \,.
\]
In particular, for all $a,b,c \in \CC$ the autocorrelation of
$a\mu+b\nu+c\omega$ exists with respect to $\cA$ and
\[
\left(\reallywidehat{\gamma_{a\mu+b\nu+c \omega }}\right)_{pp}= |a|^2 \reallywidehat{\gamma_{\mu}} \mbox{ and }
\left(\reallywidehat{\gamma_{a\mu+b\nu+c\omega}}\right)_{ac}=  |b|^2 \reallywidehat{\gamma_{\nu}} \mbox{ and }
\left(\reallywidehat{\gamma_{a\mu+b\nu+c\omega}}\right)_{sc}=
|c|^2 \reallywidehat{\gamma_{\omega}}\,.
\]
\qed
\end{corollary}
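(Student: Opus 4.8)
The plan is to read off the corollary directly from Theorem~\ref{thm:main} together with the bilinearity properties of the reflected Eberlein convolution collected in Lemma~\ref{thm:rebe props}. First I would note that the three diffraction measures $\reallywidehat{\gamma_{\mu}}$, $\reallywidehat{\gamma_{\nu}}$, $\reallywidehat{\gamma_{\omega}}$ are pairwise mutually singular: $\reallywidehat{\gamma_{\nu}}$ (absolutely continuous) and $\reallywidehat{\gamma_{\omega}}$ (singular continuous) are mutually singular by the very definition of the Lebesgue decomposition with respect to Haar measure on $\widehat{G}$, while the pure point measure $\reallywidehat{\gamma_{\mu}}$ is concentrated on a locally countable set of atoms and both $\reallywidehat{\gamma_{\nu}}$ and $\reallywidehat{\gamma_{\omega}}$ are atomless, so $\reallywidehat{\gamma_{\mu}}\perp\reallywidehat{\gamma_{\nu}}$ and $\reallywidehat{\gamma_{\mu}}\perp\reallywidehat{\gamma_{\omega}}$ as well. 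Applying Theorem~\ref{thm:main} to each of the three pairs then yields at once that $\lb\mu,\nu\rb_{\cA}$, $\lb\mu,\omega\rb_{\cA}$ and $\lb\nu,\omega\rb_{\cA}$ all exist and vanish.

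For the \emph{in particular} part I would fix $a,b,c\in\CC$ and expand the autocorrelation of $a\mu+b\nu+c\omega$. All nine pairwise reflected Eberlein convolutions among $\mu,\nu,\omega$ now exist: the three autocorrelations $\gamma_{\mu},\gamma_{\nu},\gamma_{\omega}$ by hypothesis, the three mixed ones by the previous step, and the remaining three by Lemma~\ref{thm:rebe props}(c), which moreover shows they vanish, being conjugates of zero. Hence Lemma~\ref{thm:rebe props}(d), combined with (c) to obtain conjugate-linearity in the second argument, shows that $\gamma_{a\mu+b\nu+c\omega}=\lb a\mu+b\nu+c\omega,\,a\mu+b\nu+c\omega\rb_{\cA}$ exists and splits into nine terms, the six off-diagonal ones being zero; thus $\gamma_{a\mu+b\nu+c\omega}=|a|^2\gamma_{\mu}+|b|^2\gamma_{\nu}+|c|^2\gamma_{\omega}$. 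Taking Fourier transforms gives $\reallywidehat{\gamma_{a\mu+b\nu+c\omega}}=|a|^2\reallywidehat{\gamma_{\mu}}+|b|^2\reallywidehat{\gamma_{\nu}}+|c|^2\reallywidehat{\gamma_{\omega}}$, and since $\reallywidehat{\gamma_{\mu}}$ is pure point, $\reallywidehat{\gamma_{\nu}}$ absolutely continuous and $\reallywidehat{\gamma_{\omega}}$ singular continuous, uniqueness of the decomposition of a positive Radon measure on $\widehat{G}$ into its pure point, absolutely continuous and singular continuous parts lets one read off the three claimed identities.

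The argument is essentially bookkeeping once Theorem~\ref{thm:main} is available; the only point that genuinely needs a line of justification is the mutual singularity of a pure point and a singular continuous measure (atomic versus atomless), together with the implicit appeal to uniqueness of the pp/ac/sc decomposition. I do not anticipate any real obstacle here, so this should remain at the level of a short corollary-style proof (which is presumably why the statement is given with \texttt{\textbackslash qed}).
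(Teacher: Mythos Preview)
Your proof is correct and follows exactly the approach the paper intends: the statement is left with a bare \qed because it is a direct three-term analogue of the Pythagoras-type corollary proved immediately after Theorem~\ref{thm:main}, and your argument spells out precisely that expansion (mutual singularity of the pp/ac/sc parts, Theorem~\ref{thm:main} for the cross terms, bilinearity from Lemma~\ref{thm:rebe props}, then Fourier transform and uniqueness of the Lebesgue decomposition).
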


This allows us to give the  following characterisation for existence
of an Eberlein decomposition.

\begin{theorem} Let $\omega$ be a translation bounded measure whose autocorrelation $\gamma$ exists with respect to $\cA$. Then, the following are equivalent:
\begin{itemize}
  \item[(i)] The refined Eberlein decomposition of $\omega$ exists with respect to $\cA$.
  \item[(ii)]
There exists some measures $ \omega_{1},  \omega_{2}, \omega_3$ such
that $\omega=\omega_1+\omega_2+\omega_3$, the autocorrelations of
$\gamma_1, \gamma_2, \gamma_3$ of $\omega_1, \omega_2, \omega_3$
exists with respect to $\cA$ and
$\widehat{\gamma_1},\widehat{\gamma_2}, \widehat{\gamma_3}$ are pure
point, absolutely continuous and singular continuous, respectively.
\end{itemize}
\end{theorem}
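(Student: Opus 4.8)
The plan is to dispatch the implication (i)$\Rightarrow$(ii) by inspection and to reduce the converse (ii)$\Rightarrow$(i) to Corollary~\ref{cor3}. For (i)$\Rightarrow$(ii): if $\omega = \omega_{dpp}+\omega_{dac}+\omega_{dsc}$ is a refined Eberlein decomposition, I would simply take $\omega_1:=\omega_{dpp}$, $\omega_2:=\omega_{dac}$, $\omega_3:=\omega_{dsc}$; by the very definition of such a decomposition their autocorrelations exist with respect to $\cA$ and have Fourier transforms $\left(\widehat{\gamma}\right)_{pp}$, $\left(\widehat{\gamma}\right)_{ac}$, $\left(\widehat{\gamma}\right)_{sc}$, which are pure point, absolutely continuous and singular continuous respectively. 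Nothing more is needed here.

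For the substantive direction (ii)$\Rightarrow$(i), suppose a decomposition $\omega=\omega_1+\omega_2+\omega_3$ as in (ii) is given, with autocorrelations $\gamma_1,\gamma_2,\gamma_3$ (existing with respect to $\cA$) whose Fourier transforms are pure point, absolutely continuous and singular continuous respectively. I would apply Corollary~\ref{cor3} to the triple $(\omega_1,\omega_2,\omega_3)$ in place of $(\mu,\nu,\omega)$ there; its hypotheses are met verbatim. This yields that all cross terms $\lb\omega_i,\omega_j\rb_{\cA}$ with $i\neq j$ vanish, that the autocorrelation of $\omega_1+\omega_2+\omega_3$ exists with respect to $\cA$, and, taking $a=b=c=1$ in the ``In particular'' clause of that corollary, that the diffraction of this sum has pure point part $\widehat{\gamma_1}$, absolutely continuous part $\widehat{\gamma_2}$ and singular continuous part $\widehat{\gamma_3}$.

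To conclude, I would invoke that $\omega_1+\omega_2+\omega_3=\omega$ together with the standing hypothesis that $\gamma$ is the autocorrelation of $\omega$ with respect to $\cA$: since an autocorrelation is a vague limit, hence unique whenever it exists, the measure just produced coincides with $\gamma$, and therefore $\left(\widehat{\gamma}\right)_{pp}=\widehat{\gamma_1}$, $\left(\widehat{\gamma}\right)_{ac}=\widehat{\gamma_2}$, $\left(\widehat{\gamma}\right)_{sc}=\widehat{\gamma_3}$. Then $\omega_{dpp}:=\omega_1$, $\omega_{dac}:=\omega_2$, $\omega_{dsc}:=\omega_3$ with autocorrelations $\gamma_1,\gamma_2,\gamma_3$ is, by definition, a refined Eberlein decomposition of $\omega$, which is (i). There is no genuine obstacle in the argument; the only point worth a remark is that the three diffraction types appearing in (ii) are automatically pairwise mutually singular — this being the uniqueness of the Lebesgue-type decomposition of a positive measure on $\widehat{G}$ into pure point, absolutely continuous and singular continuous parts — and it is precisely this that makes Corollary~\ref{cor3}, and through it Theorem~\ref{thm:main}, applicable.
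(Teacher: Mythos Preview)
Your proof is correct and follows exactly the paper's approach: the paper dispatches (i)$\Rightarrow$(ii) as ``obvious'' and reduces (ii)$\Rightarrow$(i) to Corollary~\ref{cor3}, which is precisely what you do with the details spelled out. Your additional remark on uniqueness of the vague limit to identify the resulting autocorrelation with $\gamma$ is a useful clarification that the paper leaves implicit.
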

\begin{proof}
(i) $\Longrightarrow$ (ii) is obvious.

(ii) $\Longrightarrow$ (i) is Corollary~\ref{cor3}.
\end{proof}

Let us note here in passing that this result simply says that to get a refined Eberlein decomposition we only need to write our measure $\omega$ as the sum of three measures
of pairwise distinct spectral purity. If this is the case, then the diffractions of three measures give us exactly the Lebesgue decomposition of the diffraction of the initial measure.

\section{Application: Orthogonality of dynamical systems}\label{section-dynamical-systems}
In this section we assume that  $G$ is not only  a locally compact
Abelian group but also $\sigma$-compact. Let $\lambda$ be the Haar
measure on $G$. A dynamical system over $G$ is a triple
$(X,\alpha,m)$ consisting of a compact space $X$, a
continuous  action $\alpha$ of $G$ on $X$ and an $\alpha$
invariant probability measure $m$. The dynamical system is called
\textit{ergodic} if any $\alpha$-invariant measurable set has measure in
$\{0,1\}$. Whenever $(X,\alpha,m)$ is an ergodic dynamical
system and $\cA$ is a van Hove sequence, we say that
\textit{Birkhoff theorem holds along $\cA$} if  for any integrable
$f$ the equality
\[
\lim_{n\to \infty} \frac{1}{|A_n|} \int_{A_n} f(\alpha_t x) \dd t = \int_X f(x) \dd m
\]
holds for $m$ almost every $x\in X$. Any ergodic dynamical
system admits a van Hove sequence along which Birkhoff ergodic
theorem holds \cite{Lin}.

Any dynamical system comes with a unitary representation $T=
T^X$ of $G$ on $L^2 (X,m)$ given by $T_t f = f\circ
\alpha_t$. The spectral measure of $f\in L^2 (X,m)$ is denoted
by $\varrho_f$.

For $f :X \longrightarrow \CC$ and $x\in X$ we define
 $f_x : G\longrightarrow \CC$ by
\[
f_x (t) :=f(\alpha_t x)\,.
\]

We now provide a variant of \cite{Len2}:

\begin{proposition} Assume that $G$ is $\sigma$-compact and has a dense countable set.
Consider an ergodic dynamical system $(X,\alpha,m)$. Let $\cA$
be a van Hove sequence along which the Birkhoff ergodic theorem
holds. Then, for any  $f \in C(X)$
and almost every $x\in X$
the reflected Eberlein convolution $\lb f_x, f_x \rb_{\cA}$ exists and
\[
\lb f_x, f_x \rb_{\cA}(t)=\widecheck{\sigma_f}(t) \,.
\]\qed
\end{proposition}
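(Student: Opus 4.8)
The plan is to recognise $\frac{1}{|A_n|}\int_{A_n} f_x(s)\overline{f_x(s-t)}\,\dd s$ as the Birkhoff average of a fixed continuous function on $X$, apply the ergodic theorem along $\cA$, and then upgrade the resulting ``for each $t$, for $m$-a.e.\ $x$'' statement to ``for $m$-a.e.\ $x$, for all $t$'' using the countable dense subset of $G$ together with the closedness statement in Lemma~\ref{lem4}.

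First I would fix $t\in G$ and set $\phi_t:=f\cdot\overline{(f\circ\alpha_{-t})}\in C(X)$. Since $\alpha_{s-t}x=\alpha_{-t}(\alpha_s x)$ one has $f_x(s)\overline{f_x(s-t)}=f(\alpha_s x)\overline{f(\alpha_{s-t}x)}=\phi_t(\alpha_s x)$, hence
\[
\frac{1}{|A_n|}\int_{A_n} f_x(s)\overline{f_x(s-t)}\,\dd s=\frac{1}{|A_n|}\int_{A_n}\phi_t(\alpha_s x)\,\dd s\,.
\]
As $\phi_t\in C(X)\subseteq L^1(X,m)$, the Birkhoff theorem along $\cA$ yields a full-measure set $X_t\subseteq X$ such that for all $x\in X_t$ the right-hand side converges to $\int_X\phi_t\,\dd m$. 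Substituting $y=\alpha_t z$ and using $\alpha$-invariance of $m$, this limit equals $\int_X f(\alpha_t z)\overline{f(z)}\,\dd m(z)=\langle T_t f,f\rangle=\widecheck{\sigma_f}(t)$ by the defining relation of the spectral measure $\sigma_f$.

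To handle all $t$ simultaneously I would fix a countable dense set $D\subseteq G$ (which exists by hypothesis) and put $X_0:=\bigcap_{t\in D}X_t$, still of full measure. Fix $x\in X_0$. A short argument shows $f_x\in\Cu(G)$: continuity of $(g,y)\mapsto f(\alpha_g y)$ on $G\times X$ together with compactness of $X$ yields, for each $\eps>0$, a neighbourhood $U$ of $0$ with $|f(\alpha_h y)-f(y)|<\eps$ for all $h\in U$ and $y\in X$, whence $|f_x(g+h)-f_x(g)|<\eps$ for all $g\in G$ and $h\in U$. Thus Lemma~\ref{lem4}, applied with both arguments equal to $f_x$, shows that the set of $t$ for which $\frac{1}{|A_n|}\int_{A_n} f_x(s)\overline{f_x(s-t)}\,\dd s$ converges is closed; as it contains the dense set $D$, it is all of $G$, so $\lb f_x,f_x\rb_{\cA}$ exists. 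The averages also form a uniformly equicontinuous family (the estimate in the proof of Lemma~\ref{lem4}), so their pointwise limit $\lb f_x,f_x\rb_{\cA}$ is continuous; since $t\mapsto\widecheck{\sigma_f}(t)=\langle T_t f,f\rangle$ is continuous (strong continuity of $T$, Theorem~\ref{thm-unitary-rep-functions}) and the two continuous functions agree on the dense set $D$, they agree on all of $G$. This yields $\lb f_x,f_x\rb_{\cA}(t)=\widecheck{\sigma_f}(t)$ for every $t\in G$ and every $x\in X_0$.

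The only genuine obstacle is this interchange of quantifiers: the ergodic theorem produces, for each fixed $t$, a null set depending on $t$, whereas the claim requires one null set valid for every $t$ at once. This is precisely where the countable dense subset of $G$ is used, together with the continuity/closedness input of Lemma~\ref{lem4}; the other two steps -- rewriting the integrand as a Birkhoff average and the invariance computation identifying the limit as $\widecheck{\sigma_f}(t)$ -- are routine.
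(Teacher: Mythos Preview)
Your proposal is correct and follows essentially the same route as the paper: apply Birkhoff's theorem at each $t$ in a countable dense subset $D\subseteq G$, intersect the resulting full-measure sets, and then use Lemma~\ref{lem4} together with continuity of both sides to pass from $D$ to all of $G$. Your version is in fact more careful than the paper's, explicitly verifying $f_x\in\Cu(G)$ (needed for Lemma~\ref{lem4}) and arguing continuity of the limit via equicontinuity; the only cosmetic slip is citing Theorem~\ref{thm-unitary-rep-functions} for the continuity of $t\mapsto\langle T_t f,f\rangle$ on $L^2(X,m)$, which instead follows from finiteness of $\sigma_f$ (or the standard strong continuity of the Koopman representation).
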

\begin{proof}
Let $D= \{ t_n : n \in \NN \}$ be a countable dense set in $G$. Then, there exists some set $X_n \subseteq X$ of full measure such that, for all $x \in X_n$ we have
\[
\lim_i \frac{1}{|A_i|} \int_{A_i} f_x (s) \overline{f_x (s-t_n)} \dd s = \langle f , T_{t_n} f \rangle = \widecheck{\sigma_f}(t_n) \,.
\]
Then $Y =\bigcap_n X_n$ has full measure in $X$ and for all $x\in Y$ and all $n$ we have
\[
\lim_i \frac{1}{|A_i|} \int_{A_i} f_x (s) \overline{f_x (s-t_n)} \dd s = \langle f , T_{t_n} f \rangle = \widecheck{\sigma_f}(t_n) \,.
\]
Then, $\lb f,f \rb_{\cA}$ exists by Lemma~\ref{lem4} and for all $n$ we have
\[
\lb f_x, f_x \rb_{\cA}(t_n) = \widecheck{\sigma_f}(t_n) \,.
\]
The claim follows now by continuity and denseness of $D$.
\end{proof}

Given this proposition our main result on orthogonality has the
following immediate consequence.

\begin{corollary}\label{lem1}Assume that $G$ is $\sigma$-compact and has a dense countable set.  Let $(X,\alpha,m)$ and $(X',\alpha',m')$ be ergodic
dynamical systems.  Let $\cA$ be a van Hove sequence along which the
Birkhoff ergodic theorem holds (for both systems). Let $f\in L^2 (X,m)$ and
$g\in L^2 (X',m')$ be given with $\sigma_{f}\perp
\sigma_{g}$. Then, for $m \times m'$ almost every
$(x,x')\in X\times X'$ we have
\[
\lb f_{x}, g_{x'}\rb_{\cA} = 0 \,.
\]\qed
\end{corollary}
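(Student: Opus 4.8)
The plan is to reduce the statement, via Fubini's theorem, to a pointwise assertion about a single pair $(x,x')$, and then to transfer that assertion to the level of translation bounded measures so that the main orthogonality result, Theorem~\ref{thm:main}, can be applied. First I would fix a countable dense set in $G$ (which exists by hypothesis). Applying the preceding proposition to the ergodic system $(X,\alpha,m)$ together with $f$, and separately to $(X',\alpha',m')$ together with $g$, yields a subset $X_0 \subseteq X$ of full $m$-measure and a subset $X_0' \subseteq X'$ of full $m'$-measure with $\lb f_x,f_x\rb_{\cA} = \widecheck{\sigma_f}$ for all $x \in X_0$ and $\lb g_{x'},g_{x'}\rb_{\cA} = \widecheck{\sigma_g}$ for all $x' \in X_0'$. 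By Fubini the product $X_0 \times X_0'$ has full $m\times m'$-measure, so it suffices to fix $(x,x') \in X_0\times X_0'$ and prove $\lb f_x,g_{x'}\rb_{\cA} = 0$.

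For such a fixed pair I would pass to the measures $\mu := f_x\,\theta_G$ and $\nu := g_{x'}\,\theta_G$, where $\theta_G$ is Haar measure on $G$; since $f_x$ and $g_{x'}$ are bounded, $\mu$ and $\nu$ are translation bounded. A direct computation with the van Hove property shows that for measures of this form the measure- and function-versions of the reflected Eberlein convolution are compatible: $\lb\mu,\nu\rb_{\cA}$ exists precisely when $\lb f_x,g_{x'}\rb_{\cA}$ does, and then $\lb\mu,\nu\rb_{\cA} = \lb f_x,g_{x'}\rb_{\cA}\,\theta_G$; in particular the autocorrelations exist with $\gamma_{\mu} = \widecheck{\sigma_f}\,\theta_G$ and $\gamma_{\nu} = \widecheck{\sigma_g}\,\theta_G$. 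Since $\widecheck{\sigma_f}$ is the inverse Fourier transform of the finite positive measure $\sigma_f$, a short calculation from the defining relation $\gamma\ast\varphi\ast\widetilde{\varphi}(0) = \int_{\widehat G}|\widehat\varphi|^2\,\dd\reallywidehat{\gamma}$ (compare Proposition~\ref{char-ft-gamma}) gives $\reallywidehat{\gamma_{\mu}} = \sigma_f$ and, likewise, $\reallywidehat{\gamma_{\nu}} = \sigma_g$.

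Now the hypothesis $\sigma_f \perp \sigma_g$ becomes exactly $\reallywidehat{\gamma_{\mu}} \perp \reallywidehat{\gamma_{\nu}}$, so Theorem~\ref{thm:main} applies to $\mu$ and $\nu$ and shows that $\lb\mu,\nu\rb_{\cA}$ exists and equals $0$; by the compatibility noted above this gives $\lb f_x,g_{x'}\rb_{\cA} = 0$, completing the argument for $(x,x')$ and hence for $m\times m'$-almost every pair.

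The main conceptual obstacle I anticipate is that $\lb f_x,g_{x'}\rb_{\cA}$ is not obviously even defined: the product system $(X\times X',\alpha\times\alpha',m\times m')$ need not be ergodic, so the Birkhoff theorem does not directly furnish the limit defining this reflected Eberlein convolution. The route through translation bounded measures is designed precisely to sidestep this, because the conclusion of Theorem~\ref{thm:main} already asserts existence of $\lb\mu,\nu\rb_{\cA}$. Beyond this, the steps to be checked are routine bookkeeping: the compatibility of the two reflected Eberlein convolutions for measures of the form (bounded function)$\,\cdot\,$(Haar measure), which is exactly where the van Hove property of $\cA$ enters (it forces $\frac{1}{|A_i|}\int_{A_i}f_x(s)\overline{g_{x'}(s-t)}\,\dd s - \frac{1}{|A_i|}(\mu|_{A_i}\ast\widetilde{\nu|_{A_i}})(t)\to 0$ for each $t$), and the Fourier-inversion identity $\reallywidehat{\widecheck{\sigma}\,\theta_G} = \sigma$ for a finite positive measure $\sigma$ on $\widehat G$. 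Finally, since the proposition invoked at the start was recorded for $f\in C(X)$, passing from continuous $f,g$ to arbitrary $f\in L^2(X,m)$, $g\in L^2(X',m')$ needs a small additional argument — e.g.\ via the ergodic decomposition of $X\times X'$, whose ergodic components are joinings of the two systems with the original marginals, on which $f\otimes 1$ and $1\otimes g$ carry the spectral measures $\sigma_f$ and $\sigma_g$ so that Lemma~\ref{lemma2}(e) applies — and this is the one point where the merely $L^2$ (rather than continuous) nature of $f$ and $g$ is delicate.
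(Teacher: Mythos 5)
Your argument is correct and is essentially the paper's own (the paper states the corollary as an immediate consequence): for $m$-a.e.\ $x$ and $m'$-a.e.\ $x'$ the preceding proposition gives autocorrelations $\widecheck{\sigma_f}$ and $\widecheck{\sigma_g}$, hence diffractions $\sigma_f \perp \sigma_g$, and Theorem~\ref{thm:main} — which, as you rightly stress, also supplies the \emph{existence} of the mixed convolution that Birkhoff alone cannot give — yields $\lb f_x, g_{x'}\rb_{\cA}=0$; your bookkeeping via $\mu=f_x\theta_G$, $\nu=g_{x'}\theta_G$, the van Hove compatibility of the function- and measure-level convolutions, and the identity $\reallywidehat{\widecheck{\sigma}\,\theta_G}=\sigma$ is exactly the intended reduction. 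The $C(X)$-versus-$L^2$ point you flag at the end is a looseness in the paper's own statement (the proposition it relies on is proved only for $f\in C(X)$, and the paper supplies no extra argument for merely $L^2$ observables), so your caveat is well placed, even though your sketched fix via joinings would still need the Birkhoff averages along $\cA$ for those components, which the hypotheses do not provide.
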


As an immediate consequence, we get the following.

\begin{corollary} In the situation of Lemma~\ref{lem1}, assume furthermore
that  $(X,\alpha,m)$ has pure
point spectrum (i.e. all spectral measures are pure point measures)
and $(X,\alpha,m')$ is weak mixing (i.e. the spectral
measures  of all $f\perp 1$ are continuous). Let $f\in C(X)$ and $g\in C(X')$ be given. Then, for
$m \times m'$ almost every $(x,x')$ the equality
\begin{align*}
\lb f_{x}, g_{x'}\rb_{\cA}  =\langle 1,g\rangle \langle f,1\rangle 1_{G}
  &= \left( \int_{X'} \overline{g} \dd m' \right) \left( \int_{X} f \dd m \right) 1_{G}
\end{align*}
holds.
\end{corollary}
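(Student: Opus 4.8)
The plan is to decompose $f$ and $g$ into their constant parts and their mean-zero parts, and then to combine sesquilinearity of the (function-valued) reflected Eberlein convolution with the orthogonality result Corollary~\ref{lem1} and the Birkhoff theorem along $\cA$.

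Concretely, put $c_f:=\langle f,1\rangle=\int_X f\,\dd m$ and $c_g:=\langle g,1\rangle=\int_{X'}g\,\dd m'$, and write $f=c_f\,1+f_0$ and $g=c_g\,1+g_0$ with $f_0:=f-c_f\,1\in C(X)$ and $g_0:=g-c_g\,1\in C(X')$; then $f_0\perp 1$ in $L^2(X,m)$ and $g_0\perp 1$ in $L^2(X',m')$. For $x\in X$ and $x'\in X'$ this yields, as functions on $G$,
\[
f_x=c_f\,1_G+(f_0)_x,\qquad g_{x'}=c_g\,1_G+(g_0)_{x'}.
\]
Expanding $\lb f_x,g_{x'}\rb_{\cA}$ by linearity in the first and conjugate-linearity in the second variable produces four terms, and it remains to evaluate each of them on a common full-measure set of $(x,x')$.

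The diagonal term is $\lb 1_G,1_G\rb_{\cA}=1_G$, since $M_{\cA}\bigl(1_G\,\overline{\tau_t 1_G}\bigr)=M_{\cA}(1_G)=1$ for every $t$. For the two mixed terms, $G$-invariance of the mean gives $\lb (f_0)_x,1_G\rb_{\cA}(t)=M_{\cA}\bigl((f_0)_x\bigr)$ and $\lb 1_G,(g_0)_{x'}\rb_{\cA}(t)=\overline{M_{\cA}\bigl((g_0)_{x'}\bigr)}$, and by the Birkhoff theorem along $\cA$ these equal $\int_X f_0\,\dd m=0$ for $m$-a.e.\ $x$ and $\overline{\int_{X'}g_0\,\dd m'}=0$ for $m'$-a.e.\ $x'$, respectively. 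For the genuinely off-diagonal term I would invoke the spectral hypotheses: pure point spectrum of $(X,\alpha,m)$ makes $\sigma_{f_0}$ a pure point measure, while weak mixing of $(X',\alpha',m')$ together with $g_0\perp 1$ makes $\sigma_{g_0}$ continuous; a pure point measure and a continuous measure are always mutually singular, so $\sigma_{f_0}\perp\sigma_{g_0}$, and Corollary~\ref{lem1} gives $\lb (f_0)_x,(g_0)_{x'}\rb_{\cA}=0$ for $m\times m'$-a.e.\ $(x,x')$.

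Finally I would assemble the pieces on the intersection (still of full measure) of the good sets above. Sesquilinearity then gives $\lb f_x,g_{x'}\rb_{\cA}=c_f\,\overline{c_g}\,\lb 1_G,1_G\rb_{\cA}=c_f\,\overline{c_g}\,1_G$, and since $c_f\,\overline{c_g}=\langle f,1\rangle\,\langle 1,g\rangle=\bigl(\int_X f\,\dd m\bigr)\bigl(\int_{X'}\overline{g}\,\dd m'\bigr)$, this is exactly the asserted identity. The only point to keep track of is that all four limits exist on a common full-measure set and that their sum really is $\lb f_x,g_{x'}\rb_{\cA}$: the diagonal term exists identically, the mixed terms exist a.e.\ by Birkhoff, and the off-diagonal term exists a.e.\ by Corollary~\ref{lem1} (which itself rests on Theorem~\ref{thm:main}), after which bilinearity delivers existence of $\lb f_x,g_{x'}\rb_{\cA}$ itself. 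There is no real obstacle beyond this bookkeeping—the substance lies entirely in Corollary~\ref{lem1}.
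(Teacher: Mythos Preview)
Your proof is correct and follows essentially the same approach as the paper. The only difference is economy: the paper decomposes only $g=\langle g,1\rangle\,1+h$ (since $\sigma_f$ is already pure point for the full $f$, one can apply Corollary~\ref{lem1} directly to the pair $(f,h)$ and handle $\lb f_x,1_G\rb_{\cA}$ via Birkhoff), whereas you decompose both $f$ and $g$, producing four terms instead of two---but the underlying idea is identical.
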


\begin{proof} We can decompose $g = \langle g,1  \rangle 1 + h$
with $h \perp 1$.  By $h\perp 1$ and, as $(X',\alpha',m')$
is weakly mixing, the spectral measure $\sigma_h$ is continuous.
Hence, $\gamma_{h_{x'}} = \sigma_h$ is continuous for
almost every $x'\in X'$. Now, the result follows directly from the preceding corollary.
\end{proof}

\section{Existence of the reflected Eberlein convolution for Besicovitch almost periodic measures}
In this section we discuss existence of the reflected Eberlein convolution for Besicovitch almost periodic measures. In particular, we determine the orthogonal complement of such measures.

Throughout this section we consider a $\sigma$-compact locally compact Abelian group $G$ and  we let a van Hove sequence $\cA$  be given.

A translation bounded measure $\mu$ is called \textit{Besicovitch
almost periodic} if $\mu\ast\varphi$ belongs to the
space of Besicovitch 2- almost periodic functions discussed above (in Example
\ref{ex-bes}) for all $\varphi \in\Cc (G)$.

Besicovitch almost periodic measures admit many reflected Eberlein
convolutions. In fact, we have the following characterization.

\begin{theorem} Let $G$ be a $\sigma$-compact LCAG. Then, the following statements are equivalent for the translation bounded $\nu$.

\begin{itemize}

\item[(i)] The reflected Eberlein convolution $\lb \mu,\nu \rb_{\cA}$ exists for all Besicovitch almost periodic measures $\mu \in \cM^\infty(G)$.
\item[(ii)] The Fourier--Bohr coefficients $a_\xi (\nu)$ exist for all $\xi \in \widehat{G}$.
\end{itemize}
Moreover, for $\mu,\nu$ satisfying conditions (i) and
(ii) the measure $\lb \mu,\nu\rb_{\cA}$ is the unique strongly
almost periodic measure satisfying the generalized consistency phase
property:
\[
a_{\xi}( \lb \mu,\nu\rb_{\cA})=a_\xi (\mu) \overline{a_\xi (\nu)} \qquad \mbox{ for all } \xi \in \widehat{G} \,.
\]

\end{theorem}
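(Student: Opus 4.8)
The plan is to treat the two implications and then the ``moreover'' statement, using throughout that a translation bounded $\mu$ is Besicovitch almost periodic precisely when every $\mu\ast\varphi$ lies in the Hilbert space $Bap^2_{\cA}(G)=\cH_{\widehat{G}}$ of Example~\ref{ex-bes}, hence is a mean--square limit of trigonometric polynomials $P=\sum_j c_j\xi_j$ (finite sums of characters). The direction (i)$\Longrightarrow$(ii) is quick: apply (i) to $\mu=\xi\theta_G$, which is translation bounded (its total variation is $\theta_G$) and Besicovitch almost periodic since $(\xi\theta_G)\ast\varphi=\widehat{\varphi}(\xi)\,\xi\in\cH_{\widehat{G}}$. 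Then $\lb\xi\theta_G,\nu\rb_{\cA}$ exists, so $\lb\nu,\xi\theta_G\rb_{\cA}$ exists by Lemma~\ref{thm:rebe props}(c), and hence $a_\xi(\nu)$ exists by Proposition~\ref{prop-FB}.

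For (ii)$\Longrightarrow$(i) I would first record that, when all $a_\xi(\nu)$ exist, for every character $\xi$ and every $\psi\in\Cc(G)$ the function $\lb\xi,\nu\ast\psi\rb_{\cA}$ exists and equals $\overline{a_\xi(\nu)}\,\overline{\widehat{\psi}(\xi)}\,\xi$; this comes from unwinding the density formula of Proposition~\ref{prop-FB} for $\lb\nu,\xi\theta_G\rb_{\cA}$, Lemma~\ref{thm:rebe props}(c), Lemma~\ref{lemma-relationsship} applied to $(\xi\theta_G)\ast\varphi_0$ with $\widehat{\varphi_0}(\xi)=1$, and $(\xi\theta_G)\ast\chi=\widehat{\chi}(\xi)\xi$. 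By linearity the same holds with any trigonometric polynomial in place of $\xi$. Now fix $\varphi,\psi\in\Cc(G)$ and a Besicovitch almost periodic $\mu$, choose trigonometric polynomials $P_n=\sum_j c^{(n)}_j\xi_j$ with $\varepsilon_n^2:=M_{\cA}(|\mu\ast\varphi-P_n|^2)\to 0$, and for $t\in G$ apply the Cauchy--Schwarz inequality on $(A_i,|A_i|^{-1}\dd s)$ together with $\|\nu\ast\psi\|_\infty<\infty$ to obtain
\[
\Bigl|\tfrac{1}{|A_i|}\!\int_{A_i}\!(\mu\ast\varphi-P_n)(s)\,\overline{(\nu\ast\psi)(s-t)}\,\dd s\Bigr|
\le\|\nu\ast\psi\|_\infty\Bigl(\tfrac{1}{|A_i|}\!\int_{A_i}\!|\mu\ast\varphi-P_n|^2\,\dd s\Bigr)^{1/2}.
\]
Since the $P_n$--term converges in $i$ to $\lb P_n,\nu\ast\psi\rb_{\cA}(t)$ and the right side tends to $\|\nu\ast\psi\|_\infty\varepsilon_n$, the cluster points of $\tfrac{1}{|A_i|}\int_{A_i}(\mu\ast\varphi)(s)\overline{(\nu\ast\psi)(s-t)}\,\dd s$ all lie within $\|\nu\ast\psi\|_\infty\varepsilon_n$ of one value for each $n$; letting $n\to\infty$ forces convergence, so $\lb\mu\ast\varphi,\nu\ast\psi\rb_{\cA}(t)$ exists for all $t$. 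As $\varphi,\psi$ are arbitrary, $\lb\mu,\nu\rb_{\cA}$ exists by Lemma~\ref{lemma-relationsship}.

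For the ``moreover'' part, let $\mu$ be Besicovitch almost periodic and $\nu$ satisfy (ii). Passing to the limit in $\lb P_n,\nu\ast\psi\rb_{\cA}=\sum_j c^{(n)}_j\overline{a_{\xi_j}(\nu)\widehat{\psi}(\xi_j)}\,\xi_j$ and using $\lb\mu,\nu\rb_{\cA}\ast\varphi\ast\tilde{\psi}=\lb\mu\ast\varphi,\nu\ast\psi\rb_{\cA}$ (Lemma~\ref{lemma-relationsship}) shows that $\lb\mu,\nu\rb_{\cA}\ast\varphi\ast\tilde{\psi}$ is a uniform limit of trigonometric polynomials, hence Bohr almost periodic, for all $\varphi,\psi$; convolving with a symmetric approximate identity then gives that $\lb\mu,\nu\rb_{\cA}\ast\varphi$ is Bohr almost periodic for all $\varphi$, i.e.\ $\lb\mu,\nu\rb_{\cA}$ is strongly almost periodic, and in particular possesses Fourier--Bohr coefficients. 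To identify them, compute the $\eta$-th Fourier coefficient of the Bohr almost periodic function $\lb\mu\ast\varphi,\nu\ast\psi\rb_{\cA}$ two ways: as the limit of the $\eta$-coefficient of $\lb P_n,\nu\ast\psi\rb_{\cA}$ it equals $M_{\cA}(\overline{\eta}\cdot(\mu\ast\varphi))\,\overline{a_\eta(\nu)\widehat{\psi}(\eta)}$, where $M_{\cA}(\overline{\eta}\cdot(\mu\ast\varphi))=a_\eta(\mu)\widehat{\varphi}(\eta)$ again by Proposition~\ref{prop-FB} and Lemma~\ref{lemma-relationsship}; on the other hand, for any translation bounded $\sigma$ with Fourier--Bohr coefficients one has $M_{\cA}(\overline{\eta}\cdot(\sigma\ast\varphi\ast\tilde{\psi}))=a_\eta(\sigma)\widehat{\varphi}(\eta)\overline{\widehat{\psi}(\eta)}$ by the same argument. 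Equating and picking $\varphi,\psi$ with $\widehat{\varphi}(\eta)\widehat{\psi}(\eta)\neq 0$ yields $a_\eta(\lb\mu,\nu\rb_{\cA})=a_\eta(\mu)\overline{a_\eta(\nu)}$. Uniqueness is the standard fact that a strongly almost periodic measure is determined by its Fourier--Bohr coefficients: if $\sigma_1,\sigma_2$ are strongly almost periodic with the same coefficients, the relation above gives $\sigma_1\ast\varphi$ and $\sigma_2\ast\varphi$ the same Fourier--Bohr series, hence $\sigma_1\ast\varphi=\sigma_2\ast\varphi$ for all $\varphi$, hence $\sigma_1=\sigma_2$.

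The step I expect to be the main obstacle is the limit passage in (ii)$\Longrightarrow$(i): one must move from trigonometric polynomials to a general element of $Bap^2_{\cA}(G)$ while $\nu\ast\psi$ is merely bounded (the autocorrelation of $\nu$ itself need not exist), so the interaction of the Cauchy--Schwarz bound with the $\limsup$ and $\liminf$ along the net has to be handled carefully; everything else is bookkeeping with the density formula of Proposition~\ref{prop-FB} and the convolution identity of Lemma~\ref{lemma-relationsship}.
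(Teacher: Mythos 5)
Your argument is correct, but it follows a genuinely different route from the paper's. The paper handles (i)$\Longrightarrow$(ii) exactly as you do (via $\xi\theta_G$ being Besicovitch almost periodic and Proposition~\ref{prop-FB}), and then simply cites \cite[Prop.~4.38]{LSS3} for (ii)$\Longrightarrow$(i) and for the identification of $\lb \mu,\nu \rb_{\cA}$ as the unique strongly almost periodic measure with the generalized consistent phase property. You instead reprove that content inside the paper's own toolkit: you compute $\lb \xi, \nu\ast\psi \rb_{\cA}$ from Proposition~\ref{prop-FB} and Lemma~\ref{lemma-relationsship}, approximate $\mu\ast\varphi$ by trigonometric polynomials in the Besicovitch seminorm, and pass to the limit with a Cauchy--Schwarz bound that is uniform in $t$; this makes transparent both why $\lb \mu,\nu \rb_{\cA}\ast\varphi\ast\tilde{\psi}$ is Bohr almost periodic (a uniform limit of trigonometric polynomials, then smoothing with an approximate identity) and where the formula $a_\xi(\lb \mu,\nu \rb_{\cA})=a_\xi(\mu)\overline{a_\xi(\nu)}$ comes from (continuity of Fourier--Bohr coefficients under uniform convergence), whereas the citation buys the paper only brevity. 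Two small glosses are worth repairing when writing this up: first, setting $\varepsilon_n^2=M_{\cA}(|\mu\ast\varphi-P_n|^2)$ tacitly presumes this mean exists; it is safer, and enough for your cluster-point argument, to use the Besicovitch seminorm defined with a $\limsup$ along the net. Second, the step $M_{\cA}(\overline{\eta}\cdot(\mu\ast\varphi))=a_\eta(\mu)\widehat{\varphi}(\eta)$ uses the existence of $a_\eta(\mu)$ for a Besicovitch almost periodic $\mu$, which is not a hypothesis of the theorem; it does follow from your already-proved implication (ii)$\Longrightarrow$(i) applied with $\eta\theta_G$ in place of $\nu$ (its Fourier--Bohr coefficients trivially exist), combined with Proposition~\ref{prop-FB}, and similarly $a_\eta(\lb \mu,\nu \rb_{\cA})$ exists because $\overline{\eta}\cdot(\lb \mu,\nu \rb_{\cA}\ast\varphi)$ is Bohr almost periodic and hence has a mean. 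With these points made explicit, your proof is complete and self-contained.
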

\begin{proof}
(i) $\Longrightarrow$ (ii) follows immediately from the fact that $\xi \theta_{G}$ is Besicovitch almost periodic.

(ii) $\Longrightarrow$ (i) and the last claim follows from \cite[Proposition~4.38]{LSS3}  (compare \cite[Proposition~3.26]{LSS3}).
\end{proof}

From the previous result and the fact that a strongly almost
periodic measure is zero if and only if all its Fourier--Bohr
coefficients are 0 (see for example \cite[Corollary~4.6.10]{MoSt})
we obtain the following result on orthogonality.

\begin{corollary}[Orthogonal complement of Besicovitch almost periodic measures]\label{cor-BAP} Let $\nu$ be a translation bounded measure. Then $\lb \mu,\nu\rb_{\cA} =0$ for all Besicovitch almost periodic $\mu$ if and only if all Fourier--Bohr coefficients $a_{\xi}^\cA(\nu)$ of $\nu$ exist and vanish.
\end{corollary}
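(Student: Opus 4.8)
The plan is to obtain both implications directly from the Theorem proved immediately above, which says that for a translation bounded $\nu$ the reflected Eberlein convolution $\lb \mu,\nu\rb_{\cA}$ exists for every Besicovitch almost periodic $\mu$ if and only if all Fourier--Bohr coefficients $a_\xi^\cA(\nu)$ exist, and that in this case $\lb \mu,\nu\rb_{\cA}$ is the unique strongly almost periodic measure obeying the generalised consistency phase property $a_\xi(\lb \mu,\nu\rb_{\cA}) = a_\xi(\mu)\overline{a_\xi(\nu)}$. Combined with the fact that a strongly almost periodic measure vanishes as soon as all its Fourier--Bohr coefficients vanish \cite[Cor.~4.6.10]{MoSt}, both directions fall out quickly.

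For the implication ``$\Leftarrow$'' I would argue as follows. Suppose all $a_\xi^\cA(\nu)$ exist and equal $0$. Then condition (ii) of the preceding theorem holds, so for every Besicovitch almost periodic $\mu$ the convolution $\lb \mu,\nu\rb_{\cA}$ exists, is strongly almost periodic, and satisfies $a_\xi(\lb \mu,\nu\rb_{\cA}) = a_\xi(\mu)\overline{a_\xi(\nu)} = 0$ for all $\xi\in\widehat G$. A strongly almost periodic measure all of whose Fourier--Bohr coefficients vanish is the zero measure, hence $\lb \mu,\nu\rb_{\cA} = 0$.

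For the converse ``$\Rightarrow$'' I would assume $\lb \mu,\nu\rb_{\cA} = 0$ for every Besicovitch almost periodic $\mu$. In particular $\lb \mu,\nu\rb_{\cA}$ exists for all such $\mu$, so the preceding theorem gives that every $a_\xi^\cA(\nu)$ exists. To see it vanishes, fix $\xi\in\widehat G$ and test against $\mu = \xi\theta_G$, which is Besicovitch almost periodic with $a_\eta^\cA(\xi\theta_G) = M_\cA(\xi\overline{\eta})$ equal to $1$ when $\eta=\xi$ and $0$ otherwise. The consistency phase property then yields $0 = a_\xi(\lb \xi\theta_G,\nu\rb_{\cA}) = a_\xi(\xi\theta_G)\,\overline{a_\xi^\cA(\nu)} = \overline{a_\xi^\cA(\nu)}$, so $a_\xi^\cA(\nu)=0$. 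Since essentially all of the analytic work is already contained in the preceding theorem, there is no real obstacle here; the only points needing a line of verification are that $\xi\theta_G$ is Besicovitch almost periodic with the stated Fourier--Bohr coefficients (immediate, as the mean of a character is $1$ or $0$) and that the uniqueness/consistency clause of that theorem is being applied in the correct form.
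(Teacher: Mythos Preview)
Your proof is correct and follows essentially the same approach as the paper: both directions are derived from the preceding theorem together with the fact that a strongly almost periodic measure vanishes precisely when all its Fourier--Bohr coefficients vanish \cite[Cor.~4.6.10]{MoSt}. The paper leaves the argument at a one-line remark, while you spell out the natural choice $\mu=\xi\theta_G$ for the ``$\Rightarrow$'' direction; this is exactly the intended unpacking.
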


\begin{appendix}
\section{Some background on translation bounded measures}

Let $V\subset G$ be an open subset of $G$ with compact closure. Let
$C>0$. We denote by $\cM_{C,V} (G)$ the set of those measures  $\mu$
with
\[
\|\mu \|_{V}:=\sup_{t\in G} |\mu|(t+V) \leq C \,.
\]
Let now  $W$  be another  open relative compact set. Then, we can
cover $\overline{W}$ by finitely many translates of $V$. Hence,
there exists a $D>0$ with
\[
\cM_{C,V}(G)\subset \cM_{D,W}(G)\,.
\]
In particular,
\[
\cM^\infty (G):=\bigcup_{C>0} \cM_{C,V} (G)
\]
is independent of $V$ open and relatively compact. The elements of
$\cM^\infty (G)$ are called translation bounded measures.

We recall the following statement of \cite{BL}.

\begin{proposition}[Compactness of $\cM_{C,V}(G)$] For any $C>0$ and $V\subset G$ open with compact
closure the set $\cM_{C,V} (G)$ is compact.

Moreover, if $G$ is second countable, the vague topology is metrisable on $\cM_{C,V} (G)$.\qed
\end{proposition}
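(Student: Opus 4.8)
The plan is to realise $\cM_{C,V}(G)$ as a closed subset of a compact product of disks, and afterwards to cut the index set down to a countable one under second countability. First, every $\mu \in \cM_{C,V}(G)$ is in particular a Radon measure, and if $\varphi \in \Cc(G)$ and $K := \supp \varphi$ is covered by $N_K$ translates of $V$ (possible since $V$ is open and nonempty), then $|\mu|(K) \le N_K C$ and hence $|\mu(\varphi)| \le N_K C \|\varphi\|_\infty$. Thus $\mu(\varphi)$ always lies in the compact disk $D_\varphi := \{ z \in \CC : |z| \le N_{\supp\varphi}\, C\, \|\varphi\|_\infty \}$, and the evaluation map $\Phi : \mu \mapsto (\mu(\varphi))_{\varphi \in \Cc(G)}$ sends $\cM_{C,V}(G)$ injectively into $X := \prod_{\varphi\in\Cc(G)} D_\varphi$. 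By construction the vague topology on $\cM_{C,V}(G)$ is exactly the topology of pointwise convergence on $\Cc(G)$, so $\Phi$ is a homeomorphism onto its image; since $X$ is compact by Tychonoff's theorem, it remains to prove that $\Phi(\cM_{C,V}(G))$ is closed.

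For closedness, let $(z_\varphi)_\varphi$ lie in the closure of $\Phi(\cM_{C,V}(G))$, realised as the pointwise limit of a net $\mu_i \in \cM_{C,V}(G)$, and put $\mu(\varphi) := z_\varphi$. Linearity of $\mu$ and the estimate $|\mu(\varphi)| \le N_{\supp\varphi}\, C\, \|\varphi\|_\infty$ both pass to the limit, so $\mu$ is a Radon measure and its total variation $|\mu|$ is defined. The one point that needs care is $\|\mu\|_V \le C$, i.e. $|\mu|(t+V) \le C$ for all $t$; for this I would use the variational identity
\[
|\mu|(t+V) = \sup\{ |\mu(\varphi)| : \varphi \in \Cc(G),\ \|\varphi\|_\infty \le 1,\ \supp\varphi \subseteq t+V \},
\]
whose ``$\le$'' direction is immediate from $|\mu(\varphi)| \le \int|\varphi|\,\dd|\mu|$, and whose ``$\ge$'' direction follows from inner regularity of $|\mu|$ together with Lusin's theorem and Urysohn's lemma, producing test functions $\varphi$ for which $\varphi h$ is close to $\mathbf 1$ on a large compact subset of $t+V$. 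Each such $\varphi$ then satisfies $|\mu(\varphi)| = \lim_i |\mu_i(\varphi)| \le \sup_i |\mu_i|(t+V) \le C$, so the supremum is $\le C$. Hence $\mu \in \cM_{C,V}(G)$, the image is closed, and $\cM_{C,V}(G)$ is compact.

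Finally, for metrisability when $G$ is second countable, I would replace $\Cc(G)$ by a countable subfamily. By the remarks in Section~\ref{section-setting}, $G$ is then metrisable and $\sigma$-compact; fix compacts $K_n$ with $K_n \subseteq \mathrm{int}(K_{n+1})$ and $\bigcup_n K_n = G$. For each $n$ the space of functions in $\Cc(G)$ supported in $K_{n+1}$, with $\|\cdot\|_\infty$, embeds isometrically into $C(K_{n+1})$, which is separable because $K_{n+1}$ is a compact metric space; picking a countable dense subset for each $n$ and taking the union gives a countable $\cF \subseteq \Cc(G)$ such that every $\varphi \in \Cc(G)$ is a uniform limit of elements of $\cF$ whose supports lie in a common compact set. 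Using the uniform Lipschitz bound $|\mu(\varphi) - \mu(\psi)| \le N_K\, C\, \|\varphi - \psi\|_\infty$ on $\cM_{C,V}(G)$, a standard $\eps/3$ argument shows that pointwise convergence on $\cF$ already forces vague convergence; therefore $\mu \mapsto (\mu(\varphi))_{\varphi\in\cF}$ is a continuous injection of the compact space $\cM_{C,V}(G)$ into the metrisable space $\prod_{\varphi\in\cF} D_\varphi$, hence, being a continuous injection from a compact to a Hausdorff space, a homeomorphism onto its image; so $\cM_{C,V}(G)$ is metrisable. I expect the main obstacle to be the second step: the incompleteness of $\Cc(G)$ rules out a direct Banach--Alaoglu argument (this is what forces the detour through the disk product $X$), and transferring the bound $\|\mu_i\|_V \le C$ to the vague limit genuinely needs the variational description of $|\mu|$ on the \emph{open} set $t+V$ rather than just on compacta.
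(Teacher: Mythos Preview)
Your proof is correct. The paper, however, does not supply its own proof here: the proposition is recalled from \cite{BL} and simply closed with a \qed. That said, the heart of your closedness argument---passing to the pointwise limit $\mu$ and then recovering the bound $\|\mu\|_V \le C$ via the variational description of $|\mu|$ on the open set $t+V$---is exactly what the paper carries out in the implication (ii)$\Longrightarrow$(i) of the very next lemma (``Characterization of vague convergence''), including the same identity $|\mu|(t+V) = \sup\{|\mu(\psi)| : \psi \in \Cc(G),\ |\psi| \le 1_{t+V}\}$. Your Tychonoff embedding into $\prod_\varphi D_\varphi$ is the natural substitute for Banach--Alaoglu given that $\Cc(G)$ is not a Banach space, and your metrisability argument via a countable separating family $\cF \subseteq \Cc(G)$ together with the compact-to-Hausdorff injection trick is standard; the paper offers no alternative for either step.
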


We next gather  the following (well-known) characterizations of
vague convergence of measures.

\begin{lemma}[Characterization of vague convergence]
For a  net $(\mu_i)_{i \in I} \in\cM_{C,V}(G)$ the following statements are
equivalent:
\begin{itemize}
\item[(i)] $\mu_i$ converges vaguely to some $\mu \in \cM_{C,V}(G)$.
\item[(ii)]  $\mu_i(\varphi)$ converges for any $\varphi \in\Cc (G)$.
\item[(iii)]   $\mu_i\ast \varphi (0)$ converges for any $\varphi \in\Cc(G)$.
\item[(iv)]  $\mu_i\ast\varphi \ast \widetilde{\psi}(0)$ converges for any $\varphi,\psi\in\Cc (G)$.
\end{itemize}
\end{lemma}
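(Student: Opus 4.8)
The plan is to prove the equivalences in the cycle-and-spokes pattern (i)$\Leftrightarrow$(ii), (ii)$\Leftrightarrow$(iii), (ii)$\Rightarrow$(iv), (iv)$\Rightarrow$(i); the first three steps are routine and only (iv)$\Rightarrow$(i) carries real content. For (i)$\Rightarrow$(ii) there is nothing to do, as this is the definition of vague convergence. For (ii)$\Rightarrow$(i) I would set $\mu(\varphi):=\lim_i \mu_i(\varphi)$, which is visibly a linear functional on $\Cc(G)$. To see that it is a Radon measure, fix a compact $K$, cover $K$ by finitely many, say $N$, translates of $V$, and note that any $\varphi\in\Cc(G)$ with $\supp(\varphi)\subseteq K$ satisfies $|\mu_i(\varphi)|\le \|\varphi\|_\infty |\mu_i|(K)\le NC\|\varphi\|_\infty$; this bound passes to the limit. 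Moreover $\mu\in\cM_{C,V}(G)$: for $t\in G$ and $\varphi\in\Cc(G)$ with $\supp(\varphi)\subseteq t+V$ and $|\varphi|\le 1$ we get $|\mu(\varphi)|=\lim_i|\mu_i(\varphi)|\le\sup_i|\mu_i|(t+V)\le C$ (here openness of $V$ is used so that the closed set $\supp(\varphi)$ can sit inside $t+V$), and since $t+V$ is open, $|\mu|(t+V)=\sup\{|\mu(\varphi)|:\varphi\in\Cc(G),\,|\varphi|\le 1,\,\supp(\varphi)\subseteq t+V\}\le C$. Finally $\mu_i\to\mu$ vaguely by construction.

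For (ii)$\Leftrightarrow$(iii) I would only observe that $\mu_i\ast\varphi(0)=\int\varphi(-s)\,\dd\mu_i(s)=\mu_i(\varphi^\dagger)$, and $\varphi\mapsto\varphi^\dagger$ is a bijection of $\Cc(G)$ onto itself, so the two statements are literally the same. For (ii)$\Rightarrow$(iv): given $\varphi,\psi\in\Cc(G)$ the function $\varphi\ast\widetilde{\psi}$ again lies in $\Cc(G)$, and by associativity of the convolution $\mu_i\ast\varphi\ast\widetilde{\psi}(0)=\mu_i\ast(\varphi\ast\widetilde{\psi})(0)=\mu_i\bigl((\varphi\ast\widetilde{\psi})^\dagger\bigr)$, which converges by (ii).

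The implication (iv)$\Rightarrow$(i) is the heart of the matter, and I would argue it by compactness. Since $\cM_{C,V}(G)$ is vaguely compact, the net $(\mu_i)$ has a vague cluster point in $\cM_{C,V}(G)$, and it converges as soon as the cluster point is unique. So let $\mu,\mu'$ be two cluster points; passing to suitable subnets one gets $\mu\ast\varphi\ast\widetilde{\psi}(0)=\mu'\ast\varphi\ast\widetilde{\psi}(0)$, i.e. $\mu\bigl((\varphi\ast\widetilde{\psi})^\dagger\bigr)=\mu'\bigl((\varphi\ast\widetilde{\psi})^\dagger\bigr)$, for all $\varphi,\psi\in\Cc(G)$. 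It remains to see that these smeared evaluations determine the measure. Fixing $\varphi$ and letting $\psi$ run so that $\widetilde{\psi}$ ranges over an approximate identity $(\rho_j)_j$ supported in a fixed compact neighbourhood $W$ of $0$, we have $\varphi\ast\rho_j\to\varphi$ uniformly with all supports inside the fixed compact set $\supp(\varphi)+\overline{W}$; combined with the uniform estimate $|\lambda(\chi)|\le N C\|\chi\|_\infty$ valid for every $\lambda\in\cM_{C,V}(G)$ and every $\chi$ supported in that fixed compact set, this gives $\mu(\varphi^\dagger)=\lim_j\mu\bigl((\varphi\ast\rho_j)^\dagger\bigr)=\lim_j\mu'\bigl((\varphi\ast\rho_j)^\dagger\bigr)=\mu'(\varphi^\dagger)$ for all $\varphi\in\Cc(G)$, hence $\mu=\mu'$. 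Thus all cluster points coincide and $(\mu_i)$ converges vaguely.

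The main obstacle is exactly this last step: showing that the doubly smeared functionals $\lambda\mapsto\lambda\ast\varphi\ast\widetilde{\psi}(0)$ already separate the points of $\cM_{C,V}(G)$. Its only genuine ingredient is the approximate-identity approximation of $\varphi$ by $\varphi\ast\widetilde{\psi}$, performed uniformly subject to the compact-support constraint so that the uniform translation bound applies. (An essentially equivalent route avoiding compactness: deduce directly from (iv) and the same estimate, via a three-$\varepsilon$ argument, that $(\mu_i(\varphi^\dagger))_i$ is a Cauchy net in $\CC$ for each $\varphi$, and then invoke the already-proven (ii)$\Rightarrow$(i).)
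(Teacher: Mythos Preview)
Your proof is correct and logically complete; the organization differs slightly from the paper's. The paper runs a single cycle (i)$\Rightarrow$(iv)$\Rightarrow$(iii)$\Rightarrow$(ii)$\Rightarrow$(i), whereas you do (i)$\Leftrightarrow$(ii)$\Leftrightarrow$(iii), (ii)$\Rightarrow$(iv), and then (iv)$\Rightarrow$(i). The only substantive divergence is in closing the gap from (iv): the paper proves (iv)$\Rightarrow$(iii) \emph{directly} by a three-$\varepsilon$ estimate (choose $\psi$ so that $\varphi\ast\widetilde\psi$ approximates $\varphi$ uniformly with support in a fixed relatively compact $W$, use the uniform bound $\|\mu_i\|_W\le D$, and conclude that $(\mu_i\ast\varphi(0))_i$ is Cauchy), while you instead invoke the vague compactness of $\cM_{C,V}(G)$ and show uniqueness of cluster points via the same approximate-identity estimate. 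Both arguments rest on the identical analytic core---uniform approximation of $\varphi$ by $\varphi\ast\widetilde\psi$ under a fixed compact-support constraint, combined with the uniform translation bound---so neither is deeper than the other. Your compactness route is perhaps conceptually tidier (it separates ``some limit exists'' from ``the limit is determined''), while the paper's route is marginally more self-contained since it does not appeal to the compactness proposition. Your parenthetical ``equivalent route'' at the end is in fact exactly the paper's argument for (iv)$\Rightarrow$(iii).
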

\begin{proof}
(i)$\Longrightarrow$(iv): This is clear as
\[
\mu_i \ast \varphi \ast \widetilde{\psi}(0) = \mu_i (\varphi^\dagger \ast
\overline{\psi})
\]
 and $\varphi^\dagger \ast \overline{\psi}\in\Cc (G)$.

(iv)$\Longrightarrow$ (iii):

Let $\varepsilon>0$ be given. Let $W$
be an arbitrary open relatively compact set containing the support
of $\varphi$. Chose $D>0$ with $\cM_{C,V}\subset \cM_{D,W}$. Chose
$\psi \in\Cc (G)$ such that $\varphi \ast \widetilde{\psi}$ is
supported in $W$ and
\[
\|\varphi - \varphi \ast
\widetilde{\psi}\|_\infty \leq \varepsilon \,.
\]
Then
\[
|\mu_i \ast \varphi \ast \widetilde{\psi} - \mu_i \ast
\varphi\|_\infty \leq 2 \varepsilon D \,.
\]
This easily gives the desired implication.

(iii)$\Longrightarrow$(ii): For any measure $\mu$ and $\varphi
\in\Cc (G)$ we have $\mu (\varphi^\dagger) = \mu\ast \varphi
(0)$. This easily gives the desired implication.

(ii)$\Longrightarrow$(i): We can define $\mu : \Cc(G)\longrightarrow \CC, \mu (\varphi) =\lim_i \mu_i (\varphi)$.
Then, $\mu$ is linear as it is a pointwise limit of linear functionals.

Next, fix a compact set $K \subseteq G$ and let $U$ be a pre-compact open set such that $K \subseteq U$. Let $D>0$ be so that
\[
\cM_{V,C} \subseteq \cM_{U,D} \,.
\]
Now, for all $\varphi \in \Cc(G)$ with $\sup(\varphi) \subseteq K$ we have
\begin{align*}
  |\mu(\varphi)| &= \left| \lim_i \mu_i(\varphi)\right| \leq \sup_{i}\left| \mu_i(\varphi)\right| \leq \sup_{i}\left| \mu_i \right| (|\varphi|) \\
  &\leq \sup_{i}\left| \mu_i \right| (\| \varphi\|_\infty 1_{K})\leq \| \varphi\|_\infty \sup_{i}\left| \mu_i \right| (U) \leq D \| \varphi\|_\infty \,.
\end{align*}
This shows that $\mu$ is a measure. By its definition, $\mu_i$ converges vaguely to $\mu$.

Finally, since $U$ is open, for all $t \in G$ we have by the inner regularity of $|\mu|$:
\begin{align*}
  |\mu|(t+V)| &= \sup\{ |\mu|(\varphi): \varphi \in \Cc(G) , \varphi \leq 1_{t+V} \} = \sup\{ |\mu(\psi)|: \psi \in \Cc(G) , |\psi| \leq 1_{t+V} \} \\
  &= \sup\{ |\lim_i \mu_i(\psi)|: \psi \in \Cc(G) , |\psi| \leq 1_{t+V} \} \\
  &\leq  \sup\{ |\mu_i(\psi)| : \psi \in \Cc(G) , |\psi| \leq 1_{t+V}, i \in I \} \\
  &\leq  \sup\{ |\mu_i|(|\psi|) : \psi \in \Cc(G) , |\psi| \leq 1_{t+V}, i \in I \} \\
   &\leq  \sup\{ |\mu_i|(1_{t+V}) : i \in I \} \leq \sup \{ \| \mu_i \|_{V} : i \in I \}\leq C  \,.
\end{align*}
Taking the supremum over all $t \in G$ we get $\mu \in \cM_{C,V}(G)$.
\end{proof}

We now turn to a universal bound (compare \cite[Section~9]{LR} or \cite{Martin}).

\begin{proposition}[A universal bound] Let $V\subset  G$ be a nonempty, open relatively compact set.
 Then, for all $\nu \in \cM^\infty(G)$ and every relatively compact $B\subset G$ we have
\[
|\nu| (B) \leq \frac{|B - V|}{|V|} \, \| \nu \|_{V} \,.
\]
\end{proposition}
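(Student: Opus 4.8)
The plan is to prove the bound by an averaging argument (Tonelli's theorem), integrating the quantity $|\nu|\bigl(B\cap(t+V)\bigr)$ over all translation parameters $t\in G$. First I would reduce to the case that $B$ is compact: by inner regularity of the positive Radon measure $|\nu|$ we have $|\nu|(B)=\sup\{|\nu|(K):K\subseteq B\text{ compact}\}$, and $K\subseteq B$ gives $K-V\subseteq B-V$, hence $|K-V|\le|B-V|$; so it suffices to prove the inequality for compact $B$. Observe also that $B-V=\bigcup_{b\in B}(b-V)$ is open, being a union of translates of the open set $-V$, and relatively compact, so $|B-V|<\infty$; and $-V$ has the same Haar measure as $V$ by translation- and inversion-invariance of Haar measure on an LCAG, so $|x-V|=|{-V}|=|V|$ for every $x\in G$.

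Next I would consider
\[
I:=\int_{G}|\nu|\bigl(B\cap(t+V)\bigr)\dd t .
\]
On the one hand, $B\cap(t+V)\neq\emptyset$ forces $t\in B-V$, so the integrand vanishes off $B-V$, and for every such $t$ we have $|\nu|(B\cap(t+V))\le|\nu|(t+V)\le\|\nu\|_V$; hence
\[
I\le|B-V|\,\|\nu\|_V .
\]
On the other hand, writing $|\nu|(B\cap(t+V))=\int_{B}\mathbf{1}_{V}(x-t)\dd|\nu|(x)$ and swapping the order of integration,
\[
I=\int_{B}\Bigl(\int_{G}\mathbf{1}_{V}(x-t)\dd t\Bigr)\dd|\nu|(x)=\int_{B}|x-V|\dd|\nu|(x)=|V|\,|\nu|(B),
\]
using $|x-V|=|V|$. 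Comparing the two expressions for $I$ gives $|V|\,|\nu|(B)\le|B-V|\,\|\nu\|_V$, and dividing by $|V|>0$ yields the claim.

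The only step requiring genuine care is the interchange of integrals, since Haar measure on a general LCAG need not be $\sigma$-finite. This is precisely why I would first pass to compact $B$: then $|\nu|$ restricted to $B$ and Haar measure restricted to the relatively compact open set $B-V$ are both \emph{finite} measures, the integrand $(t,x)\mapsto\mathbf{1}_V(x-t)$ is Borel on $G\times G$, and Tonelli's theorem applies without any $\sigma$-finiteness issue. Everything else is routine bookkeeping about difference sets and the invariance properties of Haar measure.
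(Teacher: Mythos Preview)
Your proof is correct and is essentially the same averaging/Fubini argument as in the paper: the paper phrases it via the pointwise inequality $1_B\le \frac{1}{|V|}\,1_{B-V}\ast 1_V$ and then integrates against $|\nu|$, while you compute $I=\int_G|\nu|(B\cap(t+V))\,\dd t$ in two ways, but after one Fubini swap both arguments coincide line for line. Your extra step of reducing to compact $B$ to secure Tonelli is a genuine improvement in rigor over the paper, which simply invokes ``Fubini'' without comment.
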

\begin{proof} A direct calculation shows
$1_{B} \leq \frac{1}{|V|} 1_{B - V} \ast 1_V.$ This
gives
\begin{align*}
|\nu| (B) &\leq     \frac{1}{|V|} \int_G \int_G 1_V (t -s) \,
1_{B - V} (s) \,\dd s\,\dd |\nu|(t)\\
&\stackrel{\mbox{Fubini}}{=\joinrel=\joinrel=\joinrel=\joinrel=\joinrel=\joinrel=}  \frac{1}{|V|}\int_G 1_{B-V} (s) \left(\int_G
1_V (t -s)\dd |\nu| (t)\right) \dd s \\
&\leq {\| \nu \|_{V}} \frac{1}{|V|}\int_G 1_{B-V} (s) \dd s = \frac{|B - V|}{|V|} \| \nu \|_{V} \,.
\end{align*}
This finishes the proof.
\end{proof}

As a consequence, we get:
\begin{proposition}\label{prop3} Let $\nu \in \cM^\infty (G)$ be given.
Let $(A_i)$ be a van Hove net. Then,
\[
\limsup_i \frac{|\nu|(A_i)}{|A_i|} \leq \frac{ {\| \nu \|_{V}} }{|V|}< \infty \,.
\]
\end{proposition}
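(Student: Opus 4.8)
The plan is to combine the universal bound from the preceding proposition with the defining property of a van Hove net. First I would note that, by definition of a van Hove net, each $A_i$ is open with compact closure, hence relatively compact, so the universal bound applies with $B = A_i$ and gives
\[
\frac{|\nu|(A_i)}{|A_i|} \leq \frac{\|\nu\|_V}{|V|}\cdot\frac{|A_i - V|}{|A_i|}.
\]
Since $V$ is nonempty and open we have $|V| > 0$, and since $\nu \in \cM^\infty(G)$ we have $\|\nu\|_V < \infty$; thus the asserted finiteness will follow once the quotient $|A_i - V|/|A_i|$ is controlled.

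The key step is to show that $\limsup_i |A_i - V|/|A_i| \leq 1$, and for this I would compare $A_i - V$ with $A_i$ by means of a $K$-boundary. Put $K := \overline{-V}$; this set is compact because $V$ is relatively compact and $x\mapsto -x$ is a homeomorphism of $G$. Then $A_i - V = A_i + (-V) \subseteq A_i + K \subseteq \overline{A_i + K}$, and every point of $\overline{A_i + K}$ either lies in $A_i$ or lies in $\overline{A_i+K}\setminus A_i$, which by the very definition of the $K$-boundary is contained in $\partial^K A_i$. Hence $A_i - V \subseteq A_i \cup \partial^K A_i$ and therefore $|A_i - V| \leq |A_i| + |\partial^K A_i|$.

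Substituting this bound into the displayed inequality yields
\[
\frac{|\nu|(A_i)}{|A_i|} \leq \frac{\|\nu\|_V}{|V|}\left(1 + \frac{|\partial^K A_i|}{|A_i|}\right),
\]
and since $\cA$ is a van Hove net we have $|\partial^K A_i|/|A_i| \to 0$. Taking $\limsup_i$ on both sides gives $\limsup_i |\nu|(A_i)/|A_i| \leq \|\nu\|_V/|V| < \infty$, which is the claim.

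I do not expect a serious obstacle here; the only point demanding a little care is the bookkeeping with signs and closures in the inclusion $A_i - V \subseteq A_i \cup \partial^K A_i$, which is precisely where the choice $K = \overline{-V}$ (as opposed to $\overline{V}$) is used.
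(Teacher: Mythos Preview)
Your proof is correct and follows essentially the same approach as the paper: apply the universal bound $|\nu|(A_i)\le \frac{\|\nu\|_V}{|V|}|A_i-V|$, then control $|A_i-V|$ by $|A_i|+|\partial^K A_i|$ for a suitable compact $K$ and invoke the van Hove property. Your choice $K=\overline{-V}$ is in fact the cleaner one for the inclusion $A_i-V\subseteq A_i\cup\partial^{K}A_i$; the paper writes $\partial^{\overline{V}}A_i$ while using $\overline{A_i-\overline{V}}$, which amounts to the same thing since the van Hove condition holds for every compact set.
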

\begin{proof} From the previous proposition we infer
\[
|\nu| (A_i) \leq \frac{\| \nu \|_{V} }{|V|} |A_i - V| \,.
\]
Now, we have
\[
(A_i -V) \subset  \overline{(A_i - \overline{V})}\setminus A_i
\cup A_i \subset \partial^{\overline{V}}  A_i \cup A_i \,.
\]
This immediately gives the desired statement.
\end{proof}

\begin{corollary}\label{cor5} Let $\mu,\nu$ be translation bounded measures, $(A_i)$ a van Hove
net and $V=-V \subseteq G$ be a fixed open, relatively compact set. Then, there exists an index $i_0$ and some $\kappa$ such that, for all
$\mu, \nu \in \cM^\infty(G)$ and all $i > i_0$ we have
\[
\| \frac{1}{|A_i|}\mu|_{A_i} \ast \widetilde{\nu|_{A_i}} \|_{V} \leq \kappa \| \mu \|_{V} \| \nu \|_V
\]
In particular,
\[
\{ \frac{1}{|A_i|}\mu|_{A_i} \ast \widetilde{\nu|_{A_i}} : \mu, \nu \in \cM_{C,V}, i > i_0 \} \subseteq \cM_{\kappa C,V} \,.
\]
\end{corollary}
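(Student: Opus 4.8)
The plan is to reduce the estimate to a single pointwise bound on the total variation of the finite measure $\mu|_{A_i}\ast\widetilde{\nu|_{A_i}}$ and then to invoke the van Hove property of $\cA$ exactly once, for a choice of $i_0$ that sees only the sets $A_i$ and not the measures. First I would record the elementary facts on total variations that carry the argument: since $\overline{A_i}$ is compact and $\mu,\nu$ are translation bounded, $\mu|_{A_i}$ and $\nu|_{A_i}$ are finite, so their reflected convolution is a well-defined finite measure; moreover $|\mu|_{A_i}| = |\mu||_{A_i}$, the total variation of $\widetilde{\nu|_{A_i}}$ is the reflection $(|\nu||_{A_i})^{\dagger}$ of $|\nu||_{A_i}$, and $|\sigma_1\ast\sigma_2|\le|\sigma_1|\ast|\sigma_2|$ for finite measures $\sigma_1,\sigma_2$. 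These give, for every $t\in G$,
\[
\bigl|\mu|_{A_i}\ast\widetilde{\nu|_{A_i}}\bigr|(t+V)\ \le\ \bigl(|\mu||_{A_i}\ast(|\nu||_{A_i})^{\dagger}\bigr)(t+V)\ =\ \int_{A_i}\!\int_{A_i}1_{t+V}(s-u)\,\dd|\nu|(u)\,\dd|\mu|(s).
\]

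Next I would carry out the inner integration. For fixed $s$ one has $\{u: s-u\in t+V\} = s-t-V = s-t+V$ (here $V=-V$ is used), so
\[
\int_{A_i}1_{t+V}(s-u)\,\dd|\nu|(u)\ =\ |\nu|\bigl(A_i\cap(s-t+V)\bigr)\ \le\ |\nu|(s-t+V)\ \le\ \|\nu\|_V.
\]
Integrating this in $s$ over $A_i$ and dividing by $|A_i|$ yields the key intermediate bound
\[
\Bigl\|\tfrac{1}{|A_i|}\,\mu|_{A_i}\ast\widetilde{\nu|_{A_i}}\Bigr\|_V\ \le\ \frac{|\mu|(A_i)}{|A_i|}\,\|\nu\|_V,
\]
valid for all $i$ and all translation bounded $\mu,\nu$.

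It remains to bound $|\mu|(A_i)/|A_i|$ uniformly in $\mu$ for all large $i$, and for this I would reuse the computation in the proof of Proposition~\ref{prop3}: the universal bound gives $|\mu|(A_i)\le\frac{\|\mu\|_V}{|V|}\,|A_i-V|$ and $A_i-V\subseteq\partial^{\overline V}A_i\cup A_i$, so that
\[
\frac{|\mu|(A_i)}{|A_i|}\ \le\ \frac{\|\mu\|_V}{|V|}\Bigl(1+\frac{|\partial^{\overline V}A_i|}{|A_i|}\Bigr).
\]
The point is that $|\partial^{\overline V}A_i|/|A_i|$ depends only on $\cA$ and $V$, and tends to $0$; hence there is an index $i_0$, depending only on $\cA$ and $V$, with $|\partial^{\overline V}A_i|/|A_i|\le1$ for all $i>i_0$. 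Combining the last two displays gives the claim with $\kappa = 2/|V|$, and the ``in particular'' statement follows by specialising to $\|\mu\|_V\le C$ and $\|\nu\|_V\le C$.

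I do not expect a genuine obstacle here; the one thing requiring care is the \emph{uniformity} of $i_0$, which must come from the van Hove property of the fixed net $\cA$ --- a statement about the family $(A_i)$ alone --- rather than from any limiting procedure that might depend on $\mu$ or $\nu$. A secondary bookkeeping point is keeping the total-variation inequalities and the reflection operator consistent when passing from $\mu|_{A_i}\ast\widetilde{\nu|_{A_i}}$ to $|\mu||_{A_i}\ast(|\nu||_{A_i})^{\dagger}$.
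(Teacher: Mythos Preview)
Your proof is correct and follows essentially the same route as the paper's: both arguments first bound $\|\tfrac{1}{|A_i|}\mu|_{A_i}\ast\widetilde{\nu|_{A_i}}\|_V$ by $\tfrac{|\mu|(A_i)}{|A_i|}\,\|\nu\|_V$ and then use the van Hove property (via Proposition~\ref{prop3}) to control the first factor by $\tfrac{2}{|V|}\|\mu\|_V$ for $i>i_0$, with $i_0$ depending only on $\cA$ and $V$. The only difference is cosmetic: the paper obtains the convolution estimate by invoking \cite[Lemma~6.1]{NS20} (together with $\|\widetilde{\nu|_{A_i}}\|_V\le\|\nu\|_V$ from $V=-V$), whereas you unpack that lemma and compute the double integral directly, which makes your argument self-contained.
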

\begin{proof} Note first that since $V=-V$ we have
\[
\| \widetilde{\nu|_{A_i}} \|_{V} \leq \| \nu \|_{V} \,.
\]
Moreover, by Proposition~\ref{prop3}, there exists some $i_0$ such that, for all $i >i_0$ we have
\[
\left|\frac{1}{|A_i|} \mu_{A_i}\right|(G)= \frac{1}{|A_i|}|\mu|(A_i) \leq \frac{2C}{|V|} \,.
\]
The claim follows now immediately from \cite[Lemma~6.1]{NS20}.
\end{proof}

\section{Universal van Hove nets}
In this section we prove the existence of "universal" van Hove nets, along which all reflected Eberlein convolutions of functions and measures exist, as well as all Fourier--Bohr coefficients.

To make the proofs easier to follow we do them in 3 steps.

\begin{lemma}[Universal van Hove net for Fourier--Bohr coefficients] Let $G$ be a LCAG and $\{ A_i \}_{i \in I}$ a van Hove net. Then, there exists a subnet $ \{ B_j \}_{j \in J}$ of $A_i$ such that, for all $f \in L^\infty(G)$ and all $\chi \in \hat{G}$ the Fourier--Bohr coefficient $a_{\chi}^{\cB}(f)$ exists.
\end{lemma}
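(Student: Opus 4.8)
The plan is to pass to a subnet along which the defining net of \emph{every} Fourier--Bohr coefficient converges, simultaneously for all $f \in L^\infty(G)$ and all $\chi \in \widehat{G}$; this is a soft compactness argument and nothing more.

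First I would record the elementary uniform bound. Since $|\chi(t)| = 1$ for every $\chi \in \widehat{G}$ and $t \in G$, and since $\frac{1}{|A_i|}\int_{A_i} 1 \, \dd t = 1$, we have
\[
\left| \frac{1}{|A_i|} \int_{A_i} \overline{\chi(t)} f(t) \, \dd t \right| \leq \frac{1}{|A_i|} \int_{A_i} |f(t)| \, \dd t \leq \|f\|_\infty
\]
for every $i \in I$, every $f \in L^\infty(G)$ and every $\chi \in \widehat{G}$. Writing $c_i(f,\chi) := \frac{1}{|A_i|}\int_{A_i} \overline{\chi(t)} f(t)\,\dd t$, this says $c_i(f,\chi) \in \overline{D}_{\|f\|_\infty} := \{ z \in \CC : |z| \leq \|f\|_\infty \}$ for all $i$.

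Next I would form the product space $X := \prod_{(f,\chi) \in L^\infty(G) \times \widehat{G}} \overline{D}_{\|f\|_\infty}$, which is compact by Tychonoff's theorem, and consider the net $(x_i)_{i \in I}$ in $X$ given by $x_i := (c_i(f,\chi))_{(f,\chi)}$. By compactness it has a convergent subnet $(x_{\beta(j)})_{j \in J}$; set $B_j := A_{\beta(j)}$. Since convergence in the product topology is coordinatewise, for each pair $(f,\chi)$ the limit $\lim_j c_{\beta(j)}(f,\chi)$ exists, which is exactly existence of the Fourier--Bohr coefficient $a_\chi^{\cB}(f)$ along $\cB = \{B_j\}_{j\in J}$. (One may equally well avoid Tychonoff and pass instead to a \emph{universal} subnet $\{B_j\}$ of $\{A_i\}$; then each net $j \mapsto c_j(f,\chi)$ is a universal net in the compact disc $\overline{D}_{\|f\|_\infty}$ and hence converges.)

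Finally I would verify that $\{B_j\}_{j \in J}$ is again a van Hove net: the sets $B_j$ are open with compact closure because the $A_i$ are, and for any compact $K \subseteq G$ the net $i \mapsto |\partial^K A_i|/|A_i|$ tends to $0$ along $I$, so it also tends to $0$ along the subnet $J$. The only points needing (routine) care are the bookkeeping of the monotone cofinal map $\beta$ and checking the van Hove property survives passage to a subnet; the size of the index set $L^\infty(G) \times \widehat{G}$ is harmless since it is a set and Tychonoff applies to arbitrary products. I do not expect a genuine obstacle here.
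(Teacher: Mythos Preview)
Your proof is correct and follows essentially the same Tychonoff argument as the paper: the paper first normalizes to $\|f\|_\infty \le 1$ by linearity and then passes to a convergent subnet in $D^X$ with $D$ the closed unit disc, whereas you work directly with the product of discs $\overline{D}_{\|f\|_\infty}$, but this is a cosmetic difference. Your explicit check that a subnet of a van Hove net is again van Hove, and your remark on universal subnets, are pleasant additions the paper omits.
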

\begin{proof}  By linearity of the Fourier--Bohr coefficient, it suffices to consider $f\in L^\infty (G)$ with $\|f\|_\infty \leq 1$. Let us consider
\[
X:= \{ (f, \chi) : f \in L^\infty(G), \| f \|_\infty \leq 1, \chi \in \widehat{G} \} \,.
\]
Then,
\[
\{ (\frac{1}{|A_i|} \int_{A_i} \chi(t) f(t) \dd t)_{(f, \chi) \in X} \} _{i \in I}
\]
is a net in $D^{X}$ where $D= \{ z \in \CC : |z| \leq 1 \}$. This is compact by Tychonoff's theorem. Therefore, this net has a convergent subnet $(y_j)_{j \in J}$.
This means that there exists a monotone final function $h : J \to I$ such that for all $j \in J$ we have
\[
y_j= \bigl(\frac{1}{|A_{h(j)}|} \int_{A_{h(j)}} \chi(t) f(t) \dd t\bigr)_{(f, \chi) \in X}
\]
Defining $B_j=A_{h(j)}$ for all $j \in J$ gives the claim for all $f \in L^\infty(G)$ with $\| f \|_\infty \leq 1$.
\end{proof}

\begin{lemma}[Universal van Hove net for reflected Eberlein convolution of functions] Let $G$ be a LCAG and $\{ A_i \}_{i \in I}$ a van Hove net. Then, there exists a subnet $ \{ B_j \}_{j \in J}$ of $A_i$ such that, for all $f, g \in L^\infty(G)$ reflected Eberlein convolution $\lb f, g \rb_{\cB}$ exists.
\end{lemma}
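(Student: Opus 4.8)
The plan is to repeat, essentially verbatim, the Tychonoff compactness argument from the preceding lemma on Fourier--Bohr coefficients, now applied to the three-parameter family of averages defining the reflected Eberlein convolution of functions. First I would reduce to the normalized case: since $\lb f,g\rb_{\cB}$ is linear in $f$ and conjugate-linear in $g$, any subnet that produces the relevant limits for all pairs with $\|f\|_\infty\le 1$ and $\|g\|_\infty\le 1$ also produces them for arbitrary $f,g\in L^\infty(G)$ by scaling.

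Next I would set
\[
X:=\{(f,g,t): f,g\in L^\infty(G),\ \|f\|_\infty\le 1,\ \|g\|_\infty\le 1,\ t\in G\}
\]
and let $D:=\{z\in\CC:|z|\le 1\}$. For every index $i$ and every $(f,g,t)\in X$ the integrand $s\mapsto f(s)\overline{g(s-t)}$ has modulus at most $1$ on $A_i$, so $\frac{1}{|A_i|}\int_{A_i} f(s)\overline{g(s-t)}\,\dd s\in D$. Hence $i\mapsto\bigl(\frac{1}{|A_i|}\int_{A_i} f(s)\overline{g(s-t)}\,\dd s\bigr)_{(f,g,t)\in X}$ is a net in the compact space $D^{X}$, which by Tychonoff's theorem admits a convergent subnet. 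This gives a monotone cofinal map $h:J\to I$; putting $B_j:=A_{h(j)}$ I obtain a subnet $\{B_j\}_{j\in J}$ of $\{A_i\}_{i\in I}$ along which $\lim_j\frac{1}{|B_j|}\int_{B_j} f(s)\overline{g(s-t)}\,\dd s$ exists for every $(f,g,t)\in X$.

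Finally I would unwind the definitions: existence of that limit for each fixed $t$ is precisely the existence of the mean $M_{\cB}(f\,\overline{\tau_t g})$ for all $t\in G$, which is exactly what it means for the reflected Eberlein convolution $\lb f,g\rb_{\cB}$ to exist; by the scaling reduction this holds for all $f,g\in L^\infty(G)$. There is no genuine obstacle here. The one point that must be noted is the uniform modulus bound on the integrand, which is what confines all of the averages to the single fixed compact set $D$ and thereby makes Tychonoff applicable; the argument is identical to that for Fourier--Bohr coefficients, merely with one additional coordinate $t\in G$ built into the index set $X$. (Combined afterwards with Lemma~\ref{lemma-relationsship}, this will also yield the corresponding statement for measures.)
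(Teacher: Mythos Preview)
Your proposal is correct and follows essentially the same approach as the paper: both reduce by (conjugate-)linearity to the unit ball of $L^\infty(G)$, encode all the averages $\frac{1}{|A_i|}\int_{A_i} f(s)\overline{g(s-t)}\,\dd s$ as a net in $D^{X}$ with $X=\{(f,g,t):\|f\|_\infty,\|g\|_\infty\le 1,\ t\in G\}$, and apply Tychonoff to extract a convergent subnet. Your write-up is in fact slightly more explicit than the paper's in spelling out the modulus bound, the cofinal map, and the unwinding of the definition of $\lb f,g\rb_{\cB}$.
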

\begin{proof} By linearity of the reflected Eberlein convolution in both arguments, it suffices to consider $f,g\in L^\infty (G)$ with $\|f\|_\infty, \|g\|_\infty \leq 1$.
Let
\[
X:= \{ (f, g, t) : f, g \in L^\infty(G), t \in G, \| f \|_\infty \leq 1,  \|g \|_\infty \leq 1 \} \,.
\]
Then,
\[
\{ (\frac{1}{|A_i|} \int_{A_i} f(s) \overline{g(s-t)} \dd s)_{(f, g, t) \in X} \} _{i \in I}
\]
is a net in $D^{X}$ where $D= \{ z \in \CC : |z| \leq 1 \}$. This is compact by Tychonoff's theorem. Therefore, this net has a convergent subnet $(y_j)_{j \in J}$.

Similarly to the above, this yields a subnet $\cB$ of $\cA$ such that for all $f,g \in L^\infty(G)$ with $\| f \|_{\infty} \leq 1, \| g \|_\infty \leq 1$ the reflected Eberlein convolution $\lb f,g \rb_{\cB}$ exists.
\end{proof}

\begin{lemma}[Universal van Hove net for reflected Eberlein convolution of measures] Let $G$ be a LCAG and $\{ A_i \}_{i \in I}$ a van Hove net. Then, there exists a subnet $ \{ B_j \}_{j \in J}$ of $A_i$ such that, for all $\mu, \nu \in \cM^\infty(G)$ the reflected Eberlein convolution $\lb \mu, \nu \rb_{\cB}$ exists.
\end{lemma}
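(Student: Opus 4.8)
The plan is to adapt the Tychonoff argument used in the two preceding lemmas; the only genuinely new point is that $\cM^\infty(G)$ is not a single vaguely compact set but an increasing union $\bigcup_{C>0}\cM_{C,V}(G)$. To handle this I would first fix once and for all a non-empty open relatively compact set $V=-V\subseteq G$ and reduce to a normalized situation: it suffices to produce a subnet $\cB$ along which $\lb\mu,\nu\rb_{\cB}$ exists for every $\mu,\nu\in\cM_{1,V}(G)$. Indeed, an arbitrary pair in $\cM^\infty(G)$ can be written as $\mu=\|\mu\|_V\,\mu'$, $\nu=\|\nu\|_V\,\nu'$ with $\mu',\nu'\in\cM_{1,V}(G)$ (the cases $\mu=0$ or $\nu=0$ being trivial), and by the homogeneity properties recorded in Lemma~\ref{thm:rebe props}(c),(d) the reflected Eberlein convolution scales as $\lb\mu,\nu\rb_{\cB}=\|\mu\|_V\|\nu\|_V\,\lb\mu',\nu'\rb_{\cB}$, so existence for normalized pairs forces existence in general.

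The key step is to invoke Corollary~\ref{cor5} (its ``in particular'' statement, with $C=1$): it furnishes an index $i_0\in I$ and a constant $\kappa>0$, \emph{independent of the measures}, such that $\frac{1}{|A_i|}\mu|_{A_i}\ast\widetilde{\nu|_{A_i}}\in\cM_{\kappa,V}(G)$ for all $i>i_0$ and all $\mu,\nu\in\cM_{1,V}(G)$. This is exactly what makes the rescaling useful: the Eberlein averages of \emph{all} normalized pairs eventually lie in one \emph{fixed} vaguely compact set $\cM_{\kappa,V}(G)$. I would then form the product $Y:=\prod_{(\mu,\nu)}\cM_{\kappa,V}(G)$ over all pairs $(\mu,\nu)$ with $\mu,\nu\in\cM_{1,V}(G)$, which is compact by Tychonoff's theorem, and observe that $i\mapsto\bigl(\tfrac{1}{|A_i|}\mu|_{A_i}\ast\widetilde{\nu|_{A_i}}\bigr)_{(\mu,\nu)}$ is eventually (for $i>i_0$) a net in $Y$, hence has a subnet converging in $Y$; as in the two preceding lemmas this yields a monotone cofinal map $h:J\to I$, and we set $B_j:=A_{h(j)}$.

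Finally, convergence in the product topology of $Y$ is precisely coordinatewise vague convergence, so for every $\mu,\nu\in\cM_{1,V}(G)$ the net $\frac{1}{|B_j|}\mu|_{B_j}\ast\widetilde{\nu|_{B_j}}$ converges vaguely, i.e.\ $\lb\mu,\nu\rb_{\cB}$ exists; the reduction of the first paragraph then gives the claim for all $\mu,\nu\in\cM^\infty(G)$. I do not expect a serious obstacle: the delicate ingredient, namely a bound on the Eberlein averages that is uniform over \emph{all} translation bounded measures of a given $V$-norm, is already packaged in Corollary~\ref{cor5}. The only points requiring a little care are the homogeneity bookkeeping (noting that the scalars $\|\mu\|_V,\|\nu\|_V$ are real and non-negative, so the conjugation in $\widetilde{\;\cdot\;}$ does no harm) and the routine check—carried out exactly as in the previous two lemmas—that the subnet extracted from $Y$ is an honest subnet of the given van Hove net.
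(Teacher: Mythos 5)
Your argument is correct, but it is not the proof the paper actually gives. The paper disposes of this lemma in one line: having already produced a subnet $\cB$ along which $\lb f,g\rb_{\cB}$ exists for \emph{all} $f,g\in L^\infty(G)$, it applies this to the functions $\mu\ast\varphi,\nu\ast\psi\in\Cu(G)$ and invokes the equivalence (i)$\Longleftrightarrow$(ii) of Lemma~\ref{lemma-relationsship}, which says that existence of $\lb\mu\ast\varphi,\nu\ast\psi\rb_{\cB}$ for all $\varphi,\psi\in\Cc(G)$ is the same as existence of $\lb\mu,\nu\rb_{\cB}$; no new compactness input is needed (one only notes that a subnet of a van Hove net is again a van Hove net, so the cited lemma applies to $\cB$). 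You instead carry out, at the level of measures, the Tychonoff extraction that the paper only sketches as an alternative (``could be shown similarly to the preceding two lemmas''), and you correctly identify and fix the one new difficulty of that route: $\cM^\infty(G)$ is not vaguely compact, so you normalize to $\cM_{1,V}(G)$ via the scaling behaviour from Lemma~\ref{thm:rebe props}(c),(d) (with non-negative real scalars, so conjugation is harmless) and use the uniform bound of Corollary~\ref{cor5} to confine all the averages, for $i$ beyond a fixed $i_0$, to the single compact set $\cM_{\kappa,V}(G)$, after which the product-space subnet argument goes through verbatim. What each approach buys: the paper's reduction is shorter and reuses the function-level lemma wholesale, at the price of routing through Lemma~\ref{lemma-relationsship}; your version is self-contained at the level of measures and makes explicit exactly where the uniform $\cM_{\kappa,V}$ bound enters, which is a legitimate and complete alternative.
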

\begin{proof} This could be shown similarly to the arguments given in the proofs of  the proceeding two lemmas. However, it is also a direct consequence of the preceding lemma and our discussion of the Eberlein convolution in Lemma \ref{lemma-relationsship}.
\end{proof}

Applying the three results in succession, we get:

\begin{theorem}[Existence of universal van Hove net for Fourier--Bohr coefficients]\label{thm:univ}
Every LCAG admits a van Hove net with the following properties:
\begin{itemize}
  \item[(a)] For all $f \in L^\infty(G)$ the Fourier--Bohr coefficient $a_{\chi}^\cA(f)$ exists.
  \item[(b)] For all $f,g \in L^\infty(G)$,  $\lb f,g \rb_{\cA}$ exists.
  \item[(c)] For all $\mu,\nu \in \cM^\infty(G)$,  $\lb \mu,\nu \rb_{\cA}$ exists.
  \item[(d)] For all $\mu \in \cM^\infty(G)$,  the autocorrelation $\gamma = \lb \mu ,\mu \rb_{\cA}$ exists with respect to $\cA$.
\end{itemize}
Furthermore, any van Hove net has a subnet with these properties.

\qed
\end{theorem}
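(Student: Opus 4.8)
The plan is to chain the three lemmas of this section, using the elementary fact that a subnet of a convergent net converges to the same limit, so that passing to a subnet never destroys a previously secured existence statement.

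First I would invoke \cite[Prop.~5.10]{PRS} to fix an arbitrary van Hove net $\cA_0$ on $G$; any van Hove net serves as a starting point, which is exactly what yields the ``furthermore'' clause. Applying the universal van Hove net lemma for Fourier--Bohr coefficients to $\cA_0$ gives a subnet $\cA_1$ of $\cA_0$ along which $a_\chi^{\cA_1}(f)$ exists for every $f\in L^\infty(G)$ and every $\chi\in\widehat{G}$, so $\cA_1$ satisfies (a). Next I would apply the universal van Hove net lemma for the reflected Eberlein convolution of functions to $\cA_1$, producing a subnet $\cA_2$ of $\cA_1$ such that $\lb f,g\rb_{\cA_2}$ exists for all $f,g\in L^\infty(G)$. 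Since $\cA_2$ is also a subnet of $\cA_1$, every Fourier--Bohr limit that converged along $\cA_1$ still converges along $\cA_2$ to the same value; hence $\cA_2$ satisfies both (a) and (b). Applying now the universal van Hove net lemma for the reflected Eberlein convolution of measures to $\cA_2$ gives a subnet $\cA$ of $\cA_2$ along which $\lb\mu,\nu\rb_{\cA}$ exists for all $\mu,\nu\in\cM^\infty(G)$; by the same subnet argument $\cA$ still satisfies (a) and (b), hence all of (a), (b), (c). Finally (d) is the special case $\mu=\nu$ of (c). Because $\cA$ is a subnet of the arbitrarily chosen $\cA_0$, this proves the ``furthermore'' clause, and combined with the existence of at least one van Hove net it also proves the first assertion.

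There is essentially no obstacle left at this stage: the substantive work — the Tychonoff compactness argument extracting a convergent subnet over the relevant parameter space, and, in the measure case, the reduction to functions via Lemma~\ref{lemma-relationsship} — has already been carried out inside the three lemmas. The only points needing (minimal) care are transitivity of the subnet relation and stability of limits under refinement, which together let the three extractions be chained without losing earlier properties, and the remark (already exploited inside the lemmas) that linearity of $a_\chi$ and of $\lb\cdot,\cdot\rb_\cA$ in each argument reduces the uncountable families at each step to the unit-ball case handled by compactness.
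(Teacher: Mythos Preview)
Your proposal is correct and matches the paper's own proof exactly: the paper simply says ``Applying the three results in succession, we get'' and marks the theorem with \qed. Your explicit discussion of subnet transitivity and stability of limits under refinement just spells out what the paper leaves implicit.
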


\begin{definition}We will refer to any net satisfying Theorem~\ref{thm:univ} as an \emph{universal van Hove net}.
\end{definition}

For such nets, \cite{LSS3} gives:

\begin{corollary} Let $\cA$ be an universal van Hove net in $G$. Then,
\begin{itemize}
  \item[(a)] $\lb \, ,  \, \rb_{\cA}$ is a mapping from $L^\infty(G) \times L^\infty(G)$ into $WAP(G)$.
  \item[(b)] $\lb \, ,  \, \rb_{\cA}$ is a mapping from $\cM^\infty(G) \times \cM^\infty(G)$ into $\WAP(G)$.
\end{itemize}\qed
\end{corollary}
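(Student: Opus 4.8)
The plan is to reduce both parts to one classical input plus routine approximation arguments. The input is that every continuous positive definite function on $G$ is weakly almost periodic; equivalently, writing $B(G)$ for the algebra of functions of the form $\widecheck{\sigma}$ with $\sigma$ a finite (complex) measure on $\widehat G$, one has $B(G)\subseteq\WAP(G)$ (see, e.g., \cite{MoSt}). I will also use repeatedly that $\WAP(G)$ is a uniformly closed translation invariant subspace of $\Cu(G)$, and that $h\ast\widetilde{\psi_\alpha}\to h$ uniformly for $h\in\Cu(G)$ whenever $(\psi_\alpha)\subseteq\Cc(G)$ is an approximate identity with $\widetilde{\psi_\alpha}=\psi_\alpha$. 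From these I first record an auxiliary statement: \emph{a positive definite measure $\gamma$ is a weakly almost periodic measure}, i.e.\ $\gamma\ast\varphi\in\WAP(G)$ for all $\varphi\in\Cc(G)$. Indeed $\gamma\ast\eta\ast\widetilde\eta$ is a continuous positive definite function for all $\eta\in\Cc(G)$, hence lies in $B(G)\subseteq\WAP(G)$; polarisation gives $\gamma\ast\varphi\ast\widetilde\psi\in\WAP(G)$ for all $\varphi,\psi\in\Cc(G)$; fixing $\varphi$ and letting $\psi=\psi_\alpha$ yields $\gamma\ast\varphi\ast\widetilde{\psi_\alpha}\to\gamma\ast\varphi$ uniformly with limit in $\Cu(G)$, so $\gamma\ast\varphi\in\WAP(G)$ by uniform closedness.

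For \textbf{(b)}, since $\cA$ is a universal van Hove net the reflected Eberlein convolution $\lb\mu,\nu\rb_\cA$ exists for all $\mu,\nu\in\cM^\infty(G)$, and by Lemma~\ref{thm:rebe props}(d) together with the definition of the autocorrelation, polarisation gives
\[
\lb\mu,\nu\rb_\cA \;=\; \frac14\sum_{k=0}^{3} i^{k}\,\lb\mu+i^{k}\nu,\ \mu+i^{k}\nu\rb_\cA \;=\; \frac14\sum_{k=0}^{3} i^{k}\,\gamma_{\mu+i^{k}\nu}\,.
\]
Each $\gamma_{\mu+i^{k}\nu}$ is positive definite by Lemma~\ref{thm:rebe props}(e), hence a weakly almost periodic measure by the auxiliary statement; since these form a vector space, $\lb\mu,\nu\rb_\cA$ is a weakly almost periodic measure.

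For \textbf{(a)}, consider first $f,g\in\Cu(G)$. Universality of $\cA$ ensures $\lb f',g'\rb_\cA$ exists for all $f',g'\in F:=\{f,g\}$, so the construction of Section~\ref{section-construction} produces a strongly continuous unitary representation $T=T^{F,\cA}$ on $\cH_F$ for which, by Lemma~\ref{lemma2}(c) and Stone's theorem,
\[
\lb f,g\rb_\cA(t) \;=\; M_\cA\!\bigl(f\,\overline{\tau_t g}\bigr) \;=\; \langle f,T_t g\rangle \;=\; \widecheck{\varrho_{f,g}}(t)\qquad(t\in G),
\]
where $\varrho_{f,g}(B):=\langle f,E(B)g\rangle=\tfrac14\sum_{k=0}^{3}i^{k}\varrho_{f+i^{k}g}(B)$. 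A Cauchy--Schwarz estimate over measurable partitions of $\widehat G$ (or finiteness of each $\varrho_{f+i^{k}g}$ via the polarisation formula) shows $\varrho_{f,g}$ has finite total variation, at most $\|f\|\,\|g\|$; hence $\lb f,g\rb_\cA=\widecheck{\varrho_{f,g}}\in B(G)\subseteq\WAP(G)$. For general $f,g\in L^\infty(G)$ one convolves: $f\ast\varphi,\ g\ast\psi\in\Cu(G)$ for $\varphi,\psi\in\Cc(G)$, and the function analogue of Lemma~\ref{lemma-relationsship} gives $\lb f,g\rb_\cA\ast\varphi\ast\widetilde\psi=\lb f\ast\varphi,\ g\ast\psi\rb_\cA\in\WAP(G)$ by the case just treated; an approximate identity argument as above then finishes the proof, the only delicate point being the continuity of $\lb f,g\rb_\cA$ itself, which always holds for $f,g\in\Cu(G)$ by the equicontinuity exploited in Lemma~\ref{lem4}.

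The crux is the classical input $B(G)\subseteq\WAP(G)$ --- continuous positive definite functions are weakly almost periodic --- and the ensuing implication ``positive definite measure $\Rightarrow$ weakly almost periodic measure''. Everything else is bookkeeping: polarisation into autocorrelations in (b) and into spectral measures in (a), the elementary total-variation bound for $\varrho_{f,g}$, and the uniform approximate-identity estimates. The one place where a little care is genuinely needed is the continuity of $\lb f,g\rb_\cA$ in the $L^\infty$ form of (a), which is precisely why passing first to $\lb f\ast\varphi,\ g\ast\psi\rb_\cA\in\WAP(G)$ (with arguments in $\Cu(G)$) is the convenient route.
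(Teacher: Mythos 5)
The paper itself gives no argument here — the corollary is simply quoted from \cite{LSS3} — so there is no internal proof to compare yours with; judged on its own merits, your part (b) and the $\Cu(G)$-case of part (a) are correct and pleasantly self-contained. For (b), polarising $\lb \mu,\nu\rb_{\cA}$ into the four autocorrelations $\gamma_{\mu+i^k\nu}$ (legitimate along a universal net, using Lemma~\ref{thm:rebe props}), invoking positive definiteness, and then using that a positive definite (hence translation bounded) measure is a weakly almost periodic measure — continuous positive definite functions lie in $B(G)\subseteq WAP(G)$, plus polarisation in the test function and an approximate identity together with uniform closedness of $WAP(G)$ — is sound. For $f,g\in\Cu(G)$, the identification $\lb f,g\rb_{\cA}=\widecheck{\varrho_{f,g}}$ with $\varrho_{f,g}=\frac{1}{4}\sum_{k=0}^{3} i^k\varrho_{f+i^kg}$ a finite complex measure is also correct, and it is a genuinely nice use of the representation constructed in Section~\ref{section-construction}.

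The gap is the passage in (a) from $\Cu(G)\times\Cu(G)$ to $L^\infty(G)\times L^\infty(G)$. Your approximate-identity step requires $\lb f,g\rb_{\cA}$ to be (uniformly) continuous, and you justify continuity only ``for $f,g\in\Cu(G)$ by the equicontinuity exploited in Lemma~\ref{lem4}'' — but that equicontinuity uses the uniform continuity of $g$ and is unavailable for merely bounded measurable $f,g$. Moreover the step cannot be repaired: take $G=\RR$, $g(s)=e^{i2^n s}$ for $s\in[n,n+1)$, $n\geq 0$, $g=0$ on $(-\infty,0)$, and a universal subnet $\cA$ of $A_N=[0,N]$. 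Then $\lb g,g\rb_{\cA}(0)=1$, while for Lebesgue-almost every small $t>0$ (ergodicity of the doubling map gives $\frac{1}{N}\sum_{n<N}e^{i2^nt}\to 0$) one finds $|\lb g,g\rb_{\cA}(t)|\leq t$; so the reflected Eberlein convolution of two $L^\infty$ functions can be discontinuous at $0$, hence is not a weakly almost periodic function. What your convolution-plus-approximate-identity argument actually proves in the $L^\infty$ case is that every mollification $\lb f,g\rb_{\cA}\ast\varphi\ast\widetilde{\psi}$ lies in $WAP(G)$, i.e.\ that the absolutely continuous measure with density $\lb f,g\rb_{\cA}$ — equivalently $\lb f\theta_G, g\theta_G\rb_{\cA}$ — is a weakly almost periodic measure; that is the measure-level statement (b), not the function-level claim. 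So either restrict (a) to $\Cu(G)\times\Cu(G)$ (where your argument stands), or interpret (a) at the level of the associated measures; as a proof of the statement literally read for $L^\infty(G)\times L^\infty(G)$ into $WAP(G)$, your final step does not close, and the continuity it needs can genuinely fail.
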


Let us conclude by pointing out that, when working with an universal van Hove net, all the assumptions on
the existence of the reflected Eberlein convolution throughout Section~\ref{section-construction} can be dropped.
While one could write the results that way, in general one works with an explicit van Hove sequence or net, and we expect that
in general an universal van Hove net cannot be constructed explicitly.

\end{appendix}

 \subsection*{Acknowledgments} This work was started when NS visited Jena University, and he would like to thank the Mathematics Department for hospitality.
NS was supported by the Natural Sciences and Engineering Council of Canada via grant 2020-00038, and he is grateful for the support.

\end{document}